\newcommand{\R}{\mathbb{R}}
\newcommand{\C}{\mathbb{C}}
\newcommand{\Q}{\mathbb{Q}}
\newcommand{\Z}{\mathbb{Z}}
\newcommand{\F}{\mathbb{F}}
\newcommand{\A}{\mathcal{A}}
\newcommand{\B}{\mathcal{B}}
\newcommand{\CC}{\mathcal{C}}
\renewcommand{\SS}{\mathcal{S}}
\newcommand{\PP}{\mathbb{P}}
\newcommand\pprec{\prec\mkern-5mu\prec}
\renewcommand{\d}{\mathfrak{d}}
\newcommand{\D}{\mathcal{W}}
\newcommand{\DD}{\mathcal{D}}
\newcommand{\U}{\mathcal{U}}
\renewcommand{\P}{\mathcal{P}}
\renewcommand{\epsilon}{\varepsilon}
\theoremstyle{plain}
\newtheorem{theorem}{Theorem}
\newtheorem{lemma}[theorem]{Lemma}
\newtheorem{prop}[theorem]{Proposition}
\newtheorem{cor}[theorem]{Corollary}
\newtheorem{definition}{Definition}
\newtheorem{ex}{Example}
\theoremstyle{remark}
\newtheorem{remark}{Remark}
\numberwithin{equation}{section}
\begin{document}
\title[Outside primes of elliptic curves]{On totally split primes in high-degree torsion fields of elliptic curves}

\author{Jori Merikoski}

\address{Mathematical Institute,
University of Oxford,
Andrew Wiles Building,
Radcliffe Observatory Quarter,
Woodstock Road,
Oxford,
OX2 6GG}
\email{jori.merikoski@maths.ox.ac.uk}

\begin{abstract} Analogously to primes in arithmetic progressions to large moduli, we can study primes that are totally split in extensions of $\Q$ of high degree. Motivated by a question of Kowalski we focus on the extensions $\Q(E[d])$ obtained by adjoining the coordinates of $d$-torsion points of a non-CM elliptic curve $E/\Q$.  We show that for almost all integers $d$ there exists a non-CM elliptic curve $E/\Q$ and a prime $p<|\text{Gal}(\Q(E[d])/\Q)|= d^{4-o(1)}$ which is totally split in $\Q(E[d])$. Note that  such a prime $p$ is not accounted for by the expected main term in the Chebotarev Density Theorem. Furthermore, we prove that for almost all $d$ that factorize suitably there exists a non-CM elliptic curve $E/\Q$ and a prime $p$ with $p^{0.2694} < d$ which is totally split in $\Q(E[d])$. 

To show this we use work of Kowalski to relate the question to the distribution of primes in certain residue classes modulo $d^2$. Hence, the barrier $p < d^4$ is related to the limit in the classical Bombieri-Vinogradov Theorem. To break past this we make use of the assumption that $d$ factorizes conveniently, similarly as in the works on primes in arithmetic progression to large moduli by Bombieri, Friedlander, Fouvry, and Iwaniec, and in the more recent works of Zhang, Polymath, and the author. In contrast to these works we do not require any of the deep exponential sum bounds (i.e. sums of Kloosterman sums or Weil/Deligne bound). Instead, we only require the classical large sieve for multiplicative characters and we apply Harman's sieve method to obtain a combinatorial decomposition for primes.
\end{abstract}

\maketitle
\tableofcontents

\section{Introduction}
One of the main topics in modern analytic number theory is the study of primes in arithmetic progressions to large moduli. By analogy, we can investigate primes that are totally split in some extensions $K/\Q$ of high degree. To construct such extensions with additional structure we adjoin to $\Q$ torsion points of elliptic curves $E/\Q$. Given such an elliptic curve and an integer $d\geq 1$, we let $\Q(E[d])$ denote the finite Galois extension of $\Q$ obtained by adjoining the coordinates of $E[d](\bar{\Q})$, the $d$-torsion points of $E(\bar{\Q})$ (i.e. $\Q(E[d])$ is the smallest extension $L/\Q$ such that $E[d](\bar{\Q}) \subseteq E(L)$). Inspired by Kowalski's work \cite{kowalski} we study primes $p$ which are totally split in $\Q(E[d])$ for some $E$, where $d$ is very large in terms of $p$.

To motivate the statement of our results recall that by the Chebotarev Density Theorem for a fixed $d$ we have
\begin{align} \label{chebotarev}
\sum_{\substack{p \leq X \\ p \, \text{totally split in} \, \Q(E[d])}} 1 \,\sim \,\frac{1}{|G_d|} \text{li}(X),
\end{align}
where $G_d$ is the Galois group of $\Q(E[d])$ over $\Q$. By \cite[Theorem 2.1]{kowalski} (originally due to Deuring and Serre) we know that $G_d$ is of size $d^{g-o(1)}$, where $g=2$ if $E$ has complex multiplication and $g=4$ for a non-CM curve $E$ (recall that $E$ is said to have complex multiplication (CM) if its endomorphism ring $\text{End}(E) \supseteq \Z$ is strictly larger than $\Z$). The asymptotic (\ref{chebotarev}) is conjectured to hold uniformly up to $d \leq X^{1/g-\epsilon}$, and assuming GRH for Artin $L$-functions the asymptotic is known to hold up to $d \leq X^{1/2g-\epsilon}$ \cite[Proposition 3.6]{kowalski}. 

The analogy with primes in arithmetic progressions to large moduli is most evident from considering the cyclotomic extensions $\Q(\zeta_d)$, where $\zeta_d$ denotes a primitive $d$-th root of unity. Then prime $p$ is totally split in $\Q(\zeta_d)$ if and only if $p\equiv 1 \, (\text{mod} \, d)$. Therefore, in this case the Chebotarev Density Theorem reduces to the Prime Number Theorem in arithmetic progressions 
\begin{align*}
\sum_{\substack{p \leq X \\ p \, \text{totally split in} \, \Q(\zeta_d)}} 1 = \sum_{\substack{p \leq X \\ p\equiv 1 \, (\text{mod} \, d) }} 1= \pi(X;d,1) \sim \frac{1}{\varphi(d)} \text{li}(X).
\end{align*}
Note that if a prime $p$ is totally split in $\Q(E[d])$, then $p \equiv 1 \,\, (d)$ \cite[Corollary 6.12]{kowalski}.

Compared to primes in arithmetic progressions, a new phenomenon occurs in the case of totally split primes in $\Q(E[d])$ for a non-CM elliptic curve $E$. For $X < |G_d|=d^{4-o(1)}$ the expected main term in (\ref{chebotarev}) is $<1$. However, if we vary $d$ or the elliptic curve $E$ we still expect to find some non-CM curves such that the sum is non-zero, even up to $X=d^{2+o(1)}$ (cf. \cite[Example 3.16]{kowalski} for a connection to the conjecture that there are infinitely many primes of the form $n^2+1$). This is in contrast to the classical case where $\pi(X;d,1) =0$ as soon as $d > X$. Similarly as in \cite[Section 3.4]{kowalski}, we make the following definition.

\begin{definition}
We say that a prime $p$ is an outside prime of a non-CM curve $E/\Q$ if for some $d$ it is totally split in $\Q(E[d])$ and $p<|G_d|$.
\end{definition}

\begin{ex}\emph{(cf. \cite[Section 7.2]{kowalski})}. Consider the curve
\begin{align*}
E:  \quad y^2 = x^3 + 6x -2.
\end{align*}
The smallest outside prime for this curve is $p=196561$, which occurs for $d=140.$ The outside primes for a fixed curve form a very sparse set.  There are only $10$ outside primes of $E$ below $300,000,000$.
\end{ex}

To study the outside primes we consider the problem for almost all $d$ with varying $E$ (for a fixed non-CM curve $E$ the problem appears extremely difficult). We will later give quantitative versions of our main results once we have defined the necessary notations. For now we can state the following theorem, which is a corollary of our Theorem \ref{almostallmodulitheorem}.
\begin{theorem} \label{leasttheorem} Let $D_1$ be sufficiently large and let $\epsilon>0$ be small. For all but  $O(\sqrt{\epsilon}D_1)$ of integers $d \in [D_1,2D_1]$ there exists a prime $p$ with $p < d^{4-\epsilon}$ and a non-CM elliptic curve $E$ such that $p$ is totally split in $\Q(E[d])$.  That is, $p< |G_d|^{1-\epsilon/4 + o(1)}$ so that such a $p$ is an outside prime of $E$. 
\end{theorem}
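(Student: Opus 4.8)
The key structural input is Kowalski's reduction (referenced in the abstract and Section 1): a prime $p$ being totally split in $\Q(E[d])$ is controlled by the splitting behavior of $p$ together with congruence conditions modulo $d^2$ coming from the Galois representation attached to $E$. Concretely, I would first recall (from the "work of Kowalski" the paper invokes) that if $p \equiv 1 \pmod d$ and $p$ can be written in the form $p = a^2 + b^2\cdot(\text{something})$ reflecting that the Frobenius at $p$ acts as a scalar on $E[d]$, then one can choose an elliptic curve $E$ (via a congruence/CRT construction on the $j$-invariant or directly on a Weierstrass model) whose mod-$d^2$ representation makes $p$ split completely. So the real task becomes: for almost all $d \in [D,2D]$, produce a prime $p < d^{4-\epsilon}$ lying in a prescribed nonempty set of residue classes modulo $d^2$ (the classes $a \pmod{d^2}$ with $a \equiv 1 \pmod d$ and $a$ corresponding to a valid Frobenius trace/scalar action, roughly $a_p \equiv 2 \pmod{d}$ type conditions making the reduction split).

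**The counting step via Bombieri–Vinogradov.** Since $p < d^{4-\epsilon}$ means $d^2 < p^{1-\epsilon/2+o(1)} < p^{1/2+o(1)}$ is false—wait, $d^2 < (p^{1/(4-\epsilon)})^2 = p^{2/(4-\epsilon)} = p^{1/2 + \epsilon/(8)+o(1)}$, so the modulus $d^2$ is just barely past $\sqrt{p}$, right at the Bombieri–Vinogradov threshold. The plan is: set $X = D^{4-\epsilon}$, and for each $d \in [D,2D]$ ask whether $\pi(X; d^2, a) > 0$ for the relevant residue $a$. On average over $d$, the main term is of size $\mathrm{li}(X)/\varphi(d^2) \approx X/(D^2 \log X)$, which is $\gg D^{2-\epsilon}/\log D \to \infty$. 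By Bombieri–Vinogradov applied to moduli of the shape $q = d^2$ with $d \in [D,2D]$ (so $q$ ranges over a sparse set of size $D$ among integers up to $4D^2 \approx X^{1/2+\epsilon/4}$), the sum over $d$ of the error terms $|\pi(X;d^2,a) - \mathrm{li}(X)/\varphi(d^2)|$ is $\ll X (\log X)^{-A}$; since there are $\asymp D$ moduli and each expected main term is $\gg D^{2-\epsilon}$, the number of $d$ for which the error exceeds half the main term is $\ll X(\log X)^{-A}/D^{2-\epsilon} = D^{2+\epsilon}(\log D)^{-A}$, which is $o(D)$—in fact a power saving would give the clean $O(\sqrt\epsilon)$ proportion after optimizing. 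One must check that restricting Bombieri–Vinogradov to square moduli $d^2$ is legitimate: indeed $\sum_{d \le 2D} \max_{(a,d^2)=1}|\pi(X;d^2,a) - \mathrm{li}(X)/\varphi(d^2)| \le \sum_{q \le 4D^2} \max \ldots \ll X(\log X)^{-A}$ provided $4D^2 \le X^{1/2}(\log X)^{-B}$, i.e. $D \le X^{1/4}(\log X)^{-B/2}$, which holds since $X = D^{4-\epsilon}$ gives $X^{1/4} = D^{1-\epsilon/4} \cdot D^{\epsilon/4}$—hmm, $X^{1/4} = D^{1-\epsilon/4}$, which is \emph{smaller} than $D$. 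So plain Bombieri–Vinogradov does \emph{not} directly reach moduli $d^2 \approx D^2$ against $X = D^{4-\epsilon}$; we need $X$ slightly larger, $X = D^{4+\epsilon}$, which corresponds to $p < d^{4+\epsilon}$—but the theorem claims $p < d^{4-\epsilon}$, i.e. genuinely \emph{inside} the Chebotarev range minus a bit. Let me re-examine: the theorem wants $p < d^{4-\epsilon}$, i.e. $p$ small, $X$ small, modulus $d^2$ relatively \emph{large} compared to $\sqrt X$. We have $\sqrt X = D^{2 - \epsilon/2}$ and modulus is $D^2$, so modulus $= \sqrt{X} \cdot D^{\epsilon/2}$, slightly \emph{beyond} Bombieri–Vinogradov. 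This is exactly why the abstract says "the barrier $p < d^4$ is related to the limit in the classical Bombieri–Vinogradov Theorem" and why one needs the factorization hypothesis in the \emph{second} main theorem but not here—so for Theorem~\ref{leasttheorem} there must be extra room: either the exceptional-zero/average savings let one push slightly past $\sqrt X$ for a positive proportion of moduli (this is classical—e.g. one can afford moduli up to $X^{1/2+\delta}$ if one only wants the bound for \emph{most} moduli, not all, by a standard argument exploiting that there are few moduli near the top), or one allows $\epsilon$ to absorb the loss. I would use the observation that Bombieri–Vinogradov, or rather its proof via the large sieve (which the abstract stresses is all that's needed), gives for $Q^2 \le X^{1+\epsilon}$ a bound $\sum_{q \le Q} |\Delta(X;q,a)|^2 \ll \ldots$ strong enough that the set of bad $d \in [D,2D]$ has size $O(\sqrt\epsilon \cdot D)$ when $D = X^{1/(4-\epsilon)}$; this is the quantitative heart and I'd get it from Theorem~\ref{almostallmodulitheorem} as the excerpt instructs ("which is a corollary of our Theorem~\ref{almostallmodulitheorem}").

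**The main obstacle.** The genuinely delicate point is \emph{not} the analytic estimate but the reduction: showing that for the relevant $d$ and a prime $p$ in the right class mod $d^2$, there \emph{exists} a non-CM elliptic curve $E/\Q$ with $p$ totally split in $\Q(E[d])$. One must (i) identify precisely which congruence class mod $d^2$ the prime must lie in—this comes from requiring that the image of Frobenius at $p$ in $\mathrm{GL}_2(\Z/d)$ be trivial, which by Kowalski forces $p \equiv 1 \pmod d$ and $a_p \equiv 2 \pmod d$ and, lifting to $\Z/d^2$, a condition on $a_p \bmod d^2$ and $p \bmod d^2$ linked by $a_p^2 - 4p$ being divisible by a high power of $d$ (the "$p = $ norm from an order of conductor $d$" picture, cf. the $n^2+1$ remark in Example~3.16); (ii) invoke Deuring's theorem / a CRT argument over the moduli dividing $d^2$ to build a curve with the prescribed trace $a_p$ and prescribed mod-$d^2$ torsion structure, while \emph{avoiding} the finitely many CM $j$-invariants so that $E$ is non-CM and $|G_d| = d^{4-o(1)}$ genuinely holds. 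Managing this for \emph{almost all} $d$ (excluding a sparse bad set where, e.g., $d$ has too many small prime factors, or where the relevant residue class is empty for parity/congruence obstructions) is where the "factorize suitably" phrasing and the $O(\sqrt\epsilon)$ exceptional proportion come from. I expect the write-up to: fix the residue class $a \bmod d^2$ explicitly; cite Kowalski for the curve-construction step (so this is black-boxed); reduce to $\pi(D^{4-\epsilon}; d^2, a) > 0$; and close by quoting the quantitative Theorem~\ref{almostallmodulitheorem}, whose proof in turn runs Harman's sieve plus the large sieve for multiplicative characters to count primes in that class for almost all $d$.
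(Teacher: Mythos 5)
Your top-level skeleton — Kowalski's criterion (a prime $p$ is totally split in $\Q(E[d])$ for some $E$ iff there is a trace $a$ with $|a|<2\sqrt{p}$, $a\equiv 2\ (d)$, $p\equiv a-1\ (d^2)$), then a prime-counting statement for almost all $d$ quoted from Theorem~\ref{almostallmodulitheorem} — is indeed how the paper derives Theorem~\ref{leasttheorem}. But the reasoning you supply around that skeleton contains two substantive errors. First, you claim the factorization hypothesis is needed ``in the second main theorem but not here,'' and you attribute the $O(\sqrt{\epsilon})$ exceptional set partly to obstructions in the curve-construction step. In fact Kowalski's Corollary 6.12 is an equivalence valid for every $d$ and every prime $p$, so no $d$ is lost there; the exceptional set comes entirely from the factorization requirement: the proof restricts to $d$ having a prime factor $q\in[X^{\delta^2},X^{\delta})$ with $\delta=\sqrt{2\epsilon}$ (the set $\SS_X(\delta)$), and the proportion of $d\in[D,2D]$ lacking such a factor is $\ll\delta\ll\sqrt{\epsilon}$ by a standard sieve bound. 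Second, your proposed mechanisms for handling the modulus $d^2\approx p^{1/2+\epsilon/2}$ do not work: there is no ``classical'' almost-all-moduli form of Bombieri--Vinogradov reaching $X^{1/2+\delta}$ for a fixed (or sparse family of) residue class(es) — that is essentially the hard open territory this paper is navigating — and ``letting $\epsilon$ absorb the loss'' would put $p\gtrsim d^4>|G_d|^{1+o(1)}$, destroying the outside-prime content of the statement. What actually makes the range accessible is the structural feature you never exploit: for each $d$ one averages over the $\asymp\sqrt{X}/d$ admissible traces $a\equiv 2\ (d)$, and combined with the factorization $d=rq$ this lets the dispersion method run on only the large sieve for multiplicative characters, fed through Harman's sieve (Sections 3--5); that is precisely the content of Theorem~\ref{almostallmodulitheorem}, so your argument is correct only insofar as it defers to that theorem, while the independent justifications you offer for the analytic step would fail.

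Even granting the citation of Theorem~\ref{almostallmodulitheorem} (which matches the paper's own one-line derivation), the corollary still requires the short verifications you only gesture at: (i) the main term there is $\asymp X^{3/2}/(d\varphi(d^2)\log X)$, so for small $\epsilon$ it dominates the $O(\sqrt{\epsilon}\,X^{3/2}/(d\varphi(d^2)\log X))$ error and the count of primes $p\in(X,2X]$ with $d=d_1(p,E)$ for some $E/\F_p$ is positive; (ii) such an $E/\F_p$ must be lifted to a non-CM curve over $\Q$ with the same reduction (the paper shifts the defining equation by multiples of $p$), rather than ``avoiding CM $j$-invariants'' in the construction; and (iii) for $d\geq X^{1/4+\epsilon}$ one has $d^{4-\epsilon}\geq X^{1+15\epsilon/4-\epsilon^2}>2X\geq p$, giving $p<d^{4-\epsilon}<|G_d|^{1-\epsilon/4+o(1)}$. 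These are routine, but they, together with the correct source of the $O(\sqrt{\epsilon})$ density, are what a complete proof of Theorem~\ref{leasttheorem} needs.
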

\begin{remark}
Assuming GRH for Artin $L$-functions we have (\ref{chebotarev}) in the form
\begin{align*}
\sum_{\substack{p \leq X \\ p \, \text{totally split in} \, \Q(E[d])}} 1 \,= \,\frac{1}{|G_d|} \text{li}(X) + O(\sqrt{X} \log (d N_E X)),
\end{align*}
where the $O$-constant is absolute  (cf. \cite[Proposition 3.6]{kowalski}). Note that the error term depends  on $N_E$, the conductor of $E$. From the proof we will see that the curve $E$ in Theorem \ref{leasttheorem} is a lift of some $E/\F_p$, so that we have the trivial bound $N_E \ll p^3 \leq X^3$, using the fact that $N_E$ divides the minimal discriminant. Hence, the existence of outside primes given by Theorem \ref{leasttheorem} is not an artefact of elliptic curves with an exponentially large conductor $N_E$, which would make the secondary term $O(\sqrt{X} \log (d N_E X))$ possibly very large.
\end{remark}
To prove Theorem \ref{leasttheorem} we will have to assume that $d$ has a suitable factorization $d=rq$ for some prime $q$ which is not too small. Due to this the density of the exceptional set of $d$ is $O(\sqrt{\epsilon})$. By computing the dependence on $\epsilon$ explicitly we get a result also for larger $d$. To state it, for any $\alpha \in [1/4,1/2)$ and for any small $\eta>0$ define the set of well-factorable moduli
\begin{align*}
\D_\eta(\alpha,D_1):= \{ d \in [D_1,2D_1] : \, d=rq, \, q \in \PP, \, r \in [D_1^{1/(2\alpha)-1-2\eta},D_1^{1/(2\alpha)-1-\eta}]\}.
\end{align*}
Note that this set contains a positive proportion of all integers in $[D_1,2D_1]$. Requiring a suitable factorization is motivated by developments on primes in arithmetic progressions beyond the Bombieri-Vinogradov range where one also has to restrict the moduli  (cf. \cite{BFI,FI,polymath,zhang}, for instance). In our application it is crucial that $d$ has a factor $r$ which is a bit smaller than $p^{1/2}/d$.

\begin{theorem}\label{main2theorem}
There exists a small constant $\eta >0$ such that for all but a proportion $O_C(\log^{-C} D_1)$ of integers $d\in \D_\eta(0.2694,D_1)$ there exists a prime $p$ with $p^{0.2694}<d$ and a non-CM elliptic curve $E$ such that $p$ is totally split in $\Q(E[d])$.
\end{theorem}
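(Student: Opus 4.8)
The plan is to translate the question, via Kowalski's work, into one on primes in arithmetic progressions to moduli $d^2$, and then to establish a level-of-distribution statement just beyond Bombieri--Vinogradov by exploiting the factorization $d=rq$, using Harman's sieve to reduce to bilinear sums and the large sieve (only) to bound them on average over $d$.

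\emph{Reduction to a sieve problem.} Following \cite{kowalski}: if a prime $p$ of good reduction is totally split in $\Q(E[d])$, then $\mathrm{Frob}_p$ acts trivially on $E[d]$, so $(\Z/d\Z)^2\subseteq\bar E(\F_p)$ for the reduced curve $\bar E/\F_p$; by the Weil pairing this forces $d\mid p-1$, and with $a_p=p+1-\#\bar E(\F_p)$ it forces $d^2\mid p+1-a_p$ and $|a_p|\le 2\sqrt p$. Conversely, by standard facts about elliptic curves over $\F_p$ (Deuring, Waterhouse), whenever $p$ is prime, $d\mid p-1$, and there is an integer $a$ with $0<|a|\le 2\sqrt p$ and $a\equiv p+1\pmod{d^2}$, there is $\bar E/\F_p$ of trace $a$ with $(\Z/d\Z)^2\subseteq\bar E(\F_p)$; lifting $\bar E$ to $E/\Q$ with good reduction at $p$ and $j$-invariant avoiding the finitely many rational CM values gives a non-CM $E/\Q$ in which $p$ splits completely in $\Q(E[d])$ (with $N_E\ll p^3$). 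Hence, fixing $X\asymp D^{1/\alpha}$ with $\alpha=0.2694$, it suffices to find, for all but $O_C(\#\D_\eta(\alpha,D)\log^{-C}D)$ of the $d\in\D_\eta(\alpha,D)$, a prime $p\in[X/2,X]$ with $p\bmod d^2$ in the set $\mathcal C_d\subseteq(\Z/d^2\Z)^\times$ of classes $c$ with $c\equiv 1\pmod d$ and $c\equiv a-1\pmod{d^2}$ for some $0<|a|\le 2\sqrt X$. Such $a$ run through an interval of length $\asymp\sqrt X$ subject to $a\equiv 2\pmod d$, so $|\mathcal C_d|$ is of order $\sqrt X/d$ (up to the usual coprimality factor), the membership condition has the shape of a short interval modulo $d^2$, and $d^2\asymp D^2\asymp X^{2\alpha}$ with $1/2<2\alpha<1$.

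\emph{The moment method.} Put $N_d=\#\{p\in[X/2,X]\cap\PP:\ p\bmod d^2\in\mathcal C_d\}$, with expected main term $\mathrm{MT}_d=\frac{|\mathcal C_d|}{\varphi(d^2)}\bigl(\text{li}(X)-\text{li}(X/2)\bigr)\asymp X^{3/2}/(d^3\log X)$, a fixed power of $D$ for $d\sim D$. I would deduce $N_d>0$ outside the allowed exceptional set from the variance bound
\begin{equation*}
\sum_{d\in\D_\eta(\alpha,D)}\bigl(N_d-\mathrm{MT}_d\bigr)^2\ll_C\log^{-C}D\sum_{d\in\D_\eta(\alpha,D)}\mathrm{MT}_d^2
\end{equation*}
together with Chebyshev's inequality. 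Expanding the square reduces this to asymptotics with power-of-log savings for $\sum_d N_d$, $\sum_d\mathrm{MT}_dN_d$ and $\sum_d N_d^2=\sum_{p,p'}\#\{d\in\D_\eta(\alpha,D):p,p'\bmod d^2\in\mathcal C_d\}$. In the last sum the diagonal $p=p'$ contributes $\ll\sum_d N_d$, negligible against $\sum_d\mathrm{MT}_d^2$; for $p\ne p'$ the constraints $d\mid\gcd(p-1,p'-1)$ and $p-p'\equiv a-a'\pmod{d^2}$ with $|a-a'|\le 4\sqrt X<d^2$ pin $d$ to a thin set, and a careful count of such $d\in[D,2D]$, summed over the primes $p,p'$, reproduces $\sum_d\mathrm{MT}_d^2$ --- this part is technical but uses no equidistribution beyond Bombieri--Vinogradov.

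\emph{The hard input.} The substantive step is the evaluation of the prime-linear moments such as $\sum_{d\in\D_\eta(\alpha,D)}N_d$: this is prime equidistribution in progressions to moduli $d^2\asymp X^{2\alpha}$ with $2\alpha>1/2$, averaged over $d$, hence past the Bombieri--Vinogradov range, and it must use the factorization $d=rq$. I would apply Harman's sieve to replace the indicator of the primes in $[X/2,X]$ by a combination of Type~I terms $\sum_{m\sim M}\beta_m\sum_n\mathbf 1[mn\in[X/2,X],\ mn\bmod d^2\in\mathcal C_d]$ with $\beta_m$ divisor-bounded and $M$ below a fixed power of $X$, and Type~II terms $\sum_{m\sim M}\sum_{n\sim N}\beta_m\gamma_n\mathbf 1[mn\bmod d^2\in\mathcal C_d]$ with $(M,N)$ in a bilinear range. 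Here $r$ is taken just below $X^{1/2}/d$ (exactly the window in $\D_\eta(\alpha,D)$, since $D^{1/(2\alpha)-1-\eta}=X^{1/2-\alpha-\alpha\eta}$ sits a shade below $X^{1/2}/D$): then the condition modulo $r^2$ restricted to the short interval $\mathcal C_d$ is met by $\asymp\sqrt X/r^2$ residues, which lets one dispose of the $r^2$-aspect by completing the sum into characters modulo $r^2$ and invoking \emph{only} the classical large sieve (over $r$ and over those characters), while the residual modulus $q^2$ is small enough to be handled by the large sieve over characters modulo $q^2$. Averaging over $d=rq\in\D_\eta(\alpha,D)$ then gives the required savings for the accessible $(M,N)$; the inaccessible ranges are bounded trivially, and the Harman--Buchstab combinatorics make their total contribution an acceptable fraction of the main term precisely when $\alpha$ exceeds $0.2694$.

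\emph{Main obstacle.} I expect the crux to be this last step: extracting genuine cancellation, on average over $d=rq\in\D_\eta(\alpha,D)$, in the Type~II forms $\sum_{m\sim M}\sum_{n\sim N}\beta_m\gamma_n\mathbf 1[mn\bmod d^2\in\mathcal C_d]$ for $(M,N)$ outside the easy range, using nothing deeper than the large sieve. The delicate balancing act is to use the short-interval structure of $\mathcal C_d$ to kill the $r^2$-part by completion and orthogonality --- in place of any Kloosterman-sum or Weil-type estimate --- while keeping $r$ in a window narrow enough that $r^2\le X^{1/2-\eta}$ and $q^2$ stays within reach, yet wide enough that the $r$-average can be exploited nontrivially, and simultaneously controlling the combinatorial loss of Harman's sieve. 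Carrying out this optimisation is what pins down the numerical exponent $0.2694$ and forces both the restriction to $d\in\D_\eta(\alpha,D)$ and the exceptional proportion $O_C(\log^{-C}D)$.
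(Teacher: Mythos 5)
Your reduction (via Kowalski's lemmas, to primes $p\sim X$ with $p\equiv 1\ (d)$ and $p+1$ falling in the residue window of length $\asymp\sqrt X$ modulo $d^2$, with the lift to a non-CM curve over $\Q$ at the end) and your identification of the key mechanism (factorization $d=rq$ with $r$ just below $\sqrt X/d$, Harman's sieve, nothing deeper than the large sieve) are in line with the paper. The genuine gap is the variance framework you wrap around this. First, the off-diagonal of $\sum_d N_d^2$ does \emph{not} collapse to a divisor count over $d\mid\gcd(p-1,p'-1)$ together with a condition on $p-p'$: after writing $p-p'\equiv a-a'\ (d^2)$ you are still left with the requirement that $p+1$ itself lies in the sparse window modulo $d^2$ (a proportion $\asymp\sqrt X/d^3\approx X^{-0.31}$ of the classes), which is exactly the quantity $N_d$ you are trying to control. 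Unlike Barban--Davenport--Halberstam, the built-in average is over a thin set of classes, not all of them, so Bombieri--Vinogradov-level input cannot evaluate this term; indeed an evaluation of $\sum_d N_d^2$ to within $1+O(\log^{-C})$ at $d^2\asymp X^{0.5388}$ would essentially be an asymptotic for $N_d$ for almost all such $d$, strictly stronger than what the available Type I/II information yields (the paper gets asymptotics only at $\theta=1/2+\epsilon$, with error $O(\sqrt\epsilon)$ times the main term, and at $\theta=0.5388$ only a lower bound losing a constant). Second, the plan is internally inconsistent: Chebyshev requires asymptotics with power-of-log savings for $\sum_d N_d$ and $\sum_d \mathrm{MT}_d\,N_d$, but at $\alpha=0.2694$ Harman's sieve, with Type II sums only available for $N$ in the narrow window $[X^{\theta/2},X^{2-3\theta-\eta}]$, produces a \emph{lower bound} $\ge(c-o(1))$ times the main term with a small constant $c$ (this loss is precisely what pins down $0.2694$), not an asymptotic; so even your prime-linear moments are not available at the precision the variance step needs.

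The paper's route avoids forming a variance altogether: it runs the Harman lower-bound decomposition pointwise in $d$. The Type I estimate (Poisson summation plus Ramanujan-sum bounds) holds for every individual $d$, while the Type II estimate is proved as a first-moment bound $\sum_{d=rq}|\mathrm{error}(d)|\ll_C \log^{-C}X$ times the main term, via Cauchy--Schwarz and Linnik's dispersion ($W-2V+U$); there the average over the $\asymp\sqrt X/d$ admissible classes $a=bd+2$ is used twice, to tame the diagonal after Cauchy--Schwarz (this is where $r\ll X^{-\eta}\sqrt X/d$ enters) and to split the congruence modulo $r^2$ into congruences modulo $r$, so that only the large sieve for characters modulo $r$ is needed. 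Markov's inequality then gives the Type II formula for all but a proportion $O_C(\log^{-C}X)$ of $d\in\D_\eta(0.2694,D)$, the sieve yields $\sum_{p\sim X}1_{p\in\P(d)}\gg X^{3/2}/(d\varphi(d^2)\log X)>0$ for those $d$, and the passage to a non-CM curve over $\Q$ is as you describe. If you reorganize your argument this way---proving your bilinear estimates as averages in absolute value over $d$ and keeping the sieve pointwise in $d$---your outline becomes essentially the paper's proof; as it stands, the second-moment step is a missing idea that the stated tools cannot supply.
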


To approach this problem we rely  on the following characterization of totally split primes in $\Q(E[d])$. Suppose that $E/\Q$ has good reduction modulo $p$. By \cite[Lemma 2.2]{kowalski} the group structure of the reduction modulo $p$ is
\begin{align} \label{d1definition}
E_p(\F_p )\simeq \Z/d_1\Z \oplus \Z/d_1d_2\Z 
\end{align}
for certain  unique integers $d_1=d_1(p,E)$ and $d_2=d_2(p,E)$. Then by \cite[Lemma 2.7]{kowalski} $p$ is totally split in $\Q(E[d])$ if and only if $d|d_1(p,E)$. Hence, the problem is equivalent to finding elliptic curves $E/\F_p$ that have exceptionally large $d_1(p)$ (we can always lift $E/\F_p$ to an elliptic curve over $\Q$ and by considering shifts of the defining equation by $kp$ with $k \in \Z$ we can get a non-CM curve with the same reduction modulo $p$). The question of possible group structures for $E_p$ (i.e. possible values of $d_1$ and $d_2$) has been studied in \cite{bpfs,cdks,frs} but all of these works are restricted to the case $d_1 < p^{1/4}$. The formulation of the problem is slightly different here, so that we do not really extend the results in \cite{bpfs,cdks,frs}.

Using the above characterization we also get the following corollary of Theorem \ref{main2theorem}.
\begin{cor} For infinitely many primes $p$ there exists an elliptic curve $E/\F_p$ such that $d_1(E) > p^{0.2694}$.
\end{cor}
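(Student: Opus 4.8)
The plan is to read this corollary directly off Theorem~\ref{main2theorem} via the dictionary between totally split primes in $\Q(E[d])$ and the group structure of the reduction $E_p/\F_p$ recalled above; the only point that needs a word of care is extracting infinitely many distinct \emph{primes} $p$ rather than merely infinitely many admissible moduli $d$. First I would fix the small $\eta>0$ furnished by Theorem~\ref{main2theorem} and, for each sufficiently large $D$, apply that theorem with $\alpha=0.2694$. Since $\D_\eta(0.2694,D)$ contains a positive proportion of the integers in $[D,2D]$, whereas the exceptional subset has relative size $O_C(\log^{-C}D)=o(1)$, for $D$ large there is a non-exceptional $d\in\D_\eta(0.2694,D)$; for it we obtain a prime $p$ with $p^{0.2694}<d$ and a non-CM elliptic curve $E/\Q$ such that $p$ is totally split in $\Q(E[d])$.

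Next I would reduce modulo $p$. Since $p$ is totally split in $\Q(E[d])$ we have $p\equiv 1\pmod{d}$ by \cite[Corollary~6.12]{kowalski}, so $p>d$ and in particular $p\nmid d$; as a totally split prime is unramified, $E$ therefore has good reduction at $p$ (this is in any case part of the construction underlying Theorem~\ref{main2theorem}, in which $E$ is produced as a lift of a curve over $\F_p$). Writing the reduction as in~\eqref{d1definition} with invariants $d_1(p,E),d_2(p,E)$, by \cite[Lemma~2.7]{kowalski} the condition that $p$ be totally split in $\Q(E[d])$ is equivalent to $d\mid d_1(p,E)$; hence $d_1(E_p)=d_1(p,E)\ge d>p^{0.2694}$, so that $E':=E_p$ is an elliptic curve over $\F_p$ with $d_1(E')>p^{0.2694}$.

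Finally I would let $D$ grow. The congruence $p\equiv 1\pmod{d}$ already forces $p\ge d+1>D$ (equivalently one may use the Hasse bound $d\le d_1(p,E)\le|E_p(\F_p)|=d_1(p,E)^2 d_2(p,E)\le(\sqrt p+1)^2$, giving $p\ge(\sqrt d-1)^2$). Thus performing the construction along any sequence $D_1<D_2<\cdots\to\infty$ produces primes $p=p(D_j)>D_j\to\infty$, so infinitely many distinct primes $p$ occur, each carrying an elliptic curve $E'/\F_p$ with $d_1(E')>p^{0.2694}$, as claimed. The entire substance of the argument is in Theorem~\ref{main2theorem}; the rest is routine, the sole point to watch being exactly the passage from ``infinitely many $d$'' to ``infinitely many $p$'' just noted.
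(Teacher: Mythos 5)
Your argument is correct and follows essentially the same route as the paper, which presents the corollary as an immediate consequence of Theorem \ref{main2theorem} combined with the characterization that $p$ is totally split in $\Q(E[d])$ if and only if $d\mid d_1(p,E)$. The only soft spot---deducing good reduction at $p$ from total splitness alone---is harmless here because, as you yourself note, the curve in Theorem \ref{main2theorem} is constructed as a lift of a curve over $\F_p$; indeed Theorem \ref{almostall2theorem} already furnishes, for arbitrarily large $d$, a prime $p$ and a curve $E/\F_p$ with $d_1(E)=d>p^{0.2694}$ directly.
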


\subsection{Previous results}
The study of outside primes was initiated by Kowalski \cite{kowalski}. Similarly as in \cite[Section 6]{kowalski}, we use the following argument to reduce the problem to a question about primes in arithmetic progressions. By Hasse's bound we know that
\begin{align*}
|E_p(\F_p)| = p+1-a_p(E)
\end{align*}
for some $|a_p(E)| < 2 \sqrt{p}$. Then, by \cite[Lemma 2.3]{kowalski} we have
\begin{align*}
p \equiv a_p(E) - 1 \, (\text{mod} \, d_1^2) \quad \text{and} \quad a_p(E) \equiv 2 \, (\text{mod} \, d_1).
\end{align*}
Since we are allowed to vary the elliptic curve $E$, we are led to a problem of finding primes $p \equiv a-1 \, (\text{mod} \, d^2)$ for some $a \equiv 2 \, (d)$ with $|a| < 2 \sqrt{p}$, where $d^2 > p^{1/2 + \epsilon}$. This is made precise in the following two lemmata, the first of which is standard and the latter of which is based on theory developed by Deuring, Honda, Tate, and Waterhouse. In the below $d_1=d_1(E)=d_1(p,E)$ is as in (\ref{d1definition}).
\begin{lemma}\cite[Lemma 2.7]{kowalski} Let $E/\Q$ be an elliptic curve and let $d \geq 1$. Let $p$ be a prime such that $E$ has good reduction modulo $p$. Then $p$ is totally split in $\Q(E[d])$ if and only if $d|d_1(p,E)$.
\end{lemma}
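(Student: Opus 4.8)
The plan is to convert the splitting condition into a statement about how the Frobenius acts on the reduced curve $E_p/\F_p$, exploiting the good reduction hypothesis. First I would fix a basis of $E[d](\bar\Q)\cong(\Z/d\Z)^2$, realizing $G_d:=\text{Gal}(\Q(E[d])/\Q)$ as a subgroup of $\mathrm{GL}_2(\Z/d\Z)$ through its faithful action on the $d$-torsion (faithful since $\Q(E[d])$ is generated over $\Q$ by the coordinates of $E[d](\bar\Q)$). For an unramified prime $p$, being totally split in $\Q(E[d])$ is equivalent to $\mathrm{Frob}_p$ being trivial in $G_d$, i.e.\ to the arithmetic Frobenius at a chosen prime $\mathfrak p\mid p$ of $\Q(E[d])$ fixing $E[d](\bar\Q)$ pointwise. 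I would first dispose of the case $p\mid d$: a totally split prime is unramified in $\Q(E[d])\supseteq\Q(\zeta_d)$ (the inclusion by the Weil pairing), which forces $p\nmid d$, while if $d\mid d_1(p,E)$ then $d^2\mid|E_p(\F_p)|<(\sqrt p+1)^2$ by Hasse's bound, so $d<\sqrt p+1$ and again $p\nmid d$ (the handful of degenerate small primes being irrelevant here). Hence we may assume $p\nmid d$.

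The crux is then the standard consequence of good reduction furnished by the criterion of N\'eron--Ogg--Shafarevich (see e.g.\ Silverman, \emph{The Arithmetic of Elliptic Curves}): reduction modulo $\mathfrak p$ yields an isomorphism $E[d](\bar\Q)\xrightarrow{\ \sim\ }E_p[d](\bar\F_p)$ equivariant for the decomposition group at $\mathfrak p$, under which the arithmetic Frobenius at $\mathfrak p$ acts on $E_p[d](\bar\F_p)$ as the $p$-power Frobenius endomorphism $\pi=\pi_{E_p}$. Since the $\F_p$-rational points of $E_p$ are precisely its Frobenius-fixed points, $E_p(\F_p)=\ker(1-\pi)\subseteq E_p(\bar\F_p)$, and therefore $\mathrm{Frob}_p$ fixes $E[d](\bar\Q)$ pointwise if and only if $E_p[d](\bar\F_p)\subseteq E_p(\F_p)$.

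To finish I would observe that, since $p\nmid d$, we have $E_p[d](\bar\F_p)\cong(\Z/d\Z)^2$, so the inclusion $E_p[d](\bar\F_p)\subseteq E_p(\F_p)$ is equivalent to $E_p(\F_p)[d]\cong(\Z/d\Z)^2$. Writing $E_p(\F_p)\cong\Z/d_1\Z\oplus\Z/d_1d_2\Z$ as in (\ref{d1definition}), a direct computation of the $d$-torsion gives $E_p(\F_p)[d]\cong\Z/\gcd(d,d_1)\Z\oplus\Z/\gcd(d,d_1d_2)\Z$, which is $(\Z/d\Z)^2$ exactly when $d\mid d_1=d_1(p,E)$ (note $d\mid d_1$ then forces $d\mid d_1d_2$ automatically). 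Chaining the equivalences of the three paragraphs proves the lemma. I expect the only step requiring genuine care to be the compatibility assertion in the middle paragraph, namely the Galois-equivariance of the reduction map on prime-to-$p$ torsion together with the identification of the decomposition-group Frobenius with $\pi_{E_p}$; everything around it is elementary linear algebra over $\Z/d\Z$ and the structure theory of finite abelian groups.
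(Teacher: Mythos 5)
Your treatment of the main case $p\nmid d$ is correct and is the standard argument: the paper itself gives no proof of this lemma (it is quoted from Kowalski), and the intended proof is exactly the route you take — the Galois-equivariant, injective reduction map on prime-to-$p$ torsion coming from good reduction, the identification of the Frobenius at $\mathfrak p$ with the Frobenius endomorphism of $E_p$, and the elementary computation of $d$-torsion in $\Z/d_1\Z\oplus\Z/d_1d_2\Z$. That part needs no changes.

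The genuine gap is in your disposal of $p\mid d$. The claim that unramifiedness in $\Q(\zeta_d)$ "forces $p\nmid d$" is only valid for odd $p$ (via $\Q(\zeta_p)$) and for $4\mid d$ (via $\Q(i)$); when $p=2$ and $2\,\|\,d$ we have $\Q(\zeta_d)=\Q(\zeta_{d/2})$, which is unramified at $2$, so the step fails — and at $(p,d)=(2,2)$ it cannot be repaired, because the stated equivalence is actually false there. Take $E: y^2+xy=x^3+x$, with $\Delta=-63$ odd, so $E$ has good reduction at $2$. The nontrivial $2$-torsion points have $y=-x/2$ and $x\in\{0\}\cup\{\text{roots of } 4x^2+x+4\}$, so $\Q(E[2])=\Q(\sqrt{-63})=\Q(\sqrt{-7})$, in which $2$ is unramified and split (since $-7\equiv 1 \,(\mathrm{mod}\ 8)$), i.e.\ totally split; yet $E(\F_2)=\{O,(0,0),(1,0),(1,1)\}\cong \Z/4\Z$ (in characteristic $2$ only $(0,0)$ can have order $2$), so $d_1(2,E)=1$ and $2\nmid d_1$. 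Thus "the handful of degenerate small primes" is precisely where the statement breaks: the lemma must be read with $p\nmid d$ (harmless for every use in this paper, where $d$ is large and totally split primes satisfy $p\equiv 1\,(\mathrm{mod}\ d)$), or with $(p,d)=(2,2)$ excluded. For $p=2$, $d\equiv 2\,(\mathrm{mod}\ 4)$, $d>2$, your argument can be salvaged by applying the $p\nmid d$ case to $m=d/2$ and noting that $m\mid d_1(2,E)$ together with Hasse forces $m=1$. Your Hasse-bound argument in the converse direction has the same loose end at $(2,2)$, but it is harmless: full $2$-torsion is never $\bar{\F}_2$-rational in characteristic $2$, so $2\mid d_1(2,E)$ never occurs.
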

\begin{lemma}\cite[Corollary 6.12]{kowalski} \label{kowalskilemma} Let $p$ be a prime and let $d \geq 1$ be an integer. Then there exists an elliptic curve $E/\F_p$ with $d_1(E)=d$ if and only if there exists an integer $a$ with $|a|<2 \sqrt{p}$ such that $a \equiv 2 \, (d)$ and $d^2| p+1-a$. Furthermore, for $d>2p^{1/4}$ such an intenger $a$ is unique if it exists.
\end{lemma}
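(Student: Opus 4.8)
The plan is to prove the two implications separately, using the theory of elliptic curves over finite fields (Deuring, Honda--Tate, Waterhouse) for the existence statement and a direct module computation to pin down the group structure. Throughout, for an elliptic curve $E/\F_p$ with good reduction I write $a = a_p(E) = p + 1 - |E(\F_p)|$ and recall that the $d_1,d_2$ in (\ref{d1definition}) are the unique invariant factors of the finite abelian group $E(\F_p)$.

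First I would treat the easy direction: if $d_1(E) = d$, produce the claimed $a$. By Hasse's bound $a^2 \le 4p$, and since $4p$ cannot be a perfect square when $p$ is prime we get $|a| < 2\sqrt p$. From (\ref{d1definition}) with $d_1 = d$ we have $|E(\F_p)| = d^2 d_2$, so $d^2 \mid p + 1 - a$. Since $d \mid dd_2$, the full $d$-torsion $E[d]\cong(\Z/d\Z)^2$ sits inside $E(\F_p)\cong \Z/d\Z\oplus\Z/dd_2\Z$, i.e.\ $E[d]$ is pointwise $\F_p$-rational, so $\mathrm{Frob}_p$ acts as the identity on $E[d]$; taking the trace of this action (the characteristic polynomial of Frobenius is $X^2 - aX + p$) gives $a \equiv 2 \pmod d$. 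This step is elementary once one has the Galois action on torsion.

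For the converse, suppose such an $a$ exists. From $d\mid a - 2$ and $d^2 \mid p+1-a$ one gets $d \mid p - 1$, so $p \nmid d$; set $N := p + 1 - a$ and $d_2 := N/d^2 \in \Z$. By Deuring's theorem (in Waterhouse's refined form) there is a curve over $\F_p$ with Frobenius $\pi$ of trace $a$, where $\pi^2 - a\pi + p = 0$ and $K := \Q(\pi)$ is imaginary quadratic in the relevant cases (the cases $p = 2,3$, and the supersingular case $a = 0$, where necessarily $d \le 2$, I would dispatch by a direct finite check). Putting $\mu := \pi - 1$ (so $\mu^2 + (2-a)\mu + N = 0$) and $\omega := \mu/d$, the hypotheses $d\mid 2 - a$ and $d^2 \mid N$ say precisely that $\omega$ satisfies the monic integral equation $\omega^2 + \tfrac{2-a}{d}\omega + \tfrac{N}{d^2} = 0$, so $\omega \in \mathcal{O}_K$ and $\mathcal{O} := \Z[\omega]$ is an order with $\Z[\pi] = \Z[1 + d\omega] \subseteq \mathcal{O} \subseteq \mathcal{O}_K$. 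By the structure theory of endomorphism rings within an ordinary isogeny class (Deuring's CM theory; Waterhouse; Schoof), there is $E/\F_p$ in this class with $\mathrm{End}_{\F_p}(E) = \mathcal{O}$, and for such a curve $E(\F_p) \cong \mathcal{O}/(\pi - 1)\mathcal{O}$ as abelian groups. Computing this quotient in the $\Z$-basis $\{1,\omega\}$: the ideal $(\pi-1)\mathcal{O} = d\omega\,\mathcal{O}$ is spanned by $d\omega$ and $d\omega^2 = (a-2)\omega - dd_2$, so in coordinates it is the sublattice of $\Z^2$ with relation matrix of determinant $\pm N$ whose entry-gcd is $d$ (here using $d\mid a-2$); its Smith normal form is therefore $\mathrm{diag}(d, dd_2)$, giving $E(\F_p) \cong \Z/d\Z \oplus \Z/dd_2\Z$ and hence $d_1(E) = d$.

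The main obstacle is the converse, and specifically two points: invoking the correct form of Deuring's theorem (so that both the trace $a$ is realized \emph{and} every order between $\Z[\pi]$ and $\mathcal{O}_K$ occurs as an endomorphism ring in the class, plus the clean group-theoretic description $E(\F_p)\cong\mathcal{O}/(\pi-1)\mathcal{O}$), and verifying that the chosen order yields invariant factors \emph{exactly} $(d, dd_2)$ — not merely a group with nontrivial $d$-torsion — so that $d_1$ is precisely $d$. Once the group structure is determined exactly, the value of $d_1$ is forced by the uniqueness in (\ref{d1definition}), and the handling of the degenerate small cases ($p=2,3$, $a=0$) is routine.
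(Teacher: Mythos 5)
The paper never proves this lemma itself — it is imported verbatim as Kowalski's Corollary 6.12, whose proof rests on exactly the Deuring--Waterhouse (and Lenstra/R\"uck) theory you invoke — and your reconstruction (Hasse plus the trivial-Frobenius-action-on-$E[d]$ trace argument for the forward direction; realizing the trace $a$, choosing a curve with endomorphism ring $\Z[(\pi-1)/d]$, and computing $\mathcal{O}/(\pi-1)\mathcal{O}$ via Smith normal form to get invariant factors exactly $(d,dd_2)$ for the converse) is correct and follows essentially the same route as the cited source. The only soft spot is calling the supersingular case $a=0$, $d=2$ a ``direct finite check'': it actually requires, for every $p\equiv 3 \,(\mathrm{mod}\ 4)$ with $4\mid p+1$, a supersingular curve with rational $2$-torsion (e.g.\ $y^2=x^3-x$, for which $d_1=2$ is forced since $d_1\mid p-1$ and $d_1^2\mid p+1$), but this is standard and does not affect the argument.
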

To see the uniqueness of $a$ in the above lemma, note that (for $d>2p^{1/4}$ ) if there are two such integers $a_1$ and $a_2$, then the conditions $|a_j| < 2 \sqrt{p}$, $d^2>4 \sqrt{p}$, and $p \equiv a_1-1 \equiv a_2-1 \, (d^2)$ imply that $a_1=a_2$.
 
To study the values of $d_1(p,E)$, in \cite[Section 6.3]{kowalski} the following sets are defined
\begin{align*}
\DD_1(p)&:= \{d\geq 1: \,\text{there exists } E/\F_p \, \text{with} \, d=d_1(E)\}, \\
\DD_s(p)&:= \DD_1(p)\cap \{d \leq 2 p^{1/4}\}, \quad \text{and} \quad \DD_\ell(p):= \DD_1(p)\cap \{d > 2p^{1/4}\}.
\end{align*}
Then by Lemma \ref{kowalskilemma} we see that $d \in \DD_s(p)$ if and only if $d|(p-1)$ and $d \leq 2 p^{1/4}$, since in that case we can always find a suitable $a$ \cite[Lemma 6.36]{kowalski}. For $d>2p^{1/4}$ the integer $a$ is unique if it exists. Therefore, we have
\begin{align*} \\
\sum_{p \leq X} |\DD_s(p)|\, &= \sum_{d \leq 2 X^{1/4}} \sum_{\substack{d^4 /16 < p \leq X \\ p \equiv 1 \,(d)}} 1 \quad \quad\quad \quad\quad \quad \text{and} \\
\sum_{p \leq X} |\DD_\ell(p)|\, &= \sum_{d \geq 1} \sum_{\substack{a < 2\sqrt{2X} \\ a \equiv 2 \, (d)}} \,\sum_{\substack{|a|^2/4 < p \leq \min\{X,d^4/16\} \\ p \equiv a-1 \, (d^2)}} 1.
\end{align*}

Hence, understanding the possible values of $d_1(p,E)$ reduces to studying distribution of primes in arithmetic progressions. By using the Bombieri-Vinogradov Theorem and the Brun-Titchmarsh inequality, it was shown by Kowalski \cite[Propositions 6.39 and 6.40]{kowalski} that  for $c=\zeta(2)\zeta(3)/\zeta(6)$ 
\begin{align*}
\sum_{p \leq X} |\DD_s(p)| = \frac{c X}{4} + O\bigg(\frac{X}{\log X}\bigg) \quad
\text{and} \quad \sum_{p \leq X} |\DD_\ell(p)| \ll \frac{X}{\log X},
\end{align*}
which together imply \cite[Proposition 6.38]{kowalski}
\begin{align*}
\sum_{p \leq X} |\DD_1(p)| = \frac{c X}{4} + O\bigg(\frac{X}{\log X}\bigg).
\end{align*}
In \cite[Remark 6.44]{kowalski} it is noted that for heuristics about outside primes it would be helpful to have a lower bound for $\sum_{p\leq X} |\DD_\ell(p)|$ and that this appears quite difficult since we have to understand the distribution of primes in arithmetic progressions to moduli $d^2 > 4 \sqrt{p}$, which goes beyond the Bombieri-Vinogradov range. Answering this question was the original motivation for this manuscript. To the author's knowledge this problem and outside primes in general have not been addressed in the literature outside of \cite{kowalski}.

It was recently shown by the author \cite{m} that for any fixed $b \neq 0$ there are infinitely many primes $p$ such that there exists a large square divisor $d^2|(p-b)$ with $d^2 > p^{1/2+1/2000},$ which improved the previous results of Matom\"aki \cite{matomaki}, and Baier and Zhao \cite{bz}. To break past the $p^{1/2}$-barrier it is required that $d$ satisfies a very strong factorization property, namely that all prime factors of $d$ are less than $d^\delta$ for some small $\delta >0$. This already implies the above results for some $\epsilon$ with a fixed $a_p(E)=2$ and restricting $d$ to $d^\delta$-smooth numbers. The arguments in this manuscript are similar in spirit but the extra averaging over $a$ offers major simplifications to the arguments as well as a much better exponent.
\subsection{Asymptotic results} \label{asympsubsection}
Motivated by the previous discussion, for any $X\gg 1$ and $\theta \in (1/2,1)$  we define
\begin{align*}
\DD_X(p;\theta) := \{ X^{\theta} < d^2 \leq 2 X^{\theta}:  \, d = d_1(E) \quad \text{for some} \quad E / \F_p \}.
\end{align*}
We will show
\begin{theorem} \label{asymptotictheorem} For $\theta=1/2 + \epsilon $ with small $\epsilon>0$ we have
\begin{align*}
\sum_{X < p \leq 2X} |\DD_X(p;\theta)| = \sum_{X^\theta < d^2 \leq 2 X^\theta} \sum_{\substack{|a| < 2 \sqrt{2 X} \\ a \equiv 2 \,\, (d)}} \frac{2X- a^2/4}{\varphi(d^2) \log X} + O \bigg( \frac{ \sqrt{\epsilon} X^{3/2-\theta}}{\log X}  \bigg),
\end{align*}
where the implied constant is absolute.
\end{theorem}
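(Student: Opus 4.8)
\emph{Reduction to arithmetic progressions.} Since $\theta>1/2$ and $X$ is large, $d^{2}>X^{\theta}>4\sqrt{2X}\ge 4\sqrt{p}$ for every prime $p\le 2X$, so the integer $a$ in Lemma~\ref{kowalskilemma} is unique whenever it exists. Hence, for each such $p$, Lemma~\ref{kowalskilemma} puts $D_{X}(p;\theta)$ in bijection with the pairs $(d,a)$ satisfying $X^{\theta}<d^{2}\le 2X^{\theta}$, $a\equiv 2\pmod d$, $|a|<2\sqrt{p}$ and $p\equiv a-1\pmod{d^{2}}$. Summing over $p$ and writing $\A_{d}:=\{a\equiv 2\pmod d:\ |a|<2\sqrt{2X}\}$, the left-hand side becomes $\sum_{X^{\theta}<d^{2}\le 2X^{\theta}}\sum_{a\in\A_{d}}\#\{p\in I_{a}:\ p\equiv a-1\pmod{d^{2}}\}$, where $I_{a}$ is the range of admissible $p$ (with endpoints among $a^{2}/4$ and $2X$). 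Two features are used throughout: since $a-1\equiv 1\pmod d$, every class $a-1$ is coprime to $d^{2}$; and as $a$ runs over $\A_{d}$ the classes $a-1\bmod d^{2}$ are exactly $\{1+kd:\ |k|\le K\}$ with $K\asymp\sqrt{X}/d$, which, as $K\ll d$, form a short progression of common difference $d$ inside $\Z/d^{2}\Z$ with only $\asymp\sqrt{X}/d$ terms. Replacing the prime count by the von Mangoldt sum $\psi(I_{a};d^{2},a-1)$ and splitting $I_{a}$ into half-lines costs an admissible error by partial summation.

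\emph{The main term.} Isolating the principal character modulo $d^{2}$ — equivalently, the prime number theorem in the progression $a-1\pmod{d^{2}}$ — gives the expected count $\tfrac{2X-a^{2}/4}{\varphi(d^{2})\log X}(1+o(1))$ for each $(d,a)$; summing over $\A_{d}$ and over $X^{\theta}<d^{2}\le 2X^{\theta}$ reproduces the stated main term, whose total size is $\asymp X^{3/2-\theta}/\log X=X^{1-\epsilon}/\log X$. So the asymptotic is meaningful precisely for small $\epsilon$.

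\emph{The error term.} After detecting the progressions by Dirichlet characters modulo $d^{2}$ and removing the principal ones, it remains to bound
\begin{align*}
\mathcal{E}\ :=\ \sum_{X^{\theta}<d^{2}\le 2X^{\theta}}\frac{1}{\varphi(d^{2})}\sum_{\chi\ne\chi_{0}\ (d^{2})}\Big(\sum_{a\in\A_{d}}\overline{\chi}(a-1)\Big)\psi_{\chi}(x)
\end{align*}
(with $x\le 2X$ the appropriate cut-off, and similarly for the lower endpoints of the $I_{a}$) by $O(\sqrt{\epsilon}\,X^{1-\epsilon}/\log X)$; write $\widehat{\A}_{d}(\chi):=\sum_{a\in\A_{d}}\overline{\chi}(a-1)$. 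One decomposes $\Lambda$ by a combinatorial identity of Vaughan type — in the spirit of Bombieri--Vinogradov and of the author's treatment of large square divisors of shifted primes in \cite{m}, but considerably simpler because of the extra average over $a$ — turning each $\psi_{\chi}$ into linear and bilinear sums twisted by $\chi$, and splits $\chi$ by its conductor $f\mid d^{2}$. When $f\nmid d$, the conductor $f$ has a nontrivial square part $f_{2}^{2}$; along $\{1+kd\}$ the twisting character is nonprincipal on the subgroup it cuts out, so summation over a period gives cancellation and $|\widehat{\A}_{d}(\chi)|\ll\min(K,f_{2})$, beating the trivial bound $\#\A_{d}\asymp\sqrt{X}/d$. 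Combining this with the identity $\sum_{\chi\ (d^{2})}|\widehat{\A}_{d}(\chi)|^{2}=\varphi(d^{2})\,\#\A_{d}$ (the classes being pairwise incongruent mod $d^{2}$), the large sieve for multiplicative characters applied to the bilinear pieces, the multiplicity bound $\#\{d\le D:\ f\mid d^{2}\}\ll D/\sqrt{f}$, and summation over $d$ and $f$, the estimate closes — ultimately because the number of modulus--residue pairs involved is only $\sum_{d}\#\A_{d}\asymp\sqrt{X}$, well inside the Bombieri--Vinogradov budget of $\asymp X$. Optimising the split (essentially at $f\asymp d$) and tracking the dependence on $\theta=1/2+\epsilon$ yields $\mathcal{E}\ll\sqrt{\epsilon}\,X^{1-\epsilon}/\log X$.

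\emph{The main obstacle.} The delicate regime is that of the large conductors, where $d^{2}$ lies just past the Bombieri--Vinogradov threshold $X^{1/2}$ (it can be as large as $2X^{1/2+\epsilon}$). What saves the argument there is precisely the averaging over the $\asymp\sqrt{X}/d$ admissible residue classes — equivalently, the short-progression structure of $\{1+kd\}_{|k|\le K}$ and the cancellation it forces in $\widehat{\A}_{d}(\chi)$ — and it is the shrinking of this reserve as $\epsilon$ grows that is responsible for the factor $\sqrt{\epsilon}$, rather than the arbitrary power of $\log X$ available in the classical range $\theta<1/2$.
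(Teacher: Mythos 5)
Your reduction to primes in progressions $a-1 \pmod{d^2}$ and your main-term computation match the paper's starting point, but the treatment of the error term is where the proposal breaks down. Write $\chi(1+kd)=e_d(c_\chi k)$ on the short progression $\{1+kd:|k|\le K\}$, $K\asymp \sqrt{X}/d$. The true bound is $|\widehat{\mathcal{A}}_d(\chi)|\ll \min(K,\|c_\chi/d\|^{-1})$, and your stated bound $\min(K,f_2)$ is vacuous in the generic case (e.g.\ $d$ prime and $\chi$ of conductor $d^2$, where $f_2=d>K$). More importantly, for the $\asymp \varphi(d)\cdot d/K\asymp \varphi(d)X^{\epsilon/2}$ characters with $|c_\chi|\le d/K$ there is no cancellation at all, $\widehat{\mathcal{A}}_d(\chi)\asymp K$, and these characters generically have conductor of size $\asymp d^2>X^{1/2}$ (not dividing $d$). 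For that subfamily you would need an average saving of $X^{1/4}$ in $\psi_\chi(2X)$ over roughly $X^{1/2+\epsilon/2}$ characters of conductor about $X^{1/2+\epsilon}$; Vaughan's identity plus the classical large sieve cannot deliver this, since the large sieve's loss is governed by $Q^2+N$ with $Q^2\approx X^{1+2\epsilon}>X$, and restricting to a sparse subfamily of characters does not reduce the $Q^2$ term. Your "budget" heuristic (only $\asymp\sqrt{X}$ modulus–residue pairs) is not what the large sieve measures — this is precisely the Bombieri–Vinogradov barrier the paper is concerned with, and the sketch offers no mechanism for crossing it. (The part of the family with conductor dividing $d$ is indeed harmless, since there the moduli are only $\asymp X^{1/4+\epsilon/2}$; the gap is the large-conductor part.)

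The paper crosses the barrier by a different route, and the factor $\sqrt{\epsilon}$ has a different origin than the one you describe. First, the $d$ are restricted to those with a convenient factorization $d=rq$, $q$ prime of prescribed size (the set $\mathcal{S}_X(\delta)$ with $\delta^2=2\epsilon$); the unfactorable $d$ are discarded using Brun–Titchmarsh and a sieve bound at a cost $O(\delta)$ of the main term — one source of $O(\sqrt{\epsilon})$. Second, primes are detected by Harman's sieve rather than Vaughan's identity, because the Type II estimate is only available for $X^{\theta/2}\ll N\ll X^{1-\theta-3\delta}$, leaving a gap near $X^{1/2}$ whose width $O(\delta+\epsilon)$ is the second source of $O(\sqrt{\epsilon})$. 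Third, the Type II estimate itself is proved by Linnik's dispersion method: after Cauchy–Schwarz in $m$ (keeping the sums over $q$ and over the residue parameter $b$ inside), the two congruences modulo $r^2q_1^2$ and $r^2q_2^2$ combine, and the average over $b_1,b_2$ (which is longer than $r$) reduces everything to the single condition $n_1\equiv n_2\pmod r$ — so only Dirichlet characters to the small modulus $r\ll X^{1/4-\epsilon/2}$ ever appear, where the classical large sieve suffices; the $b$-average also controls the diagonal. Without this factorization-plus-dispersion device, an argument working directly with characters modulo $d^2$ of conductor beyond $X^{1/2}$, as yours does, stalls exactly at the point described above.
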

\begin{remark} The sum on the right-hand side can be computed explicitly to get
\begin{align*}
\sum_{X^\theta < d^2 \leq 2 X^\theta} \sum_{\substack{|a| < 2 \sqrt{2 X} \\ a \equiv 2 \,\, (d)}} \frac{2X- a^2/4}{\varphi(d^2) \log X} \,= \, (1+o(1))\frac{1575 \zeta(3) \sqrt{2}}{4 \pi^4} \cdot \frac{X^{3/2-\theta}}{\log X}.
\end{align*}
\end{remark}
We will prove this in Section \ref{sieveasympsection}. The main idea is to restrict to $d$ which factorize suitably. For $\theta=1/2+\epsilon$ we will be able to work with a set of $d$ with density $1-O( \sqrt{\epsilon})$.

By a slight modification of our arguments we obtain an asymptotic formula for the summatory function of $|\DD_\ell(p)|$, answering the question of Kowalski \cite[Remark 6.44]{kowalski}. To see this, choose $\epsilon=10 \log \log X/ \log X$ and apply similar arguments in the range $d^2 \in [X^{1/2-\epsilon}, X^{1/2+\epsilon}]$, splitting the sum dyadically. The contribution from $d^2> X^{1/2+\epsilon}$ is $\ll X \log^{-10} X$ by \cite[Proposition 6.42]{kowalski}, for instance. The contribution from $d^2< X^{1/2-\epsilon}$ is negligible by trivial bounds. For the middle range we are able to save a factor of $\sqrt{\epsilon}$ in the error terms, similar to Theorem \ref{asymptotictheorem}. The precise details are simple enough but fairly lengthy, so we leave the proof to the interested reader.
\begin{theorem} \label{delltheorem}
We have
\begin{align*}
\sum_{p \leq X} |\DD_\ell (p)| \, = \sum_{d \geq 1} \sum_{\substack{a < 2\sqrt{X} \\ a \equiv 2 \, (d)}} \frac{1}{\varphi(d^2)}\sum_{\substack{|a|^2/4 < p \leq \min\{X,d^4/16\} }} 1+ O \bigg( \frac{X (\log \log X)^{1/2}}{\log^{3/2} X}\bigg),
\end{align*}
where the implied constant is absolute.
\end{theorem}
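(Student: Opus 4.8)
The plan is to reduce to the dyadic pieces that the proof of Theorem~\ref{deltatheorem} supplies, sum them up, and carefully account for the total error. Put $\epsilon:=10\log\log X/\log X$ and $\delta:=\sqrt{2\epsilon}$, so $X^{\epsilon}=(\log X)^{10}$ and $\delta^{2}=2\epsilon$. By Lemma~\ref{kowalskilemma} and the uniqueness of $a$ when $d>2p^{1/4}$ (recalled after the definitions of $D_s(p),D_\ell(p)$),
\begin{align*}
\sum_{p\leq X}|D_{\ell}(p)|=\sum_{d\geq 1}\ \sum_{\substack{|a|<2\sqrt{2X}\\ a\equiv 2\,(d)}}\ \sum_{\substack{p\equiv a-1\,(d^{2})\\ a^{2}/4<p\leq\min\{X,\,d^{4}/16\}}}1.
\end{align*}
The contribution of $p\leq X^{0.9}$ is $\leq\sum_{p\leq X^{0.9}}|D_{1}(p)|\ll X^{0.9}$ by \cite[Proposition~6.38]{kowalski}, and likewise the part of the claimed main term with $p\leq X^{0.9}$ is $O(X^{0.9})$; both are negligible. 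For the rest I would decompose $p\in(P,2P]$ and $d^{2}\in(Y,2Y]$ dyadically with $X^{0.9}<P\leq X$. One always has $d^{2}\ll P$ (from $d\mid a-2$ with $|a|<2\sqrt{2P}$ when $a\neq 2$, and from $d^{2}\mid p-1$ when $a=2$), while $p<d^{4}/16$ forces $Y\gg\sqrt{P}$, so only $O(\log^{2}X)$ pairs of blocks occur.

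For blocks with $Y>P^{1/2+\epsilon}$ I would use only Brun--Titchmarsh (cf.\ \cite[Proposition~6.42]{kowalski}), which gives $\#\{P<p\leq 2P:\ p\equiv a-1\,(d^{2})\}\ll P/(\varphi(d^{2})\log(P/d^{2}))$ for $d^{2}\leq P^{1-o(1)}$ and $\leq 1$ otherwise, together with $\ll\sqrt{P}/d+1$ admissible values of $a$ per $d$. Using $\varphi(d^{2})=d\varphi(d)$ and $\sum_{d\sim\sqrt{Y}}1/\varphi(d)\ll 1$, this bounds the block (and its main term) by $\ll P^{3/2}/(Y\log P)+P^{1/2+o(1)}$, and summing over the relevant dyadic $Y$ and then over dyadic $P\leq X$ gives $\ll X^{1-\epsilon}/\log X=X/(\log X)^{11}$, well within the stated error.

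For blocks with $\sqrt{P}\ll Y\leq P^{1/2+\epsilon}$, write $Y=P^{\theta}$ with $\theta\in[1/2,1/2+\epsilon]$ and run the argument of Section~\ref{sieveasympsection} (which proves Theorem~\ref{deltatheorem}) with $P$ in place of $X$, this $\theta$, and $d^{2}$ restricted to $\SS_{P}(\delta)$; the extra constraint $p<d^{4}/16$ is active only in the $O(1)$ blocks with $Y\asymp\sqrt{P}$, where it merely truncates the $p$-range. This produces the main term $M_{P,Y}:=\sum_{d,a}\varphi(d^{2})^{-1}\#\{P<p\leq 2P:\ a^{2}/4<p<d^{4}/16\}$ with error $O(\sqrt{\epsilon}\,P^{3/2}/(Y\log P))$ (absorbing an $O(P/(Y\log^{2}P))$ discrepancy between the $1/\log$-weighted count of Theorem~\ref{deltatheorem} and an honest prime count). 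Removing the restriction $d^{2}\in\SS_{P}(\delta)$ costs a further $O(\delta\,P^{3/2}/(Y\log P))$, by the same sieve bound on the density of integers with no prime factor in $[P^{\delta^{2}},P^{\delta}]$ used before Theorem~\ref{deltatheorem}. Summing the errors over dyadic $Y\in[c\sqrt{P},P^{1/2+\epsilon}]$ (geometric, dominated by $Y\asymp\sqrt{P}$) yields $O((\sqrt{\epsilon}+\delta)P/\log P)$, and then over dyadic $P\leq X$ (again geometric, dominated by $P\asymp X$) yields $O((\sqrt{\epsilon}+\delta)X/\log X)$. The main terms $M_{P,Y}$ telescope over the $p$-blocks, and after adding back the negligible $p\leq X^{0.9}$ part they combine into $\sum_{d\geq 1}\sum_{a\equiv 2\,(d),\,|a|<2\sqrt{X}}\varphi(d^{2})^{-1}\#\{p:\ a^{2}/4<p\leq\min\{X,d^{4}/16\}\}$, the asserted main term.

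Adding the three contributions, the total error is $O(X^{0.9})+O(X/(\log X)^{11})+O((\sqrt{\epsilon}+\delta)X/\log X)$, and since $\epsilon=10\log\log X/\log X$ and $\delta=\sqrt{2\epsilon}$ give $\sqrt{\epsilon}+\delta\asymp(\log\log X/\log X)^{1/2}$, the last term is $O\big(X(\log\log X)^{1/2}/(\log X)^{3/2}\big)$ and dominates. The main difficulty is not a new estimate but uniformity and bookkeeping: one must know that the sieve bound of Section~\ref{sieveasympsection} holds with a single smoothness parameter $\delta=\sqrt{2\epsilon}$ uniformly for all $\theta\in[1/2,1/2+\epsilon]$ and all dyadic $P\leq X$ (the choice $\delta^{2}=2\epsilon$, hence $\delta\geq 2\epsilon$, is precisely what leaves room in the factorization $d=rq$ for every such $\theta$), and that $\epsilon\asymp\log\log X/\log X$ is the sweet spot balancing the sieve error $O(\sqrt{\epsilon}X/\log X)$ against the trivial tail $X^{1-\epsilon}/\log X$.
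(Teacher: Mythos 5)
Your proposal is correct and follows essentially the same route as the paper, whose own proof is only the brief sketch preceding the theorem: choose $\epsilon=10\log\log X/\log X$, run the machinery of Section \ref{sieveasympsection} (with the $\SS_X(\delta)$ restriction removed as in Section \ref{asympsubsection}) dyadically over $d^2\in[X^{1/2},X^{1/2+\epsilon}]$, and bound the tail $d^2>X^{1/2+\epsilon}$ trivially by Brun--Titchmarsh, so that the error balances to $O(X(\log\log X)^{1/2}/\log^{3/2}X)$. Your extra bookkeeping (dyadic blocks in $p$, the PNT-versus-$1/\log$ discrepancy, the truncation $p<d^4/16$ near $Y\asymp\sqrt{P}$, and the uniformity of the sieve estimates as $\epsilon,\delta\to 0$) is exactly what the paper leaves implicit, and your accounting of it is sound apart from the harmless slip writing $P/(Y\log^2 P)$ for the discrepancy term in place of $P^{3/2}/(Y\log^2 P)$.
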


\subsection{Lower bound result}
If we are not interested in asymptotic formulas but lower bounds of the correct order of magnitude, then the exponent $\theta$ can be increased considerably. The proof of the following result is given in Section \ref{lowerboundsection}.
In the proof we make use of a certain factorization property $d=rq$ (cf. Section \ref{lowerboundsection}) and the fact that a positive proportion of $d$ satisfy this property. 
\begin{theorem} \label{lowerboundtheorem} Let $\theta \in [1/2,1/2+0.0388]$. Then
\begin{align*}
\sum_{X < p \leq 2X} |\DD_X(p;\theta)| \,\, \gg  \frac{X^{3/2-\theta}}{ \log X}.
\end{align*}
\end{theorem}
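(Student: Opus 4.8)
The plan is to turn the double sum into a nonnegative-weighted count of primes and then estimate that count by Harman's sieve, with the classical large sieve for multiplicative characters playing the role of the exponential-sum input used in \cite{BFI,FI,polymath,zhang}. First I would rewrite the left side: by Lemma~\ref{kowalskilemma} and the manipulations preceding Theorem~\ref{deltatheorem}, $\sum_{X<p\le 2X}|D_X(p;\theta)|$ equals
\[
\sum_{X^\theta<d^2\le 2X^\theta}\ \sum_{\substack{|a|<2\sqrt{2X}\\ a\equiv 2\,(d)}}\ \#\{\,X<p\le 2X:\ p\equiv a-1\,(d^2),\ p>a^2/4\,\}.
\]
Since $\theta\ge 1/2$ the modulus $d^2$ has size $\gtrsim X^{1/2}\gtrsim\sqrt p$, so apart from a negligible borderline set at most one admissible $a$ occurs for each pair $(p,d)$; writing $p-1=dm$ one checks that such an $a$ exists precisely when $(p-1)/d \bmod d$ lies within $T:=2\sqrt p/d\asymp\sqrt X/d$ of $0$. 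Hence, up to a harmless boundary term, $\sum_{X<p\le 2X}|D_X(p;\theta)|=\sum_{n\sim X}\mathbf 1_{\PP}(n)\,w(n)$, where
\[
w(n):=\#\{\,d:\ X^\theta<d^2\le 2X^\theta,\ d\mid n-1,\ \|(n-1)/d\|_d<T\,\}
\]
and $\|x\|_d$ denotes the distance from $x$ to the nearest multiple of $d$. As $w\ge 0$, I may replace $w$ by its restriction $w_{\mathcal R}$ to a subset $\mathcal R\subseteq(X^{\theta/2},\sqrt 2\,X^{\theta/2}]$, which I take to consist of those $d=rq$ with $q$ prime and $r$ in a fixed dyadic range with $r<X^{-\eta}T$ for a small fixed $\eta>0$ (in the spirit of the remark after Theorem~\ref{deltatheorem}); by Mertens' theorem such $d$ form a positive proportion of the dyadic range, and the predicted main term of $\sum_n\mathbf 1_{\PP}(n)w_{\mathcal R}(n)$ is still $\asymp\sum_{d\in\mathcal R}\tfrac{2T}{\varphi(d^2)}\cdot\tfrac{X}{\log X}\asymp X^{3/2-\theta}/\log X$. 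So it is enough to show that this weighted prime sum is at least a positive multiple of that.

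Next I would run Harman's sieve. A Buchstab/Vaughan decomposition of $\mathbf 1_{\PP}$ writes $\sum_n\mathbf 1_{\PP}(n)w_{\mathcal R}(n)$ as its expected main term plus boundedly many Type~I sums $\sum_{e\le X^{\tau_1}}\lambda_e\sum_{n\equiv 0\,(e)}w_{\mathcal R}(n)$ and Type~II sums $\sum_{\substack{mn\sim X\\ X^{\tau_2}\le m\le X^{1/2}}}\alpha_m\beta_n\,w_{\mathcal R}(mn)$, with $\lambda,\alpha$ divisor-bounded. Harman's machinery then yields $\sum_n\mathbf 1_{\PP}(n)w_{\mathcal R}(n)\ge (c+o(1))\cdot(\text{main term})$ for a positive constant $c$, provided each of these sums is $o(X^{3/2-\theta}/\log X)$ and the exponents $\tau_1,\tau_2$ lie in the admissible ranges. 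The key point is that for a lower bound of the right order one needs only $c>0$, not $c=1+o(1)$; this is what lets $\theta$ be taken far past the $1/2+\epsilon$ of Theorems~\ref{asymptotictheorem}--\ref{deltatheorem}.

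To estimate these sums I would open the condition defining $w_{\mathcal R}$ by additive characters: for $d\mid n-1$,
\[
\mathbf 1\big(\|(n-1)/d\|_d<T\big)=\frac1d\sum_{h\bmod d}\widehat T_d(h)\,e_{d^2}\!\big(h(n-1)\big),\qquad \widehat T_d(h)=\sum_{|t|<T}e_d(-ht),
\]
so that $\widehat T_d(0)\asymp T$ and $\widehat T_d(h)\ll d/\|h\|_d$ for $h\ne 0$. The term $h=0$ produces the main term and is controlled, after the normalization $T/d\asymp X^{1/2-\theta}$, by the Bombieri--Vinogradov theorem for moduli $d\le X^{\theta/2}<X^{1/2}$, with error $o(X^{3/2-\theta}/\log X)$. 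For $h\ne 0$ the Type~I sums reduce, after completing the $n$-sum over its residue class, to geometric sums $\sum_k e_d(hek)$, which are bounded elementarily; no exponential-sum input beyond this is needed. The Type~II sums with $h\ne 0$ are the main work: here one re-expresses $\mathbf 1_{d\mid mn-1}\,e_{d^2}(h(mn-1))$ in terms of the $\varphi(d)$ Dirichlet characters modulo $d^2=r^2q^2$ lying over the additive character $h$, applies Cauchy--Schwarz, and invokes the classical large sieve for multiplicative characters summed over $q\sim Q$ and over $h$; it is crucial that $q$ runs over $\asymp Q$ primes and that $r<X^{-\eta}T$, so that the diagonal term in the large sieve dominates and the bound beats the trivial one with room to spare. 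Neither a Kloosterman-sum bound nor the Weil/Deligne bound is used. Collecting the estimates, the Type~I and Type~II sums are $o(X^{3/2-\theta}/\log X)$ whenever $\theta\le 1/2+0.388$; the constant $0.388$ is the outcome of optimizing the exponents $\tau_1,\tau_2$ against the ranges in which these estimates are nontrivial, and this gives $\sum_{X<p\le 2X}|D_X(p;\theta)|\gg X^{3/2-\theta}/\log X$.

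I expect the Type~II (bilinear) estimate to be the main obstacle. Without the factorization $d=rq$ the modulus $d^2\asymp X^\theta$ exceeds $X^{1/2}$ and the large sieve is useless; the factorization lets one large-sieve in the $q$- and $h$-aspects, and the length condition $r<X^{-\eta}T$ (slightly shorter than the $a$-sum, cf.\ the remark after Theorem~\ref{deltatheorem}) is exactly what keeps the off-diagonal contribution under control. Pushing the Type~II range, and hence the constant $0.388$, as far as possible demands a careful balance between the $q$-length $Q$, the effective range $\asymp X^{\theta-1/2}$ of the $a$-sum relative to the modulus, and the split $MN\asymp X$ of the bilinear variables; this optimization is the technical heart of Section~\ref{lowerboundsection}.
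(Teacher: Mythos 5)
Your overall skeleton does track the paper's: by positivity you restrict to moduli $d=rq$ with $q$ prime and $r$ slightly smaller than $T\asymp\sqrt{X}/d$, keep a positive proportion of the expected main term, and run Harman's sieve with arithmetic input coming only from the multiplicative large sieve. But there is a genuine gap at the technical heart, the Type II (bilinear) estimate. You expand the interval condition in additive characters $h$, write $1_{d\mid mn-1}e_{d^2}(h(mn-1))$ via the $\varphi(d)$ Dirichlet characters modulo $d^2$ lying over $\psi_h$, and then invoke the large sieve ``over $q\sim Q$ and over $h$''. This never addresses the modulus-$r^2$ component of those characters: for each fixed $r$ and $h$ there remain $\varphi(r)\asymp X^{1/2-\theta/2}$ characters of conductor dividing $r^2\asymp X^{1-\theta}$, and none of the averages you retain can produce cancellation there --- the family of primes $q$ is tiny ($q\asymp X^{\theta-1/2}$, about $X^{0.039}$ at the endpoint), while characters to moduli $r^2$ (let alone $d^2\asymp X^{\theta}$) are far beyond the range where Lemma \ref{largeprimlemma} saves anything for sums of length $N\ll X^{2-3\theta-\eta}$. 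The paper's Proposition \ref{prelitypeiiprop} avoids this by dispersion: Cauchy--Schwarz over $r$ and $m$ keeping the $q$-, $b$- and $n$-sums inside, and then in the off-diagonal term the pair of congruences $mn_i\equiv b_irq_i+1\ (r^2q_i^2)$ is rewritten as $n_1\equiv n_2\ (r)$ and $b_2q_2-b_1q_1\equiv\overline{n_1}(n_2-n_1)/r\ (r)$ together with congruences to moduli $r^2q_1^2$ and $q_2^2$; because the $b$-range $B$ exceeds $rX^{\eta}$, the $b$-average equidistributes the second condition, and only $n_1\equiv n_2\ (r)$ is expanded in Dirichlet characters, now to the small modulus $r\asymp X^{1/2-\theta/2}\ll N$, where Lemma \ref{large} plus Siegel--Walfisz gives the log-power saving. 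This reduction of the congruence from modulus $r^2$ to modulus $r$, with primality of $q$ controlling the $q_1=q_2$ diagonal, is the essential idea missing from your sketch; ``large sieve in the $q$- and $h$-aspects'' as stated does not beat the trivial bound in the required range $X^{\theta/2}\ll N\ll X^{2-3\theta-\eta}$.

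There is a second gap on the sieve side. You posit Type II information for all $X^{\tau_2}\le m\le X^{1/2}$ and assert that Harman's machinery plus an unspecified optimization yields a positive constant up to the stated threshold. In fact the provable Type II range is only the narrow window $N\in[X^{\theta/2},X^{2-3\theta-\eta}]$ (width $2-7\theta/2-\eta$, about $0.114$ at the endpoint), and the entire quantitative content of the theorem is the verification that the discarded ranges cost less than the whole main term: the paper applies Buchstab's identity twice, drops by positivity the part of $\sum S(\A^-_{pq},q)$ with $(p,q)$ in the uncovered set $\U$, bounds the resulting deficit $\sum_{(p,q)\in\U}S(\B^-_{pq},q)$ by a two-dimensional Buchstab integral (Lemma \ref{bilemma}), and computes it numerically to be $c_0<0.98$, which is what fixes the admissible range $\theta\le 0.5388$ (the ``$0.388$'' in the statement is a typo for $0.0388$); one also needs the Type I level $M\ll X^{3/2-3\theta/2-\eta}$, hence $\theta<5/9$, to run the level-$Z$ asymptotic of Proposition \ref{funlowerprop}. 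Saying that ``the constant is the outcome of optimizing $\tau_1,\tau_2$'' leaves precisely this decisive computation, and hence the exponent itself, unestablished.
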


\subsection{Almost all moduli}
Our proofs of Theorems \ref{asymptotictheorem} and \ref{lowerboundtheorem} actually give a stronger result in the sense that the bounds hold point-wise for almost all $d$ of a suitable form. Define
\begin{align*}
\P(d):=\{p \in \PP: d=d_1(p,E) \text{ for some } E/\F_p \}.
\end{align*}
Theorem \ref{leasttheorem} is then a direct corollary of the following theorem, which is proved in Section \ref{almostallsection}.
\begin{theorem} \label{almostallmodulitheorem}  Let $\epsilon>0$ be small. For all but  $O(\sqrt{\epsilon}X^{1/4+\epsilon})$ of integers $d \in [X^{1/4+\epsilon},2X^{1/4+\epsilon}]$ we have
\begin{align*}
\sum_{X< p \leq 2X} 1_{p \in \P(d)} = \sum_{\substack{|a| < 2 \sqrt{2 X} \\ a \equiv 2 \,\, (d)}} \frac{2X- a^2/4}{\varphi(d^2) \log X} +  O \bigg( \frac{ \sqrt{\epsilon} X^{3/2}}{d \varphi(d^2) \log X}  \bigg),
\end{align*}
where the implied constant is absolute.
\end{theorem}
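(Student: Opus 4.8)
The plan is to start from the identity furnished by Lemma \ref{kowalskilemma}: since $d\in[X^{1/4+\epsilon},2X^{1/4+\epsilon}]$ gives $d^2\asymp X^{1/2+2\epsilon}>4\sqrt{2X}$ for $X$ large, the integer $a$ there is unique when it exists, so
\begin{align*}
\sum_{X<p\le2X}1_{p\in\P(d)}\;=\;\sum_{\substack{|a|<2\sqrt{2X}\\ a\equiv2\,(d)}}\ \sum_{\substack{X<p\le2X\\ p\equiv a-1\,(d^2)\\ p>a^2/4}}1,
\end{align*}
which is exactly the inner double sum appearing in the derivation before Theorem \ref{deltatheorem}; moreover $a\equiv2\,(d)$ forces $\gcd(a-1,d^2)=1$, so no coprimality is lost. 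Writing $D:=X^{1/4+\epsilon}$ and $\delta:=\sqrt{2\epsilon}$, Mertens' theorem shows that the $d\in[D,2D]$ with no prime factor in $[X^{2\epsilon},X^{\delta})$ form a set of relative density $\ll\delta\ll\sqrt\epsilon$; I would discard these and, for each remaining $d$, fix a factorization $d=rq$ with $q\in[X^{2\epsilon},X^{\delta})\cap\PP$ and $(r,q)=1$, so that $d^2\in\SS_X(\delta)$ and, crucially, $r\le X^{1/4-\epsilon}$ is a little shorter than the length $\asymp\sqrt X/d$ of the $a$-sum (cf. the Remark after Theorem \ref{deltatheorem}).

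Next I would smooth the cutoffs $|a|<2\sqrt{2X}$ and $p>a^2/4$ by a smooth partition of unity, separate the $a$- and $p$-variables by a Mellin transform, and detect $p\equiv a-1\,(d^2)$ using Dirichlet characters mod $d^2$. The principal character contributes the claimed main term $\sum_{a}(2X-a^2/4)/(\varphi(d^2)\log X)$, where the arithmetic input needed is only the distribution of primes in progressions to modulus $d$ itself — well inside the Bombieri--Vinogradov range — so after removing a further exceptional set of density $\ll\log^{-A}X\ll\sqrt\epsilon$ this is as stated up to an admissible error. It then remains to bound the non-principal contribution
\begin{align*}
\mathcal E(d)\;:=\;\frac1{\varphi(d^2)}\sum_{\substack{\chi\,(d^2)\\ \chi\neq\chi_0}}\ \Big|\sum_{\substack{|a|\asymp\sqrt X\\ a\equiv2\,(d)}}\beta_a\,\bar\chi(a-1)\Big|\ \Big|\sum_{n\asymp X}\Lambda(n)\chi(n)\Big|
\end{align*}
(for a smooth sequence $\beta_a\ll1$) by $\sqrt\epsilon$ times the main term — if possible pointwise in admissible $d$, otherwise on average over $d$ and then via Chebyshev's inequality, which in either case removes at most a further $O(\sqrt\epsilon)$ proportion of $d$.

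To estimate $\mathcal E(d)$ I would apply Harman's sieve method to the indicator of primes in $(X,2X]$, obtaining a combinatorial decomposition into a bounded number of Type I sums $\sum_{m\le M_I}\alpha_m\sum_k1_{mk\asymp X}$ and Type II sums $\sum_{m\asymp M}\alpha_m\sum_{k\asymp X/M}\gamma_k1_{mk\asymp X}$ with divisor-bounded coefficients; because $d^2$ overshoots the Bombieri--Vinogradov threshold $\sqrt X$ only by the factor $X^{2\epsilon}$, a decomposition with Type I information up to $M_I=X^{1/4-O(\epsilon)}$ and Type II information for $M$ in a range around $[X^{1/4},X^{1/2}]$ suffices for an asymptotic when $\epsilon$ is small. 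In a Type I sum, $\chi(mk)=\chi(m)\chi(k)$ and the inner sum over the long variable $k$ (length $\gg X/M_I\gg d^2$) is evaluated by an elementary count of integers in the progression $mk\equiv a-1\,(d^2)$; summed over $m$, over the $\asymp\sqrt X/d$ values of $a$, and over $\chi$, the error is $\ll\sqrt\epsilon$ times the main term for $M_I$ admissible. In a Type II sum I would use $\chi=\chi_r\chi_q$ with $\chi_r\,(r^2)$ and $\chi_q\,(q^2)$ — legitimate since $(r,q)=1$ — so that each factor splits into two shorter character sums; grouping the $a$-factor with one of them, applying Cauchy--Schwarz, and invoking Parseval together with the large sieve for multiplicative characters yields the bound. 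The factorization is essential here: after peeling off the harmless small modulus $q^2\le X^{2\delta}$ the effective modulus is $r^2\le X^{1/2-2\epsilon}<\sqrt X$, and the fact that $r$ is slightly shorter than the $a$-sum provides exactly the room needed in the Cauchy--Schwarz step (as in the proof of Proposition \ref{typeiilowerprop}).

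The main obstacle will be the Type II estimate: one must beat the trivial bound by a factor of size $X^{1/4+O(\epsilon)}$ for a bilinear form in which both the prime variable and the residue variable run modulo $d^2\gg\sqrt X$, using only the large sieve for Dirichlet characters (no Weil/Deligne or Kloosterman bounds). This succeeds only because the factorization $d=rq$ lowers the effective modulus to $r^2<\sqrt X$ and because the $a$-sum is a little longer than $r$, so that after Cauchy--Schwarz the diagonal term and the large-sieve constant $\varphi(r^2)$ are both affordable; making the numerology close over the whole Type I and Type II ranges is the delicate point, and it is this balance that forces $\epsilon$ small and yields the exceptional density $O(\sqrt\epsilon)$.
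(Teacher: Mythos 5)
Your reduction matches the paper's: Lemma \ref{kowalskilemma} together with the uniqueness of $a$ when $d^2>4\sqrt{2X}$, discarding the $O(\sqrt{\epsilon})$-proportion of $d$ with no prime factor $q$ in a range like $[X^{2\epsilon},X^{\sqrt{2\epsilon}})$, and then a Harman-type decomposition with Type I information pointwise in $d$ and Type II information on average over $d$ (followed by Chebyshev). The genuine gap is in your Type II step. Expanding at the outset into Dirichlet characters modulo $d^2$, writing $\chi=\chi_r\chi_q$ and ``peeling off'' the $q$-aspect cannot work as described: taking a maximum or absolute values over the $\asymp q^2\geq X^{4\epsilon}$ characters $\chi_q$ already loses more than the entire margin $d^2/X^{1/2}$ by which the modulus exceeds the Bombieri--Vinogradov range, so the $q$-average must be exploited, not discarded. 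Moreover, even granting an ``effective modulus $r^2$'', the large sieve for the single modulus $r^2\asymp X^{1/2-O(\epsilon)}$ gives nothing: the shorter Type II variable can be as short as $X^{\theta/2}\approx X^{1/4}$, far below $\varphi(r^2)$, so the $(r^2+N)N$ bound loses a power of $X$ there, and even in the balanced case $M\approx N\approx X^{1/2}$ it returns only the trivial Parseval bound $B\cdot MN$, with no room for the $\log^C$ (let alone $\sqrt{\epsilon}$) saving that is needed. Your observation that ``$r$ is slightly shorter than the $a$-sum'' is indeed the right hypothesis ($R^{1/2}<BX^{-\eta}$ in Proposition \ref{prelitypeiiprop}), but you attach it to the wrong mechanism (a diagonal term and the constant $\varphi(r^2)$); by itself it does not lower the modulus.

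What is missing is the modulus-lowering device at the heart of Proposition \ref{prelitypeiiprop}: one applies Cauchy--Schwarz in $m$ keeping the sums over $q$, $b$ and $n$ \emph{inside}; in the off-diagonal terms $q_1\neq q_2$ the $m$-sum is evaluated exactly modulo $r^2q_1^2q_2^2$ (this is why $M\gg X^{\eta}RQ^2$ is imposed), the pair of congruences then forces only $n_1\equiv n_2\,(r)$ plus a condition on $b_1,b_2$ modulo $r$, and the average over the residue classes $a=2+bd$, of length $B>rX^{\eta}$, evaluates that condition -- this is what converts one congruence modulo $r^2q^2$ into a single congruence modulo $r$. Only at that point are multiplicative characters introduced, to the modulus $r\ll X^{1/4-\epsilon}\ll NX^{-\eta}$, and Lemma \ref{large} (which itself requires the average over $r$ and the Siegel--Walfisz hypothesis) supplies the $\log^C$ saving; the resulting estimate is an average over $d=rq$, and the almost-all statement follows by Chebyshev, as you anticipated. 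Without this step your scheme stalls exactly at the barrier. A secondary inaccuracy: even with the correct arithmetic information the Type II range has a gap near $N\approx X^{1/2}$ (cf.\ $N\ll X^{1-\theta-3\delta}$ in Proposition \ref{typeiiprop}), so no Vaughan-type identity yields a full asymptotic; the $O(\sqrt{\epsilon})$ error \emph{inside} the formula comes from bounding this gap by a Buchstab integral, while the $O(\sqrt{\epsilon})$ exceptional density comes, as you say, from the moduli without the required factorization.
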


Similarly, Theorem \ref{main2theorem} follows directly from the following. Note that the lower bound  is of the correct order of magnitude.
\begin{theorem} \label{almostall2theorem}
There exists a small constant $\eta>0$ such that for all but a proportion $O_C(\log^{-C} X)$ of integers $d \in  \D_\eta(0.2694,X^{0.2694})$ we have
\begin{align*}
\sum_{X< p \leq 2X} 1_{p \in \P(d)}\, \gg \frac{X^{3/2}}{d\varphi(d^2) \log X}.
\end{align*}
\end{theorem}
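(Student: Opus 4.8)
The plan is to establish a point-wise lower bound for almost all $d = rq \in \D_\eta(0.2694, X^{0.2694})$ by reducing, via Lemma \ref{kowalskilemma}, to counting primes $p \in (X, 2X]$ lying in progressions $p \equiv a-1 \, (\mathrm{mod}\, d^2)$ with $a \equiv 2 \, (\mathrm{mod}\, d)$ and $|a| < 2\sqrt{p}$. Writing $\theta = 2\alpha = 0.5388$, we have $d^2 \asymp X^\theta$ and the number of admissible residues $a$ is of order $\sqrt{X}/d$. The target lower bound $\gg X^{3/2}/(d\varphi(d^2)\log X)$ is exactly what one expects if primes equidistribute in each such progression, so the content is to prove this equidistribution (in an averaged-over-$a$, lower-bound-only sense) for almost all $d$ of the prescribed shape. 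First I would set up the Harman sieve: using the combinatorial decomposition for the indicator of primes, one reduces the count to bilinear (Type II) and linear (Type I) sums, where the crucial feature afforded by the factorization $d = rq$ is that $r \ll \sqrt{X/d^2}\, X^{-\eta}$ is slightly shorter than the length of the $a$-sum. This mismatch in scales is what lets us insert the extra $q$-aspect and $a$-aspect averaging and win.

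The heart of the argument will mirror the asymptotic results of Section \ref{sieveasympsection} but only needs one-sided control, so the exponent $\theta$ can be pushed up to $0.5388$ rather than $1/2 + O(\sqrt{\epsilon})$, as in Theorem \ref{lowerboundtheorem}. Concretely, I would: (i) apply Harman's sieve to split $\sum_{X < p \le 2X} 1_{p \in \P(d)}$ into a main term (arising from the well-factorable, Type-I-manageable pieces) plus error terms of Type I and Type II shape, with all sums carrying the outer averages over $a$ with $a \equiv 2 \,(d)$ and over the prime factor $q$ of $d$; (ii) handle the Type I sums by opening the progression condition with additive or multiplicative characters and executing the $a$-sum, which is longer than $r$; (iii) handle the Type II sums by Cauchy--Schwarz followed by the \emph{large sieve inequality for multiplicative characters modulo $d^2 = r^2 q^2$}, exploiting that the bilinear variables together with the $a$-average give a sum long enough that the large sieve beats the trivial bound — here is where the condition $r \ll X^{-\eta}\sqrt{X/d^2}$ is used, precisely as in the proof of Proposition \ref{typeiilowerprop}. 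Throughout, the $o(1)$ in $|G_d| = d^{4-o(1)}$ plays no role; only the arithmetic-progression count matters.

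To pass from an average statement to an almost-all statement, I would estimate the second moment
\begin{align*}
\sum_{d \in \D_\eta(\alpha, X^{0.2694})} \Bigg( \sum_{X < p \le 2X} 1_{p \in \P(d)} - M(d) \Bigg)^2,
\end{align*}
where $M(d) := \sum_{|a| < 2\sqrt{2X},\, a \equiv 2\,(d)} (2X - a^2/4)/(\varphi(d^2)\log X)$ is the expected main term, and show it is $O_C(\log^{-C} X)$ smaller than $\sum_{d} M(d)^2$. Expanding the square produces a diagonal term (giving $M(d)$-type main contributions) and off-diagonal terms controlled by large-sieve input over the modulus $d^2$ together with the extra $q$-averaging built into $\D_\eta$; the restriction of $\D_\eta$ to $d$ with $q$ prime in a dyadic window and $r$ in a short window is exactly tailored to make this variance bound go through with a power-of-log saving, yielding that the exceptional set has density $O_C(\log^{-C} X)$. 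Chebyshev's inequality then gives, for all but $O_C(\log^{-C} X)$ of $d \in \D_\eta(0.2694, X^{0.2694})$, the bound $\sum_{X < p \le 2X} 1_{p \in \P(d)} = M(d) + o(M(d)) \gg X^{3/2}/(d\varphi(d^2)\log X)$, using $\varphi(d^2) = d\varphi(d)$ and $M(d) \asymp X^{3/2}/(d\varphi(d^2)\log X)$.

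The main obstacle I anticipate is the Type II estimate at the relatively large exponent $\theta = 0.5388$: one must show that after Cauchy--Schwarz the resulting bilinear form, with ranges dictated by the Harman decomposition, is genuinely captured by the multiplicative large sieve modulo $d^2$ once the $a$-average and the coprime factor $q$ are taken into account — i.e., that the "well-factorable" gain from $d = rq$ with $r$ shorter than $\sqrt{X}/d$ is strong enough to cover the full gap between $X^{1/2}$ and $X^{0.5388}$. This is a delicate optimization of the sieve's role-reversals and the choice of the parameter $\eta$, and is the reason the constant $0.2694$ (rather than something closer to $0.25$) appears; verifying that the numerology closes, and that the same bound survives the second-moment expansion with a $\log^{-C}$ saving, is where essentially all the work lies. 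By contrast, the Type I sums, the explicit evaluation of $M(d)$, and the sieve bookkeeping are routine given the tools already set up in the earlier sections.
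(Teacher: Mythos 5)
There is a genuine gap in your passage from the average statement to the almost-all statement. You propose to bound the second moment $\sum_{d} \big( \sum_{X<p\le 2X} 1_{p \in \P(d)} - M(d) \big)^2$ and conclude via Chebyshev that $\sum_{X<p\le 2X} 1_{p\in\P(d)} = M(d) + o(M(d))$ for almost all $d \in \D_\eta(0.2694, X^{0.2694})$. That conclusion is an asymptotic formula for the prime count at $\theta = 0.5388$, which is strictly stronger than the theorem and than anything the available arithmetic information can deliver: at this exponent the Type II ranges $[X^{\theta/2},X^{\theta/2}Z]\cup[X^{1-\theta/2}Z^{-1},X^{1-\theta/2}]$ with $Z=X^{2-7\theta/2-\eta}$ leave large uncovered Buchstab regions, which is precisely why the paper must use Harman's sieve with positivity and only obtains a lower bound of about $0.02$ times the main term \emph{even after averaging over $d$} (Theorem \ref{lowerboundtheorem}). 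For the same reason the variance itself is not estimable: expanding the square requires handling the prime indicator in individual sparse progressions to the sparse square moduli $d^2$, and the only access to primes is through the combinatorial decomposition, whose uncontrolled ranges block any asymptotic evaluation (this is not a Barban--Davenport--Halberstam situation, since you are not summing over all residue classes modulo $d^2$, so orthogonality of characters is not available). So the Chebyshev step as you set it up cannot be carried out, and your earlier correct remark that only one-sided control is needed is inconsistent with the mechanism you then use.

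The paper's route avoids this entirely: the Type II estimate (Proposition \ref{prelitypeiiprop}) is already stated as an $\ell^1$ bound over the moduli, i.e.\ the absolute value of the discrepancy sits \emph{inside} the sum over $d=rq$, so Markov's inequality immediately gives the Type II asymptotic point-wise for all but a proportion $O_C(\log^{-C}X)$ of such $d$ (taking a union over the $\log^{O(1)}X$ many bilinear configurations produced by the Buchstab decomposition); the Type I estimate (Proposition \ref{prelitypeiprop}) holds for every individual $d$. One then runs the sieve argument of Section \ref{lowerboundsection} separately for each good $d$, discarding the uncontrolled ranges by positivity and using the numerical bound $c_0<0.98$ from Lemma \ref{bilemma}, which yields $\sum_{X<p\le 2X}1_{p\in\P(d)} \ge (0.02-o(1)) M(d) \gg X^{3/2}/(d\varphi(d^2)\log X)$. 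If you want to salvage your second-moment idea, it would have to be applied to the individual Type I/II pieces of the decomposition rather than to the prime count itself — at which point it reduces to what Proposition \ref{prelitypeiiprop} already provides.
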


\subsection{Outline of the paper}
The article is organized so that in Section \ref{sieveasympsection} we give the proof of the asymptotic result (Theorem \ref{asymptotictheorem}), while in Section \ref{lowerboundsection} we show the lower bound version (Theorem \ref{lowerboundtheorem}). In Section \ref{almostallsection} we show how to modify the arguments to get Theorems \ref{almostallmodulitheorem} and \ref{almostall2theorem}.

Similarly as in \cite{matomaki} and in our previous work \cite{m}, we use Harman's sieve method to detect primes (cf. Harman's book \cite{harman}). Roughly speaking, Harman's sieve method is a technique for obtaining a combinatorial decomposition for a sum $S(\A):=\sum_{p \sim X} f_p$ of primes along a given non-negative sequence $(f_n)_{n\geq 1}$. We can prove a lower bound for $S(\A)$ provided that we have asymptotic formulas for the so-called Type I and Type II sums
\begin{align*}
\sum_{\substack{m \sim M \\ n \sim N}} \alpha(m) f_{mn} \quad \text{and} \quad \sum_{\substack{m \sim M \\ n \sim N}} \alpha(m) \beta(n) f_{mn}
\end{align*}
with arbitrary bounded coefficients $\alpha(m)$ and $\beta(n)$ for $MN=X$ with $M$ and $N$ in some suitably wide ranges. The reader may be more familiar with the classical Vaughan identity which states that we can get an asymptotic formula for $S(\A)$ provided that we can compute the Type I and Type II sums in a wide range (e.g. Type I sums  with $M\leq X^{1/4}$ and Type II sums with $X^{1/4} \leq M,N \leq X^{3/4}$). The benefit of using Harman's sieve method is that we can use positivity to regard some of the ranges as error terms so that we still get lower and upper bounds for $S(\A)$ even if we are not able to handle Type II sums in all ranges (e.g. in Section \ref{sieveasympsection} we have asymptotics for Type II sums in the range $X^{1/4+O(\delta)} \ll N \ll X^{1/2-O(\delta)}$ for some small $\delta>0$ which is almost sufficient for the Vaughan identity). The exponent 0.2694 in Theorem \ref{leasttheorem} is then determined by a numerically computed upper bound for these error terms in Section \ref{lowerboundsection}.

The heart of the matter is the handling of Type I and Type II sums which is carried out in Sections \ref{typeisection} and \ref{typeiisection} (Propositions \ref{prelitypeiprop} and \ref{prelitypeiiprop}). As usual, with the Type I sums we already have a smooth summation over $n \sim N$ on which to apply Fourier analysis (Poisson summation), so that these are easily computed. 

For Type II sums we must first apply Cauchy-Schwarz to smoothen one of the coefficients. Since we are interested in results for almost all moduli $d$, it suffices to prove Type II information for almost all $d$, that is, give non-trivial bounds for sums of the form
\begin{align*}
\sum_{d \sim D}& \bigg|\sum_{\substack{m \sim M \\ n \sim N}} \alpha(m) \beta(n)( f_{mn}(d)-EMT_{mn}(d))\bigg|
\end{align*}
where $EMT_{mn}(d)$ is the expected average value of our sequence $f_{mn}(d)$ which now depends on $d$ also. Similarly as in the results on primes in arithmetic progressions to large moduli \cite{BFI,FI,polymath,zhang}, we make use of the fact that our modulus $d=rq$ factorizes suitably so that we may rearrange the sum into
\begin{align*}
\sum_{d=rq}& \bigg|\sum_{\substack{m \sim M \\ n \sim N}} \alpha(m) \beta(n)( f_{mn}(rq)-EMT_{mn}(rq))\bigg| \\
&= \sum_r \sum_{m } \alpha(m) \sum_q c_{rq} \sum_{n} \beta(n) ( f_{mn}(rq)-EMT_{mn}(rq)).
\end{align*}
for some bounded coefficients $c_{rq}$. We apply Cauchy-Schwarz with the sums over $n$ and $q$ on the inside to replace $\alpha(m) $ with $1$, where the sum over $q$ helps to bound the diagonal contribution. Expanding the square we get
\begin{align*}
\sum_r \sum_{m \sim M} \bigg| \sum_q c_{rq} \sum_{n} \beta(n) ( f_{mn}(rq)-EMT_{mn}(rq))\bigg|^2 =: W-2V+U.
\end{align*}
We then evaluate each of the sums $W,V,U$ separately to get
\begin{align*}
U,V,W=X_0(1+O_C(\log^{-C}X))
\end{align*}
 for some quantity $X_0$, so that the main terms cancel in $W-2V+U \ll_C X_0 \log^{-C}X$ (this is also known as Linnik's dispersion method). As usual, the hardest term to evaluate is
\begin{align*}
W =\sum_r \sum_{q_1,q_2} c_{rq_1}c_{rq_2} \sum_{n_1,n_2} \beta(n_1)\beta(n_2) \sum_{m \sim M} f_{mn_1}(rq_1)f_{mn_2}(rq_2).
\end{align*}
For primes in arithmetic progressions this is achieved by using the heavy machinery of sums of Kloosterman sums \cite{BFI,FI} or the Deligne/Weil bounds for algebraic exponential sums \cite{polymath,zhang}. We are reprieved by the fact that in the definition of $f_n$ we have an average over the residue classes $a < 2\sqrt{p}, \, a \equiv 2 \, \, (d)$, which allows us to rewrite $W$ in such a way that the estimation can be done simply by using the classical large sieve for multiplicative characters. It is noteworthy that our results do not rely on any of the deeper bounds, not even on the special case of Weil bound for Kloosterman sums. It should also be noted that the averaging over residue classes $a$ also allows us to control the diagonal contribution in the Cauchy-Schwarz step, so that despite our moduli running over the sparse set of squares we can handle Type II sums in a fairly wide range of $M$ and $N$ (this is in contrast to \cite[Proposition 4]{m} where the sparseness of the moduli set results in a very narrow range for the Type II sums). 

\begin{remark} The exponent $0.2694$ in Theorem \ref{leasttheorem} is not necessarily optimal. One can certainly improve this slightly by a more careful optimization of the sieve argument in Section \ref{lowerboundsection}. It is also likely that we could strengthen the arithmetical information (Propositions \ref{prelitypeiprop} and \ref{prelitypeiiprop}) that is used in the sieve (for example, by incorporating ideas from \cite{m}). We have chosen not to pursue these issues here in order to keep the arguments as simple as possible and so that our results are independent of the harder exponential sum bounds.
\end{remark}

\begin{remark} Our arguments rely heavily on sieve methods. For this we need to extend functions such as $p \mapsto |\DD_X(p;\theta)|$ from primes to all integers. It is not clear to the author if this or any of the arguments below have a natural interpretation purely in terms of elliptic curves. It would of course be very interesting if such an interpretation were to exist. It would also be interesting to see if there is an explanation of the factorization $d_1(p)=rq$ in terms of $E/\F_p$ which would explain why curves of this form appear to be easier to handle (as opposed to $d_1(p) \in \PP$, for instance).
\end{remark}

\subsection{Notations}
For functions $f$ and $g$ with $g$ positive, we write $f \ll g$ or $f= \mathcal{O}(g)$ if there is a constant $C$ such that $|f|  \leq C g.$ The notation $f \asymp g$ means $g \ll f \ll g.$ The constant may depend on some parameter, which is indicated in the subscript (e.g. $\ll_{\epsilon}$).
We write $f=o(g)$ if $f/g \to 0$ for large values of the variable. For variables we write $n \sim N$ meaning $N<n \leq 2N$. 

It is convenient for us to define
\begin{align*}
A \pprec B
\end{align*}
to mean $A \, \ll_\epsilon X^{\epsilon} B$ for any $\epsilon>0.$  A typical bound we use is $\tau_k(n) \pprec 1$  for $n \ll X$, where $\tau_k$ is the $k$-fold divisor function. We say that an arithmetic function $f$ is \emph{divisor bounded} if $|f(n)| \ll \tau_k(n)$ for some $k$.

We let $\eta >0$ denote a sufficiently small constant, which may be different from place to place. For example, $A \ll X^{-\eta}B$ means that the bound holds for some $\eta >0, $ and $A \pprec X^{-\eta}B$ is equivalent to $A \ll X^{-\eta}B$.

For a statement $E$ we denote by $1_E$ the characteristic function of that statement. For a set $A$ we use $1_A$ to denote the characteristic function of $A.$ 

We also define $P(w):= \prod_{p < w} p,$ where the product is over primes. 

We let $e(x):= e^{2 \pi i x}$ and $e_q(x):= e(x/q)$ for any integer $q \geq 1$. For integers $a,$ $b$, and $q \geq 1$ we define $e_{q}(a/b) := e(a\overline{b}/q)$ if $b$ is invertible modulo $q$, where $\overline{b}$ is the solution to $b\overline{b} \equiv 1 \,\, (q).$

\subsection{Acknowledgements}
I am grateful to my supervisor Kaisa Matom\"aki for support and comments. I am also grateful to Emmanuel Kowalski for suggesting this problem, helpful discussions as well as for hospitality during my visit at ETH Z\"urich in autumn 2018, which was supported by the Emil Aaltonen Foundation. I am also very grateful to the anonymous referees, whose suggestions have greatly clarified the paper. During the work the author was supported by a grant from the Magnus Ehrnrooth Foundation and UTUGS Graduate
School.

\section{Preliminaries}
In this section we state some standard auxiliary results. First is the classical sieve upper bound for primes in arithmetic progressions (cf. \cite[Theorem 7.15]{odc}, for instance).
\begin{lemma}\emph{\textbf{(Brun-Titchmarsh inequality).}}  \label{btlemma} We have 
\begin{align*}
\sum_{\substack{p\sim Y \\ p\equiv a \, (q)}}  1\leq (2 +o(1))\frac{Y}{\varphi(q)\log Y/q}
\end{align*}
uniformly for $q \leq Y^{1-\eta}$ and $a$.
\end{lemma}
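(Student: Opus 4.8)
The plan is to deduce this from the Selberg upper bound sieve applied to the arithmetic progression. If $(a,q) > 1$ the progression contains at most one prime, contributing $O(1)$, which is negligible since $\varphi(q) \le q \le Y^{1-\eta}$ forces the right-hand side to be $\gg Y^{\eta}/\log Y \to \infty$; so assume $(a,q)=1$. Set $\mathcal{A} := \{\, a + mq : Y < a+mq \le 2Y \,\}$, so $|\mathcal{A}| = Y/q + O(1)$, and for squarefree $d$ with $(d,q)=1$ the number of $n \in \mathcal{A}$ with $d \mid n$ is $|\mathcal{A}|/d + O(1)$; the relevant local density is $1/(\ell-1)$ at each prime $\ell \nmid q$. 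Writing $P'(z) := \prod_{\ell < z,\, \ell \nmid q} \ell$ with sifting parameter $z := (Y/q)^{1/2}(\log(Y/q))^{-2}$, one has $\#\{p\sim Y: p\equiv a\,(q)\} \le S(\mathcal{A}, P'(z)) + O(z)$, and the Selberg sieve gives
\[
S(\mathcal{A}, P'(z)) \le \frac{Y/q + O(1)}{G(z)} + O\!\Big(\sum_{d < z^2}\tau_3(d)\Big), \qquad G(z) := \sum_{\substack{d < z\\(d,q)=1}}\frac{\mu^2(d)}{\varphi(d)}.
\]

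Two standard estimates then finish the proof. First, the remainder is $\ll z^2\log^2 z \ll (Y/q)(\log(Y/q))^{-2}$, which is negligible compared with $(Y/q)/\log(Y/q)$. Second, the lower bound $G(z) \ge \frac{\varphi(q)}{q}\log z$ — obtained for instance by writing $\varphi(d)^{-1} = d^{-1}\sum_{e\mid d}\mu^2(e)\varphi(e)^{-1}$ for squarefree $d$ and bounding the resulting double sum from below, or by comparison with $\sum_{d\le z,\,(d,q)=1}\mu^2(d)/d$ — together with $\log z = (\tfrac12+o(1))\log(Y/q)$ (uniformly in $q$, since $q \le Y^{1-\eta}$ makes $Y/q\to\infty$), gives $G(z) \ge \frac{\varphi(q)}{q}(\tfrac12+o(1))\log(Y/q)$. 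Substituting,
\[
\#\{p \sim Y : p \equiv a\ (q)\} \le (2+o(1))\,\frac{Y}{\varphi(q)\log(Y/q)},
\]
uniformly in $q \le Y^{1-\eta}$ and $a$, as claimed.

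The one point requiring care is uniformity in $q$: one must follow the $q$-dependence through the sieve so that the leading constant is genuinely $2+o(1)$ and not merely $2+o_q(1)$, and one must verify that $z$ can be taken large enough that $\log z \sim \tfrac12\log(Y/q)$ while the remainder term remains negligible. Both rely on the hypothesis $q \le Y^{1-\eta}$, which guarantees $Y/q\to\infty$. (Alternatively, the Montgomery--Vaughan form of the large sieve yields the sharper inequality with leading constant exactly $2$ and no $o(1)$; for our purposes the weaker statement above suffices.)
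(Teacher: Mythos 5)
Your proof is correct. The paper itself states this lemma as a classical result without giving a proof, so there is nothing to compare against beyond the standard literature; your Selberg-sieve derivation (density $1/\ell$ at primes $\ell \nmid q$, sifting level $z=(Y/q)^{1/2}(\log (Y/q))^{-2}$, the lower bound $G(z)\geq \tfrac{\varphi(q)}{q}\log z$, and the remainder bound $\ll z^2\log^2 z$) is exactly the standard argument yielding the constant $2+o(1)$ uniformly for $q\leq Y^{1-\eta}$, and you correctly flag that the Montgomery--Vaughan large-sieve version gives the constant $2$ outright.
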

We also need the following simple upper bound for smooth numbers (cf.  \cite[Chapter III.5]{Ten}, for example).
\begin{lemma} \label{smoothlemma} For any $2\leq Z \leq Y$ we have
\begin{align*}
\sum_{\substack{n \sim Y }} 1_{P^+(n) < Z} \ll Y \exp(- \log Y/2\log Z).
\end{align*}
\end{lemma}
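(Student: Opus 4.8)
The plan is to derive this from the standard Rankin upper bound for smooth numbers, following \cite[Chapter III.5]{Ten}. Writing $\Psi(2Y,Z):=\#\{n\le 2Y:\ P^+(n)<Z\}$ and $u:=\log Y/\log Z$, it suffices to bound $\Psi(2Y,Z)$, since the sum in question is at most this. Rankin's trick gives, for every $\sigma\in(0,1)$,
\[
\Psi(2Y,Z)\ \le\ \sum_{\substack{n\ge 1\\ P^+(n)<Z}}\Bigl(\tfrac{2Y}{n}\Bigr)^{\sigma}\ =\ (2Y)^{\sigma}\prod_{p<Z}\bigl(1-p^{-\sigma}\bigr)^{-1}.
\]

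First I would dispose of the easy ranges. If $Z$ is bounded then the number of $Z$-smooth integers up to $2Y$ is $\ll_Z(\log Y)^{O(1)}$, which is far smaller than $Y\exp(-\tfrac{\log Y}{2\log Z})=Y^{1-1/(2\log Z)}$; and if $u\le C_0$ for a suitable absolute constant $C_0$ (i.e.\ $Z$ is at least a fixed power of $Y$) then $\exp(-\tfrac{\log Y}{2\log Z})=e^{-u/2}$ is bounded below, so the trivial bound $\Psi(2Y,Z)\le 2Y$ already suffices. For the main range $Z\ge 3$, $u\ge C_0$ I would take $\sigma=1-\tfrac{1}{\log Z}$: then $(2Y)^{\sigma}\ll Y\exp(-\tfrac{\log Y}{\log Z})$, while $p^{1-\sigma}\le Z^{1-\sigma}=e$ for $p<Z$, so by Mertens' theorem $\sum_{p<Z}p^{-\sigma}\le e\sum_{p<Z}p^{-1}\le e\log\log Z+O(1)$ and therefore $\prod_{p<Z}(1-p^{-\sigma})^{-1}\ll(\log Z)^{O(1)}$. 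This yields $\Psi(2Y,Z)\ll Y\exp(-\tfrac{\log Y}{\log Z})(\log Z)^{O(1)}$, and the spare factor $\exp(-\tfrac{\log Y}{2\log Z})$ absorbs $(\log Z)^{O(1)}$ whenever $u\ge C_1\log\log Z$ for an absolute $C_1$.

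What remains is the intermediate band $C_0\le u\le C_1\log\log Z$, where the crude choice of $\sigma$ loses the factor $(\log Z)^{O(1)}$; there I would instead invoke Rankin's inequality with the optimal $\sigma$ (the one minimising $\sigma\log(2Y)+\log\prod_{p<Z}(1-p^{-\sigma})^{-1}$), equivalently the de Bruijn-type estimate $\Psi(2Y,Z)\ll Y\exp(-u\log u+O(u))$ valid when $u$ is a small power of $\log Z$ (again \cite[Chapter III.5]{Ten}), whose exponent is $\le -u/2$ once $C_0$ is taken large enough. Combining the three cases gives the lemma. I do not expect any real difficulty here — this is a textbook estimate — the only point that needs a little care being that the implied constant stays absolute across the whole range $2\le Z\le Y$; in particular near $Z=Y$ the left-hand side is $\asymp Y$ while $\exp(-\tfrac{\log Y}{2\log Z})=e^{-1/2}$, which is why the statement is phrased with $\ll$ rather than with an explicit constant $1$.
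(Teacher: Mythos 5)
The paper offers no proof of this lemma beyond citing the standard smooth-number estimate in Tenenbaum, Chapter III.5, and your argument — Rankin's trick with $\sigma=1-1/\log Z$, trivial treatment of the ranges $Z=O(1)$ and $u=O(1)$, and the de Bruijn-type bound $\Psi(x,Z)\ll x\exp(-u\log u+O(u))$ to absorb the $(\log Z)^{O(1)}$ loss in the intermediate range $u\ll\log\log Z$ — is exactly the standard treatment from that reference. The proposal is correct and essentially the same approach as the paper's (cited) proof.
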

To compute smoothly weighted sums over arithmetic progressions we need the Poisson summation formula.
\begin{lemma}\emph{\textbf{(Truncated Poisson summation formula).}} \label{poisson}
Let $\psi:\R\to  \C$ be a fixed $C^\infty$-smooth compactly supported function with $\|\psi\|_{1} \leq 1$ and let $N \gg 1$. Fix a real number $x=x(N)$. Let $q \geq 1$ be an integer. Then for any $\epsilon > 0$
\begin{align*}
\sum_{n \equiv a \, (q)} \psi\bigg(\frac{n-x}{N}\bigg) = \frac{1}{q} \sum_{n} \psi\bigg(\frac{n-x}{N}\bigg)+ \frac{N}{q} \sum_{1 \leq |h| \leq N^\epsilon q/N} c_h e_q (-ah) + O_{C,\epsilon}(N^{-C}),
\end{align*}
where $c_h=c_{h,q,\psi,x,N}$ are complex coefficients satisfying $|c_h| \leq 1$.
\end{lemma}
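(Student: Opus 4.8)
The plan is to reduce the left-hand side to an exact Poisson summation over a single variable and then truncate the resulting dual sum using the rapid decay of $\widehat{\psi}$. First I would parametrize the residue class by writing $n=a+mq$ with $m$ ranging over $\Z$, so that
\begin{align*}
\sum_{n \equiv a \, (q)} \psi\left(\frac{n-x}{N}\right) = \sum_{m \in \Z} g(m), \qquad g(t):= \psi\left(\frac{a+tq-x}{N}\right).
\end{align*}
Since $\psi$ is fixed, $C^\infty$, and compactly supported, $g$ is a Schwartz function, so the Poisson summation formula $\sum_{m\in\Z} g(m)=\sum_{h\in\Z}\widehat g(h)$ holds with absolute convergence on both sides. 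The change of variables $u=(a+tq-x)/N$ then gives
\begin{align*}
\widehat g(h) = \int_\R g(t)\, e(-ht)\,dt = \frac{N}{q}\, e\!\left(\frac{h(a-x)}{q}\right)\widehat\psi\!\left(\frac{hN}{q}\right).
\end{align*}

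Next I would isolate the term $h=0$, namely $\widehat g(0)=\frac{N}{q}\widehat\psi(0)=\frac{N}{q}\int_\R \psi$. Applying Poisson summation a second time to $\sum_n \psi\big((n-x)/N\big)$ and using that $\psi \in C_c^\infty$ forces $|\widehat\psi(\xi)|\ll_C (1+|\xi|)^{-C}$ for every $C$ (integration by parts), one obtains $\sum_n \psi\big((n-x)/N\big)=N\widehat\psi(0)+O_C(N^{1-C})$; dividing by $q\geq 1$ shows $\widehat g(0)=\frac1q\sum_n \psi\big((n-x)/N\big)+O_C(N^{-C})$, which furnishes the stated main term. For the terms with $h\neq 0$ I would reindex $h\mapsto -h$ and set $c_h:=e(hx/q)\,\widehat\psi(-hN/q)$, so that $\widehat g(-h)=\frac{N}{q}\,c_h\, e_q(-ah)$ and $|c_h|=|\widehat\psi(-hN/q)|\leq \|\psi\|_1\leq 1$ by hypothesis; this produces exactly the coefficients appearing in the statement.

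Finally I would truncate the dual sum at $|h|\leq N^\epsilon q/N$. On the tail $|h|>N^\epsilon q/N$ we have $|hN/q|>N^\epsilon\geq 1$, so by the rapid decay of $\widehat\psi$,
\begin{align*}
\sum_{|h| > N^\epsilon q/N} |\widehat g(h)| \ll_C \frac{N}{q}\left(\frac{N}{q}\right)^{-C} \sum_{|h| > N^\epsilon q/N} |h|^{-C} \ll_C \frac{N}{q}\left(\frac{N}{q}\right)^{-C}\left(\frac{N^\epsilon q}{N}\right)^{1-C} = N^{\epsilon(1-C)},
\end{align*}
which is $\ll_{C',\epsilon} N^{-C'}$ once $C$ is taken large enough in terms of $C'$ and $\epsilon$. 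Collecting the $h=0$ contribution, the truncated sum, and this tail estimate yields the claimed identity. There is no genuine obstacle here — the argument is entirely standard — and the only point needing a little care is that the truncation error is uniform in $q\geq 1$: this holds because the decay of $\widehat g(h)$ is governed by the variable $hN/q$ and the cutoff $N^\epsilon q/N$ is chosen precisely so that $|hN/q|>N^\epsilon$ throughout the tail, causing the powers of $q$ to cancel.
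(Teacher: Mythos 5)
Your proof is correct and follows essentially the same route as the paper: Poisson summation over the residue class $n=a+mq$, a change of variables giving $\frac{N}{q}\,e(hx/q)\widehat\psi(hN/q)$ (i.e.\ the coefficients $c_h$ with $|c_h|\leq\|\psi\|_1\leq 1$), and rapid decay of $\widehat\psi$ (integration by parts) to truncate at $|h|\leq N^\epsilon q/N$ uniformly in $q$. The only difference is cosmetic: you justify explicitly that the $h=0$ term $\frac{N}{q}\widehat\psi(0)$ equals $\frac{1}{q}\sum_n\psi((n-x)/N)$ up to $O_C(N^{-C})$ via a second application of Poisson summation, a step the paper leaves implicit.
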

\begin{proof}
By the usual Poisson summation formula
\begin{align*}
\sum_{n \equiv a \, (q)} \psi\bigg(\frac{n-x}{N}\bigg) = \sum_{m} \psi\bigg(\frac{mq+a-x}{N}\bigg) = \sum_{h} \int \psi\bigg(\frac{uq+a-x}{N}\bigg)e(hu) d u.
\end{align*}
Making the change of variables 
\begin{align*}
u \mapsto Nu/q -a/q+x/q
\end{align*}
we get
\begin{align} \label{poissonprelim}
\sum_{n \equiv a \, (q)} \psi\bigg(\frac{n-x}{N}\bigg) = \frac{N}{q} \sum_{h} c_h e_q (-ah),
\end{align}
where
\begin{align*}
c_{h} := e(hx/q)\int \psi(u) e(huN/q) d u.
\end{align*}
For all $h$ we have the trivial estimate $|c_{h}| \leq \|\psi\|_{1} \leq 1$. For $|h| > N^{\epsilon}q/N$ we can iterate integration by parts to show that the contribution from this part is $\ll_{C,\epsilon} N^{-C}.$ As usual, $h=0$ gives us the main term by applying (\ref{poissonprelim}) with $q=1$, noting that all terms except $h=0$ give a negligible contribution.
\end{proof}

Our proof of the Type II estimate relies on the classical large sieve inequality (cf. \cite[Theorem 9.10]{odc}, for instance).
\begin{lemma} \label{largeprimlemma}\emph{\textbf{(Large sieve inequality for multiplicative characters).}}  For any sequence $c_n$ of complex numbers and for any $M,N \geq 1$ we have
\begin{align*}
\sum_{q \leq Q} \frac{q}{\varphi(q)} \sideset{}{'} \sum_{\chi \, \, (q)} \bigg | \sum_{M< n \leq M+N} c_n \chi(n)\bigg |^2 \, \leq \, (Q^2+N) \sum_n |c_n|^2,
\end{align*}
where the sum over $\chi$ is over primitive Dirichlet characters.
\end{lemma}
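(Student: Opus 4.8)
The plan is to reduce the statement to the classical additive (analytic) large sieve inequality, which I would invoke as the single substantive input: if $\alpha_1,\dots,\alpha_R\in\R/\Z$ are $\delta$-spaced, meaning $\|\alpha_i-\alpha_j\|\geq\delta$ whenever $i\neq j$, then for any complex sequence $(c_n)$ one has
\begin{align*}
\sum_{r\leq R}\Big|\sum_{M<n\leq M+N}c_n\, e(n\alpha_r)\Big|^2\ \leq\ (\delta^{-1}+N)\sum_n|c_n|^2.
\end{align*}
I would not reprove this here: by the duality principle it is equivalent to a bilinear estimate whose diagonal contributes the term $N$ and whose off-diagonal part is controlled by a Hilbert-type inequality of Montgomery--Vaughan, the clean constant $\delta^{-1}$ arising from Selberg's extremal majorant/minorant construction. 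Throughout, write $S(\alpha):=\sum_{M<n\leq M+N}c_n\, e(n\alpha)$.

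\emph{Passage to additive characters.} For $q>1$ and $\chi$ primitive modulo $q$, the Gauss sum identity $\sum_{a\,(q)}\overline{\chi}(a)\,e(an/q)=\chi(n)\,\tau(\overline{\chi})$ --- valid for \emph{all} $n$ precisely because $\chi$ is primitive --- together with $|\tau(\overline{\chi})|=\sqrt q$, yields $\sum_n c_n\chi(n)=\tau(\overline{\chi})^{-1}\sum_{(a,q)=1}\overline{\chi}(a)\,S(a/q)$, and hence $\frac{q}{\varphi(q)}|\sum_n c_n\chi(n)|^2=\frac{1}{\varphi(q)}|\sum_{(a,q)=1}\overline{\chi}(a)\,S(a/q)|^2$ (the case $q=1$ being trivial). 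Since including the imprimitive characters only adds nonnegative terms, I would then bound $\sideset{}{'}\sum_{\chi\,(q)}$ by $\sum_{\chi\,(q)}$, expand the square, and apply orthogonality $\sum_{\chi\,(q)}\overline{\chi}(a)\chi(b)=\varphi(q)\,1_{a\equiv b\,(q)}$ for $(ab,q)=1$; the off-diagonal terms drop out and, summing over $q\leq Q$, one is left with
\begin{align*}
\sum_{q\leq Q}\frac{q}{\varphi(q)}\sideset{}{'}\sum_{\chi\,(q)}\Big|\sum_n c_n\chi(n)\Big|^2\ \leq\ \sum_{q\leq Q}\ \sum_{\substack{a\,(q)\\ (a,q)=1}}\Big|S\big(\tfrac{a}{q}\big)\Big|^2.
\end{align*}

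\emph{Farey spacing and conclusion.} The fractions $a/q$ with $q\leq Q$ and $(a,q)=1$ form the Farey points of order $Q$, and two distinct such points differ by at least $\tfrac{1}{qq'}\geq Q^{-2}$ modulo $1$, so they are $Q^{-2}$-spaced; feeding this into the additive large sieve with $\delta^{-1}=Q^2$ bounds the right-hand side above by $(Q^2+N)\sum_n|c_n|^2$, as claimed. The only genuinely hard ingredient is the additive inequality with the sharp constant $Q^2+N$ --- everything else (Gauss sums, character orthogonality, Farey dissection) is routine --- and if one were willing to lose an absolute constant, even that becomes elementary via duality and the elementary Hilbert inequality, which would in any case be more than sufficient for the use made of this lemma in the sequel.
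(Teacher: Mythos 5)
Your argument is correct: the Gauss-sum identity for primitive characters, extension to all characters by positivity, orthogonality to reach $\sum_{q\leq Q}\sum_{(a,q)=1}|S(a/q)|^2$, and the $Q^{-2}$-spacing of Farey fractions fed into the additive large sieve give exactly the stated bound. The paper does not prove this lemma at all — it simply cites it as a standard result — and what you have written is precisely the classical proof found in the cited literature, so there is no substantive difference to compare.
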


\begin{definition}\label{swdefinition}\emph{\textbf{(Siegel-Walfisz condition)}} We say that a divisor-bounded arithmetic function $\beta(n)$ satisfies the Siegel-Walfisz condition if for all integers $r,q\geq 1$ with $r\leq N$ and $q = \log^{O(1)} N$ and non-principal characters $\chi$ modulo $q$ we have
\begin{align*}
\sum_{\substack{n \leq N \\ (n,r)=1}} \beta(n)\chi(n) \ll_C \frac{N}{\log^C N}
\end{align*}
for any $C >0.$
\end{definition}
By standard technique Lemma \ref{largeprimlemma} gives us the more practical version where $\chi$ runs over all non-principal characters (cf. \cite[Chapter 9.8]{odc}, for instance).
\begin{lemma} \label{large} Let $N,Q\gg 1$ and let $\beta(n)$  satisfy the Siegel-Walfisz condition. Then
\begin{align*}
\sum_{q \sim Q}  \frac{1}{\varphi(q)}\sum_{\chi  \neq \chi_0 \, \, (q)} \bigg | \sum_{n\sim N} \beta(n) \chi(n)\bigg |^2 \, \ll_C \bigg(Q+\frac{N}{\log^C N} \bigg) N \log^{O(1)}( NQ)
\end{align*}
for any $C>0.$
\end{lemma}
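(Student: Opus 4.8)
The plan is to run the standard reduction from the large sieve for primitive characters (Lemma~\ref{largeprimlemma}) to all non-principal characters, splitting the sum according to the size of the conductor. Every non-principal $\chi \bmod q$ is induced by a unique primitive character $\chi^*$ modulo some $q^* \mid q$ with $q^* \geq 2$; fix a parameter $R = \log^{B} N$, with $B$ to be chosen large in terms of $C$, and separate the characters with $q^* \leq R$ from those with $q^* > R$. Before that, I would dispose of the range $Q \geq N$: expanding the square and using orthogonality, the left-hand side is $\leq \sum_{q\sim Q}\sum_{\substack{n_1,n_2\sim N \\ n_1\equiv n_2\,(q)}} \beta(n_1)\overline{\beta(n_2)}$, and $q \geq N > |n_1-n_2|$ forces $n_1=n_2$, so this is $\ll \sum_{q\sim Q}\sum_{n\sim N}|\beta(n)|^2 \ll QN\log^{O(1)}N$, which is admissible. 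Hence we may assume $Q < N$; in particular every $q\sim Q$ satisfies $q \leq N$ and $\log(NQ)\asymp\log N$.

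For the small-conductor characters ($q^*\leq R$) I would quote the Siegel--Walfisz hypothesis. Since $\sum_{n\sim N}\beta(n)\chi(n) = \sum_{n\sim N,\,(n,q)=1}\beta(n)\chi^*(n)$, with $\chi^*$ non-principal modulo $q^* = \log^{O(1)}N$ and coprimality modulus $r=q\leq N$, Definition~\ref{swdefinition} (applied at scales $N$ and $2N$) gives $\sum_{n\sim N}\beta(n)\chi(n) \ll_{C'} N\log^{-C'}N$ for any $C'$. The number of characters $\chi\bmod q$ with $2\leq\operatorname{cond}(\chi)\leq R$ is $\ll R^2$ for each $q$, and $\sum_{q\sim Q}1/\varphi(q)\ll\log^{O(1)}(NQ)$, so this range contributes $\ll R^2 N^2\log^{O(1)}(NQ)\log^{-2C'}N$, which is $\ll N^2\log^{-C}N$ provided $C'$ is taken large enough in terms of $B$ and $C$.

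For the large-conductor characters ($q^*>R$) I would reduce to primitive characters and invoke Lemma~\ref{largeprimlemma}. Writing $q=q^*k$ we have $\chi(n)=\chi^*(n)1_{(n,k)=1}$; removing the coprimality by M\"obius gives $\sum_{n\sim N}\beta(n)\chi(n) = \sum_{e\mid k}\mu(e)\chi^*(e)\sum_{m\sim N/e}\beta(em)\chi^*(m)$, and Cauchy--Schwarz over the $\tau(k)$ divisors $e$ of $k$ yields $|\sum_{n\sim N}\beta(n)\chi(n)|^2 \leq \tau(k)\sum_{e\mid k}|\sum_{m\sim N/e}\beta(em)\chi^*(m)|^2$. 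Substituting this, writing $k=ef$, using $\tau(ef)/\varphi(q^*ef) \leq \frac{1}{q^*}\cdot\frac{q^*}{\varphi(q^*)}\cdot\frac{\tau(e)}{\varphi(e)}\cdot\frac{\tau(f)}{\varphi(f)}$, and summing the harmless factor $\sum_f \tau(f)/\varphi(f)\ll\log^{O(1)}(NQ)$, the large-conductor part is bounded by $\log^{O(1)}(NQ)\sum_e\frac{\tau(e)}{\varphi(e)}\sum_{q^*>R}\frac{1}{q^*}\cdot\frac{q^*}{\varphi(q^*)}\sideset{}{'}\sum_{\chi\,(q^*)}|\sum_{m\sim N/e}\beta(em)\chi(m)|^2$. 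Decomposing $q^*\sim Q^*$ dyadically over $R<Q^*\leq 2Q$ so that $1/q^*\asymp1/Q^*$ and applying Lemma~\ref{largeprimlemma} with length $N/e$ and modulus range $Q^*$ produces $\frac{1}{Q^*}\big((Q^*)^2+N/e\big)\sum_{m\sim N/e}|\beta(em)|^2 \ll \big(Q^*+\tfrac{N}{eQ^*}\big)\tau_k(e)^2\tfrac{N}{e}\log^{O(1)}N$. Summing the geometric series $\sum_{Q^*>R}(Q^*+\tfrac{N}{eQ^*}) \ll Q + \tfrac{N}{eR}$ --- here the restriction $Q^*>R$ is precisely what puts the saving $1/R$ on the $N/e$ term --- and then summing the convergent series $\sum_e \tau(e)\tau_k(e)^2/(e\varphi(e)) < \infty$ gives the bound $\big(QN + N^2/R\big)\log^{O(1)}(NQ)$ for this range. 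Taking $B$ large enough relative to $C$ and the ambient $\log^{O(1)}$-exponents finishes the proof.

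I expect the only real difficulty to be the bookkeeping: tracking the M\"obius/Cauchy--Schwarz removal of the coprimality condition, the repeated use of $\varphi(ab)\geq\varphi(a)\varphi(b)$ and $\tau(ab)\leq\tau(a)\tau(b)$, and --- most importantly --- arranging the dyadic decomposition of the conductor so that the cutoff $q^*>R$ translates into a genuine $\log^{-C}$ saving on the ``diagonal'' term $N^2$ (the term that, via the Siegel--Walfisz input, also forces us to peel off the small conductors) rather than on the already-acceptable term $QN$. No deeper input is needed: only Lemma~\ref{largeprimlemma}, the Siegel--Walfisz hypothesis, the trivial orthogonality bound, and standard estimates for divisor sums.
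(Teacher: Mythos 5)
Your argument is correct and is exactly the standard conductor-splitting reduction (trivial diagonal bound for $Q\geq N$, Siegel--Walfisz for conductors $\leq\log^{B}N$, M\"obius/Cauchy--Schwarz to pass to primitive characters and then the large sieve of Lemma \ref{largeprimlemma} dyadically in the conductor) that the paper itself invokes without proof via the reference to ``standard technique''; the only quibbles are harmless bookkeeping conventions (e.g.\ applying Definition \ref{swdefinition} at scale $2N$ when $q\in(N,2Q]$), which do not affect the proof.
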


\subsection{Buchstab integrals}
For the sieve arguments we need a lemma for transforming sums over almost-primes into integrals which can be evaluated numerically. Let $\omega(u)$ denote the Buchstab function (cf. \cite[Chapter 1]{harman} for the definition and the properties below, for instance), so that by the Prime Number Theorem for $Y^{\epsilon} < z < Y$ 
\begin{align} \label{buchasymp}
\sum_{Y< n \leq 2Y} 1_{(n,P(z))=1} = (1+o(1)) \omega \left(\frac{\log Y}{\log z} \right) \frac{Y}{\log z}.
\end{align}
In the numerical computations we will use the following upper bound for the Buchstab function (cf. \cite[Lemma 5]{hbjia})
\begin{align*}
\omega(u) \, \leq \begin{cases} 0, &u < 1 \\
1/u, & 1 \leq u < 2 \\
(1+\log(u-1))/u, &2 \leq u < 3 \\
0.5644, &  3 \leq u < 4 \\
0.5617, & u \geq 4.
\end{cases}
\end{align*}
For the lemma below we assume that the range $\,\mathcal{U}\subset [X^{\eta},X]^{k}$ is sufficiently well-behaved (e.g. an intersection of sets of the type $\{ \boldsymbol{u}: u_i < u_j \}$ or $\{\boldsymbol{u}: V <  f(u_1, \dots,u_k) < W\}$ for some polynomial $f$ and some fixed $V,W$). This condition will be clear from the context everywhere below. In short, we need to be able to pass from a sum over primes in $\mathcal{U}$ to  a sum over integers (by the Prime number theorem), and from the sum over integers to the corresponding integral.

\begin{lemma} \label{bilemma} Let $\mathcal{U} \subset [X^{\eta},X]^{k}$ be a sufficiently well-behaved set and such that for all $(u_1,\dots,u_k) \in \U$ we have $u_1\cdots u_k < X^{1-\eta}$. Then for any fixed integer $k \leq X$ we have
\begin{align*}
\sum_{(p_1, \dots , p_k) \in \mathcal{U}} \sum_{\substack{q \sim X/p_1\cdots p_k }}  1_{(q,P(p_k))=1} 1_{(q,k)=1} = (1+o(1))\frac{X}{\log X} \int \omega (\boldsymbol{\alpha }) \frac{d\alpha_1 \cdots d\alpha_k}{\alpha_1\cdots\alpha_{k-1}\alpha_k^2},
\end{align*}
where the integral is over the range 
\begin{align*}
\{\boldsymbol{\alpha}: \, (X^{\alpha_1}, \dots, X^{\alpha_k}) \in \mathcal{U}\}
\end{align*}
 and $\omega(\boldsymbol{\alpha})= \omega(\alpha_1,\dots,\alpha_k):= \omega((1-\alpha_1-\cdots 
 -\alpha_k)/\alpha_k)$.
\end{lemma}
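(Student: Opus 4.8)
The plan is to reduce the multi-dimensional sum over primes $(p_1,\dots,p_k)\in\U$ to a product of one-dimensional pieces and then replace each $1/\log$-type density by its Buchstab-function analogue. First I would observe that the inner sum over $q$ with the condition $(q,P(p_k))=1$ is, by \eqref{buchasymp}, asymptotically $(1+o(1))\,\omega\big(\tfrac{\log(X/p_1\cdots p_k)}{\log p_k}\big)\tfrac{X/p_1\cdots p_k}{\log p_k}$, uniformly for the relevant ranges of $p_1,\dots,p_k$; here one uses that each $p_i \geq X^{\delta}$ so $\log p_k \asymp \log X$, and that $p_1\cdots p_k < X^{1-\eta}$ so the argument of $\omega$ stays bounded below away from $0$ (in fact $\geq \eta/\log p_k \cdot \log X$-ish, but more importantly bounded so $\omega$ is continuous and bounded there). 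The uniformity in \eqref{buchasymp} needs a small amount of care: one wants it to hold with a single $o(1)$ as $X\to\infty$ for all $z=p_k\in[X^\delta,X^{1-\eta}]$ and all $Y=X/p_1\cdots p_k$ in a dyadically-free range, which follows from the standard fundamental-lemma/PNT-based proof of \eqref{buchasymp}, or by first splitting the $p_i$ into $O(\log^{O(1)}X)$ dyadic-type blocks on which the quantities are essentially constant.

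Next I would substitute this asymptotic and be left with estimating
\begin{align*}
\frac{X}{\log X}\sum_{(p_1,\dots,p_k)\in\U} \frac{\log X}{p_1\cdots p_k \,\log p_k}\,\omega\!\left(\frac{\log(X/p_1\cdots p_k)}{\log p_k}\right).
\end{align*}
The idea is to recognize this as a Riemann sum for the claimed integral under the change of variables $p_i = X^{\alpha_i}$. To make this rigorous, I would dissect the range of each $\log p_i/\log X$ into intervals of length $1/\log X$ (equivalently group the $p_i$ into blocks $p_i\in(e^{(j-1)},e^{j}]$ or finer), on each block approximate $\omega(\cdots)$ and the smooth factors $1/\log p_k$ by constants (using that $\omega$ is Lipschitz on compact subsets of $(1,\infty)$ away from the singularities at $1,2$, or just that $\omega\in L^\infty$ and is continuous a.e., so the measure of the bad set near the breakpoints is negligible), and invoke Mertens' theorem $\sum_{p\in(P,2P]} 1/p = \log 2 + o(1)$ — more precisely $\sum_{y<p\leq z}1/p = \log(\log z/\log y)+O(1/\log y)$ — to convert the sum over each $p_i$-block into the correct logarithmic measure $d\alpha_i/\alpha_i$; the extra $1/\alpha_k$ (making $d\alpha_k/\alpha_k^2$) comes from the $X/p_1\cdots p_k$ and $1/\log p_k$ factors combining with $1/p_k$ as $X^{-\alpha_k}\cdot \alpha_k^{-1}\cdot$ the measure. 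Summing the block contributions and letting $X\to\infty$ gives convergence of the Riemann sum to the stated integral over $\{\boldsymbol\alpha: (X^{\alpha_1},\dots,X^{\alpha_k})\in\U\}$, provided $\U$ is ``sufficiently nice'' so that its indicator, pulled back to $\alpha$-space, has boundary of measure zero — this is exactly what the hypothesis on $\U$ (intersections of polynomial inequalities) guarantees.

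I expect the main obstacle to be the bookkeeping of \emph{uniformity}: one must ensure the $o(1)$ in \eqref{buchasymp} is genuinely uniform over the whole polytope-like region $\U$ (including where some $p_i$ is close to $X^\delta$ and where $p_1\cdots p_k$ is close to $X^{1-\eta}$, so $Y$ is as small as $X^\eta$), and simultaneously that the discretization error from replacing sums by integrals is $o(1)$ after summing over all $O(\log^{O(1)}X)$ blocks. Both are standard but slightly delicate; the cleanest route is probably to first reduce to the case where all $p_i$ lie in fixed dyadic (or finer geometric) intervals via a preliminary decomposition into $\pprec 1$ many pieces, handle each piece with a single clean application of \eqref{buchasymp} and Mertens, and then reassemble, noting that the per-piece error $o(X/\log X)$ times $\pprec 1$ pieces is still $o(X/\log X)$ after summing — here one does have to be a touch careful that ``$o(1)$'' multiplied by $X^\epsilon$ pieces is not automatically $o(1)$, so one instead keeps explicit power-saving or $\log$-power-saving error terms from the PNT in \eqref{buchasymp}. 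Everything else — the change of variables, the identification of the weight $1/(\alpha_1\cdots\alpha_{k-1}\alpha_k^2)$, and the definition $\omega(\boldsymbol\alpha)=\omega((1-\alpha_1-\cdots-\alpha_k)/\alpha_k)$ — is then a direct computation.
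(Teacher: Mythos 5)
Your proposal is correct and takes essentially the same route as the paper: apply the Buchstab asymptotic (\ref{buchasymp}) to the inner sum over $q$, then convert the remaining sum over $(p_1,\dots,p_k)\in\U$ into the stated integral by PNT-strength prime counting and the substitution $p_i=X^{\alpha_i}$. The only cosmetic difference is that the paper first replaces the prime sums by integer sums weighted with $1/\log n_i$ before passing to the integral, while you go directly to a Riemann sum in $\alpha$-space via block decomposition and Mertens-type estimates; your remark that one should keep PNT-quality error terms (the bare $O(1/\log y)$ Mertens error being borderline on blocks of width $1/\log X$) is exactly the right precaution.
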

\begin{proof}
Since $(q,P(X^\eta))=1$, we may drop the condition $(q,k)=1$ with a negligible error term.  By (\ref{buchasymp}) and by the Prime Number Theorem, the left-hand side is
\begin{align*}
& (1+o(1))X \sum_{(p_1, \dots , p_k) \in \mathcal{U}} \frac{1}{p_1\cdots p_k \log p_k} \omega \left( \frac{\log(X/(p_1\cdots p_k))}{\log p_k} \right) \\
&= (1+o(1))X  \hspace{-5pt}  \sum_{(n_1,\dots,n_k ) \in \mathcal{U}} \frac{1}{n_1\cdots n_k (\log n_1) \dots (\log n_{k-1} )\log^2 n_k} \omega \left( \frac{\log(X /(n_1\cdots n_k))}{\log n_k} \right) \\
&=(1+o(1))X  \int_{\mathcal{U}}  \omega \left( \frac{\log(X/(u_1\cdots u_k))}{\log u_k} \right)   \frac{du_1\cdots du_k}{u_1\cdots u_k (\log u_1) \dots (\log u_{k-1} )\log^2 u_k}\\
&= (1+o(1)) \frac{X}{\log X}  \int \omega (\boldsymbol{\alpha }) \frac{d\alpha_1 \cdots d\alpha_k}{\alpha_1\cdots\alpha_{k-1}\alpha_k^2}
\end{align*}
by the change of variables $u_j=X^{\alpha_j}$.
\end{proof}

\subsection{Finer-than-dyadic decomposition} \label{ftdsection} Here we explain a common device for removal of cross-conditions that does not contaminate coefficients (unlike applying the Mellin transformation). The idea goes back at least to the work of Fouvry and Iwaniec \cite{FI}. As an example, suppose we want a bound of the form
\begin{align} \label{ftdbound}
\sum_{\substack{m \sim M \\ n \sim N \\ m <yn}} a_{m,n}\ll \delta MN
\end{align}
for some $y>0$ and some bounded coefficients $a_{m,n}$ (here $\delta$ is usually either $(MN)^{-\eta}$ or $(\log MN)^{-C}$). To do this we would like to remove the cross-condition $m<yn$, that is, bound the original sum by a linear combination of sums of the same form but without $m<yn$. A standard trick to achieve this without affecting the coefficients $a_{m,n}$ is the so called finer-than dyadic decomposition. As the name suggests, we split the sums over $m$ and $n$ into short intervals to obtain $ \ll \delta^{-2}$ sums of the form
\begin{align*} 
\sum_{\substack{m \in [M_0,(1+\delta)M_0] \\ n \in [N_0,(1+\delta)N_0] \\ m <yn}} a_{m,n}. 
\end{align*}
for $M_0=(1+\delta)^j M \sim M$, $N_0=(1+\delta)^{k} N \sim N$, where $j,k$ are  restricted to
\begin{align*}
(1+\delta)^j M \leq y(1+\delta)^{k+1} N.
\end{align*} 
The condition $m<yn$ is then trivially satisfied except for the boundary cases when
\begin{align*}
y(1+\delta)^{k-1} N \leq (1+\delta)^j M \leq y(1+\delta)^{k+1} N.
\end{align*}
Since there are at most $\ll \delta^{-1}$ of such pairs $j,k$, the contribution from these is trivially bounded by $\delta^{-1}\cdot \delta M \cdot \delta N \ll \delta MN$, as required. Hence, (\ref{ftdbound}) holds if we can show
\begin{align*}
\sum_{\substack{m \in [M_0,(1+\delta)M_0] \\ n \in [N_0,(1+\delta)N_0]}} a_{m,n} \ll \delta^3 MN
\end{align*}
for all $M_0 \asymp M$, $N_0 \asymp N$. By subtracting it suffices to show that
\begin{align*}
\sum_{\substack{m \in [\delta^2 M_1,M_1] \\ n \in [\delta^2 N_1,N_1]}} a_{m,n} \ll \delta^3 MN
\end{align*}
for all $M_1 \asymp M$, $N_1 \asymp N$ (consider this with $(M_1,N_1)$ equal to $(M_0,N_0)$ and $((1+\delta)M_0,(1+\delta)N_0)$ and subtract these to get four sums, three of which are trivial to bound). Splitting these sums, dyadically we need to show
\begin{align*}
\sum_{\substack{m \sim M_2 \\ n \sim N_2 }} a_{m,n} \ll \delta^3 (\log^{-2} 1/\delta) MN
\end{align*}
for all $\delta^2 M \ll M_2 \ll M$, $\delta^2 N \ll N_2 \ll N$, which is of the same form as the original sum but without the cross-condition. In practice we will always have $\delta \gg X^{-\eta}$ and our ranges of  $M$ and $N$ are defined up to factors of $X^\eta$, so that it suffices to show
\begin{align*}
\sum_{\substack{m \sim M \\ n \sim N }} a_{m,n} \ll \delta^3 (\log^{-2} 1/\delta) MN.
\end{align*}
for $M$ and $N$ essentially in the same range as before.

\section{Type I sums} \label{typeisection}
Before the sieve argument we need to gather arithmetic information. We have the following asymptotic formula for the relevant Type I sums. Here the variable $b$ is such that the variable $a$ in the above corresponds to $bd+2$, so that $a \equiv 2 \, (d)$. Note that we let $D=X^\theta$ denote the size of the modulus $d^2$, so that comparing to Theorem \ref{main2theorem} we have $D=D_1^2$.
\begin{prop} \label{prelitypeiprop} Let $D=X^\theta$ for some $\theta \in [1/2, 3/5)$ and let $d^2 \asymp D$. Let $MN\asymp X$ with $M \ll X^{3/2-3\theta/2-\eta}$. Suppose that $B \in [X^{1/2-\eta}D^{-1/2},4 X^{1/2}D^{-1/2}].$  Let $\psi_N(n):=\psi((n-x)/N)$ for some compactly supported $C^{\infty}$-smooth $\psi$ with $\|\psi\|_1 \leq 1$, and some fixed real number $x$ which may depend on $M,N$, and $X$. Then for any divisor bounded $\alpha(m)$ we have
\begin{align*}
 \sum_{b \sim B} \sum_{m \sim M}\alpha(m)  \sum_{\substack{n \\ mn \equiv bd+1 \, \, (d^2)}} \psi_{N}(n) = \frac{1}{\varphi(d^2)}\sum_{b \sim B} \sum_{(m,d^2)=1} \alpha(m) \sum_{(n,d^2)=1} \psi_{N}(n)+  O \bigg( \frac{BX^{1-\eta}}{d^2}\bigg).
\end{align*}

\end{prop}

 \begin{proof}
 We first split the sum over $n$ on the right-hand side into primitive residue classes modulo $d$ and evaluate the sum over $n$ (note that $(n,d^2)=1$ is equivalent to $(n,d)=1$ and we have $N>X^{\eta} d$), which yields
 \begin{align*}
 \frac{1}{\varphi(d^2)}\sum_{b \sim B} \sum_{(m,d^2)=1} \alpha(m) \sum_{(n,d^2)=1} \psi_{N}(n)=\frac{1}{d^2}\sum_{b \sim B} \sum_{(m,d^2)=1} \alpha(m) \sum_{n} \psi_{N}(n) +  O \bigg(\frac{BX^{1-\eta}}{d^2}\bigg).
 \end{align*}
By applying truncated Poisson summation (Lemma \ref{poisson}) to the left-hand side in the claim, we get a matching main term with an error term
 \begin{align*}
 \ll_\epsilon 1+ X^{\epsilon} \hat{\Sigma}(M):=& 1+ \frac{X^{\epsilon}}{H} \sum_{1 \leq |h| \leq H}c_h \sum_{b \sim B} \sum_{(m,d)=1} \alpha(m)e_{d^2} \bigg(-\frac{h(bd+1)}{m} \bigg),
 \end{align*}
where $H:=X^{\epsilon} d^2/N$ and $c_h$ are some bounded complex coefficients. By rearranging and using the Cauchy-Schwarz inequality
\begin{align*}
\hat{\Sigma}(M) &\pprec M^{1/2} \bigg(\sum_{\substack{m \sim M\\(m,d^2)=1}} \bigg|\frac{1}{H}\sum_h\sum_b c_h e_{d^2}\bigg(-\frac{h(bd+1)}{m} \bigg)\bigg|^2 \bigg)^{1/2} \\
&\leq M^{1/2} \bigg( \bigg(1+ \frac{M}{d^2} \bigg)\sum_{\substack{m \,\, (d^2)\\(m,d^2)=1}} \bigg|\frac{1}{H}\sum_h\sum_b c_h e_{d^2} \bigg(-\frac{h(bd+1)}{m} \bigg)\bigg|^2 \bigg)^{1/2} \\
&\leq  M^{1/2} \bigg(\bigg(1+ \frac{M}{d^2} \bigg)\frac{1}{H^2} \sum_{h_1,h_2} \sum_{b_1,b_2} |S(h_1,h_2,b_1,b_2)| \bigg)^{1/2}
\end{align*}
where
\begin{align*}
S(h_1,h_2,b_1,b_2) := \sum_{\substack{m \,\, (d^2)\\(m,d^2)=1}}& e_{d^2} \bigg(\frac{h_1(b_1d+1)-h_2(b_2d+1)}{m} \bigg)
\\ &\pprec (h_1(b_1d+1)-h_2(b_2d+1),d^2)
\end{align*}
by the elementary bound for Ramanujan sums. We have
\begin{align*}
\sum_{h_1,h_2} \sum_{b_1,b_2} (h_1(b_1d+1)-h_2(b_2d+1),d^2) = \sum_{e|d} e \sum_{\substack{h_1,h_2,b_1,b_2 \\ (h_1-h_2,d)=e}} ((h_1(b_1d+1)-h_2(b_2d+1))/e,e) 
\end{align*}
by defining $e=(d,h_1-h_2)$, since $(h_1(b_1d+1)-h_2(b_2d+1),d)=(h_1-h_2,d)$. Write $h_1=h$ and $h_2=h+\ell e$, $|\ell| \ll L:= H/e$. We now bound separately the contribution from the parts $\ell=0$ (where $e=d$) and $\ell \neq 0$ (in which case $(\ell,d/e)=1).$ 

The contribution from $\ell =0$  is bounded by (separating $b_1=b_2$ and $b_1\neq b_2$)
\begin{align*}
&d  \sum_h \sum_{b_1,b_2} ( h(b_1-b_2),d) \leq d  \sum_{h}(h,d) \sum_{b_1,b_2} (b_1-b_2,d) \\
& \pprec d HB(d+B) \pprec HB^2 d + HBd^2.
\end{align*}

The sum  over $\ell \neq 0, (\ell,d/e)=1$ has $(f,d/e)=1$ if we denote $f:=(\ell + h(b_1-b_2)d/e,e)$. Hence, the contribution from this part bounded by
\begin{align*}
\sum_{e|d} e \sum_{\substack{0<|\ell| \ll L \\ (\ell,d/e)=1}} \sum_h \sum_{b_1,b_2} &(\ell + h(b_1-b_2)d/e,e) \leq \sum_{\substack{f|e|d \\ (f,d/e)=1}} ef \sum_{\substack{0<|\ell| \ll L \\ (\ell,d/e)=1}}  \sum_{\substack{b_1,b_2}} \sum_h 1_{\ell + h(b_1-b_2)d/e \equiv 0 \,\,(f)} \\
&\leq \sum_{\substack{f|e|d \\ (f,d/e)=1}} ef \sum_{\substack{0<|\ell| \ll L \\ (\ell,d/e)=1}} (\ell,f) \sum_{\substack{b_1,b_2}} (H/f+1) \\
&\pprec B^2\sum_{\substack{f|e|d }}  ef L (H/f+1)
\pprec   H^2B^2 + HB^2 d.
\end{align*}

Combining the contributions from $\ell=0$ and $\ell \neq 0$ we get
\begin{align*}
\sum_{h_1,h_2} \sum_{b_1,b_2} |S(h_1,h_2,b_1,b_2)| \pprec H^2B^2 + HB^2 d + HBd^2,
\end{align*}
so that we have
\begin{align*}
\hat{\Sigma}(M) &\pprec  M^{1/2} \bigg(\bigg(1+ \frac{M}{d^2} \bigg)\frac{1}{H^2} \sum_{h_1,h_2} \sum_{b_1,b_2} |S(h_1,h_2,b_1,b_2)| \bigg)^{1/2} \\
& \pprec M^{1/2} \bigg(\bigg(1+ \frac{M}{d^2} \bigg)\frac{1}{H^2} (H^2B^2 + HB^2 d + HBd^2 )\bigg)^{1/2} \\
&\pprec M^{1/2} B + \frac{M B}{d} + \frac{BM^{1/2} d^{1/2} }{H^{1/2}} +\frac{BM  }{ d^{1/2} H^{1/2}} + \frac{M^{1/2}B^{1/2} d}{H^{1/2}} + \frac{M B^{1/2}}{H^{1/2}} \\
&\pprec  M^{1/2} B + \frac{M B}{d} + \frac{BX^{1/2}  }{d^{1/2}} + \frac{BM^{1/2} X^{1/2}  }{ d^{3/2} } + X^{1/2}B^{1/2} + \frac{M^{1/2} X^{1/2} B^{1/2}}{d} 
\end{align*}
since $H \gg d^2/N$. The last expression is $\ll X^{3/2-3\theta/2-\eta} \ll BX^{1-\eta}/d^2$ provided that
\begin{align*}
\begin{cases} M \ll X^{2-2\theta-\eta}, \\
M\ll X^{1-\theta/2-\eta}, \\
D \ll X^{2/3-\eta}, \\
M \ll X^{1-\theta/2-\eta}, \\
D \ll X^{3/5-\eta}, \quad \quad  \text{and} \\
M \ll X^{3/2-3\theta/2- \eta}.
\end{cases}
\end{align*} 
Here the last term dominates for $\theta\geq 1/2$ so that we get a sufficient bound if $\theta < 3/5$ and $M < X^{3/2-3/2\theta -\eta}$.
 \end{proof}
 
 \begin{remark} \label{typeifactremark} The last two terms correspond to the diagonal contribution from $(h_1,b_1)=(h_2,b_2)$. Hence, we could obtain a longer range for $M$ if we were to use a suitable factorization $d=rq$ and keep a sum over $q$ inside in the Cauchy-Schwarz step to mollify the diagonal contribution (cf. proof of \cite[Proposition 5]{m}, for example). We could also improve on the requirement $\theta < 3/5$ by not completing the sum over $m$ at the Cauchy-Schwarz step but instead using Poisson summation to complete the smoothed sum over $m \sim M$ (note that for $\theta<3/5$ we have $\theta<3/2-3\theta/2$) but then we would have to rely on the Weil bound for Kloosterman sums. Another possible improvement in the arithmetic information would be to show a mixed Type I/II estimate in some range (cf. \cite[Proposition 3]{m}, for example).
 \end{remark}
\section{Type II sums}  \label{typeiisection}
For the sieve we will also need  Type II information, which is given by the following. In contrast to the previous section, here we will need the average over the moduli as well as the assumption that the moduli have a factorisation of a suitably form $d^2=r^2 q^2$.
\begin{prop} \label{prelitypeiiprop} Let $\theta \geq 1/2$, $R,Q\geq 1,$ and $RQ \asymp D=X^\theta$. Suppose that $B \in [X^{1/2-\eta}D^{-1/2},4 X^{1/2}D^{-1/2}],$ and suppose that $R^{1/2} < B X^{-\eta}$. Let $MN=X$ satisfy $X^{\theta/2} \ll N \ll X^{1-\eta} R^{-1}Q^{-2}.$ Then for any divisor bounded functions $\alpha, \beta$ with $\beta$ satisfying the Siegel-Walfisz condition we have
\begin{align*}
\sum_{\substack{r^2 \sim R \\ q^2 \sim Q \\ (r,q)=1 \\ q \in \PP}} \bigg| \sum_{b \sim B} \bigg(\sum_{\substack{m \sim M \\ n \sim N \\ mn \equiv bd+1 \, (r^2q^2) }} \alpha(m) \beta(n) - \frac{1}{\varphi(r^2q^2)}\sum_{\substack{m \sim M \\ n \sim N \\ (mn,r^2q^2)=1  }} \alpha(m) \beta(n) \bigg)\bigg| \ll_C \frac{BX}{\sqrt{D} \log^C X}.
\end{align*}
\end{prop}

\begin{proof}
We use Linnik's dispersion method, that is, first we use Cauchy-Schwarz to smoothen the coefficient $\alpha(m)$. After this we evaluate the smooth sum over $m$ directly and use the large sieve to bound the sum over $n$. The average over $b$ will be key, both to control the diagonal contribution when we Cauchy-Schwarz and so that we can `split' one congruence modulo $r^2$ into two separate congruences modulo $r$ before applying the large sieve (cf. evaluation of $W_0$ below).

The left-hand side is (for some $c_{r,q} \in \{0,1,-1\}$ supported on $(r,q)=1$ and $q \in \PP$)
\begin{align} \nonumber
&\sum_{r^2 \sim R} \sum_m \alpha(m) \bigg( \sum_{q^2 \sim Q} c_{r,q} \sum_{b\sim B} \bigg( \sum_{\substack{n \\ mn \equiv brq+1 \, \, (r^2q^2)}} \beta(n) - \frac{1}{\varphi(r^2q^2)} \sum_{\substack{n \\(mn,r^2q^2)=1}}  \beta(n) \bigg) \bigg) \\ \nonumber
&\ll R^{1/4} M^{1/2} (\log^{O(1)}X) \\ \nonumber &  \hspace{10pt} \cdot \bigg( \sum_{r^2 \sim R} \sum_{m\sim M} \bigg| \sum_{q^2 \sim Q} c_{r,q} \sum_{b\sim B} \bigg( \sum_{\substack{n \\ mn \equiv brq+1 \, \, (r^2q^2)}} \beta(n) - \frac{1}{\varphi(r^2q^2)} \sum_{\substack{n \\(mn,r^2q^2)=1}}  \beta(n) \bigg) \bigg|^2 \bigg)^{1/2} \\ \label{dispersionbound}
& \hspace{20pt}=: R^{1/4} M^{1/2} (\log^{O(1)}X) (W-2V + U)^{1/2}
\end{align}
by Cauchy-Schwarz and expanding the square. We will show that each of $W,V,U$ is equal to
\begin{align}  \label{target}
X_0 + O_C\bigg( \frac{MN^2B^2}{R^{3/2}Q \log^C X}\bigg)
\end{align}
for a certain quantity $X_0$ (cf. evaluation of $U_0$ below). Plugging this into (\ref{dispersionbound}) the main terms cancel and we get a bound
\begin{align*}
\ll_C R^{1/4} M^{1/2} (\log^{O(1)}X) \bigg( \frac{MN^2B^2}{R^{3/2}Q \log^C X}\bigg)^{1/2} \ll_C \frac{BX}{\sqrt{D} \log^{C/2-O(1)} X}
\end{align*}
which is sufficient once we take $C$ large enough.

\subsection*{Evaluation of $U$.} We have
\begin{align*}
U &= \sum_{r^2 \sim R} \sum_{q_1^2, \,  q_2^2 \sim Q} c_{r,q_1} c_{r,q_2} \bigg( \sum_{b \sim B} 1 \bigg)^2 \bigg( \sum_{(n_1,rq_1)=1} \beta(n_1) \bigg) \bigg( \sum_{(n_2,rq_2)=1} \beta(n_2) \bigg) \\
& \hspace{200pt}\cdot \frac{1}{\varphi(r^2)^2\varphi(q_1^2) \varphi(q_2^2)} \sum_{\substack{m \sim M \\ (m,rq_1q_2)=1}} 1 \\
& =: U_0 +U_1,
\end{align*}
where $U_0$ is the part with $q_1 \neq q_2$ and $U_1$ has $q_1=q_2$ (recall that $q_1,q_2$ are primes). Splitting the sum over $(m,rq_1q_2)=1$ into congruence classes $m \equiv t \, (rq_1q_2)$ for $(t,rq_1q_2)=1$ and summing over $m \sim M$ and $t$ we obtain (using $\varphi(r^2)=r \varphi(r)$)
\begin{align*}
U_0 &=  \sum_{r^2 \sim R} \sum_{\substack{q_1^2,  q_2^2 \sim Q\\ q_1\neq q_2}} c_{r,q_1} c_{r,q_2} \bigg( \sum_{b \sim B} 1 \bigg)^2 \bigg( \sum_{(n_1,rq_1)=1} \beta(n_1) \bigg) \bigg( \sum_{(n_2,rq_2)=1} \beta(n_2) \bigg) \frac{M}{\varphi(r)^2r^2q_1^2q_2^2} + \mathcal{O} (E) \\& =: X_0 + O (E),
\end{align*}
where 
\begin{align*}
E = \sum_{r^2 \sim R} \sum_{\substack{q_1^2,   q_2^2 \sim Q\\ q_1\neq q_2}} \bigg( \sum_{b \sim B} 1 \bigg)^2 \bigg( \sum_{(n_1,rq_1)=1} |\beta(n_1)| \bigg) \bigg( \sum_{(n_2,rq_2)=1} |\beta(n_2)| \bigg) \frac{1}{\varphi(r)^2 r q_1 q_2} 
 \pprec \frac{N^2B^2}{R}
\end{align*}
is sufficiently small in view of (\ref{target}). For $U_1$ we get by a trivial bound
\begin{align*}
|U_1| \, \pprec  MN^2 B^2 \sum_{r^2 \sim R} \sum_{\substack{q^2 \sim Q}}  \frac{1}{\varphi(r)^2r^2\varphi(q)^2 q^2} \pprec \frac{M N^2 B^2}{R^{3/2} Q^{3/2}}
\end{align*}
which is more than sufficient.

\subsection*{Evaluation of $V$.} We have
\begin{align*}
V &= \sum_{r^2 \sim R} \sum_{q_1^2, \,  q_2^2 \sim Q} c_{r,q_1} c_{r,q_2} \sum_{b_1,b_2 \sim B}   \sum_{\substack{n_1,n_2 \\ (n_1,rq_1)=1 \\ (n_2,rq_2)=1 }} \beta(n_1)\beta(n_2) \frac{1}{\varphi(r^2)\varphi(q_2^2)} \hspace{-25pt} \sum_{\substack{m \sim M \\ (m,q_2)=1 \\ m \equiv \overline{n_1}(b_1rq_1+1) \, \,(r^2q_1^2)}} \hspace{-25pt}  1  \\
& = X_0 + O \bigg( \frac{MN^2B^2}{R^{3/2}Q X^{\eta}}\bigg)
\end{align*}
by a similar argument as with $U$ (write $V=V_0+V_1$ with $V_1$ corresponding to the part $q_1=q_2$, in $V_0$ expand $(m,q_2)=1$ into $m \equiv t \, (q_2),$ and sum over $m$ and $t$ using $M>X^\eta r^2 q_1^2 q_2^2$).

\subsection*{Evaluation of $W$.} 
We have
\begin{align*}
W=\sum_{r^2 \sim R} \sum_{q_1^2, \,  q_2^2 \sim Q} c_{r,q_1} c_{r,q_2} \sum_{b_1,b_2 \sim B}   \sum_{\substack{n_1,n_2 \\ (n_1,rq_1)=1 \\ (n_2,rq_2)=1 }} \beta(n_1)\beta(n_2) \hspace{-25pt} \sum_{\substack{m \sim M \\  m n_1 \equiv b_1rq_1+1 \, \,(r^2q_1^2) \\  m n_2 \equiv b_2rq_2+1 \, \,(r^2q_2^2)}} \hspace{-25pt}  1  
\end{align*}
Again, write $W=W_0+ W_1$ where $W_1$ has $q_1=q_2$, so that
\begin{align*}
W_1 = \sum_{r^2 \sim R} \sum_{q^2\sim Q} c_{r,q}^2 \sum_{b_1,b_2 \sim B}   \sum_{\substack{n_1,n_2 \\ (n_1,rq)=1 \\ (n_2,rq)=1 }} \beta(n_1)\beta(n_2) \hspace{-25pt} \sum_{\substack{m \sim M \\  m n_1 \equiv b_1rq+1 \, \,(r^2q^2) \\  m n_2 \equiv b_2rq+1 \, \,(r^2q^2)}} \hspace{-25pt}  1 
\end{align*}
From the congruences we get
\begin{align*}
m(n_2-n_1) \equiv rq(b_2-b_1) \quad (r^2q^2),
\end{align*}
which gives us $n_2 \equiv n_1 \, \, (rq)$ (since $mn_j \equiv b_jrq+1 \, \,(r^2q^2)$ implies $(m,rq)=1)$). If $\ell := (n_2-n_1)/(rq),$ then
\begin{align*}
(b_2-b_1)rq \equiv m(n_2-n_1) \equiv m \ell rq \quad (r^2q^2),
\end{align*}
which implies $b_2-b_1 \equiv m \ell \equiv \overline{n_1} \ell \, \, (rq)$. Hence, using $M> r^2q^2, N> rq, B < rq$ we get
\begin{align*}
W_1 &\pprec \sum_{r^2 \sim R} \sum_{q^2\sim Q}     \sum_{\substack{n_1,n_2 \sim N \\ n_2 \equiv n_1 \, \, (rq) }} \, \sum_{\substack{b_1,b_2 \sim B \\ b_2 \equiv b_1+ \ell \overline{n_1} \,\, (rq)}}   \, \sum_{\substack{m \sim M \\  m n_1 \equiv b_1rq+1 \, \,(r^2q^2)}} \hspace{-25pt}  1 \\
& \ll   \sum_{r^2 \sim R} \sum_{q^2\sim Q}  N\bigg(\frac{N}{rq} +1\bigg) B \bigg( \frac{B}{rq}+1\bigg)\bigg( \frac{M}{r^2q^2}+1\bigg) \\
& \ll MN^2 B \sum_{r^2 \sim R} \sum_{q^2\sim Q} \frac{1}{r^3q^3}  \ll \frac{MN^2 B}{RQ} \ll \frac{MN^2B^2}{R^{3/2} Q X^{\eta}}.
\end{align*}
by using $\sqrt{R} \ll BX^{-\eta}$ to get the last bound.

The main term is
\begin{align*}
W_0 =\sum_{r^2 \sim R} \sum_{\substack{q_1^2, \,  q_2^2 \sim Q \\ q_1 \neq q_2 }} c_{r,q_1} c_{r,q_2} \sum_{b_1,b_2 \sim B}   \sum_{\substack{n_1,n_2 \\ (n_1,rq_1)=1 \\ (n_2,rq_2)=1 }} \beta(n_1)\beta(n_2) \hspace{-25pt} \sum_{\substack{m \sim M \\  m n_1 \equiv b_1rq_1+1 \, \,(r^2q_1^2) \\  m n_2 \equiv b_2rq_2+1 \, \,(r^2q_2^2)}} \hspace{-25pt}  1.
\end{align*}
Similarly as with $W_1$, the congruences imply that $n_2 \equiv n_1 \, (r),$ and if $\ell := (n_2-n_1)/r,$ then
\begin{align*}
b_2q_2 -b_1q_1 \equiv  m \ell \equiv\overline{n_1} \ell \,\, (r).
\end{align*}
In fact, the congruences for $mn_1$ and $mn_2$ in the sum over $m$ are equivalent to
\begin{align*}
\begin{cases} & n_2 \equiv n_1 \quad (r), \\
&  b_2q_2 -b_1q_1 \equiv  \overline{n_1} \ell \quad (r), \\
& m n_1 \equiv b_1rq_1+1 \quad (r^2q_1^2), \quad \text{and} \\ 
& m n_2 \equiv b_2rq_2+1 \quad (q_2^2).
 \end{cases}
\end{align*} 
Hence, 
\begin{align*}
W_0 & = \sum_{r^2 \sim R} \sum_{\substack{q_1^2, \,  q_2^2 \sim Q \\ q_1 \neq q_2 }} c_{r,q_1} c_{r,q_2}    \sum_{\substack{n_1,n_2 \\ n_2\equiv n_1 \, \, (r) \\ (n_1,rq_1)=1 \\ (n_2,rq_2)=1 }}  \beta(n_1)\beta(n_2) \sum_{\substack{b_1,b_2 \sim B \\  b_2q_2 -b_1q_1 \equiv  \overline{n_1} \ell \, \, (r) }} \, \sum_{\substack{m \sim M \\  m n_1 \equiv b_1rq_1+1 \, \,(r^2q_1^2) \\  m n_2 \equiv b_2rq_2+1 \, \,(q_2^2)}} \hspace{-25pt}  1 \\
& = \sum_{r^2 \sim R} \sum_{\substack{q_1^2, \,  q_2^2 \sim Q \\ q_1 \neq q_2 }} c_{r,q_1} c_{r,q_2}    \sum_{\substack{n_1,n_2 \\ n_2\equiv n_1 \, \, (r) \\ (n_1,rq_1)=1 \\ (n_2,rq_2)=1 }}  \beta(n_1)\beta(n_2) \sum_{\substack{b_1,b_2 \sim B \\  b_2q_2 -b_1q_1 \equiv  \overline{n_1} \ell \, \, (r) }} \bigg( \frac{M}{r^2q_1^2 q_2^2} + O(1) \bigg).
\end{align*}
Here the error term from the $O(1)$ is negligible, since $M > X^{\eta} r^2q_1^2q_2^2$ and $N,B > r.$ For the main term, since $(r,q_j)=1$ and $B \gg r X^{\eta},$ we get
\begin{align*}
W_0 = M \sum_{r^2 \sim R} \sum_{\substack{q_1^2, \,  q_2^2 \sim Q \\ q_1 \neq q_2 }} c_{r,q_1} c_{r,q_2}     \frac{1}{r^3 q_1^2 q_2^2} \bigg( \sum_{b \sim B} 1 \bigg)^2 \sum_{\substack{n_1,n_2 \\ n_2\equiv n_1 \, \, (r) \\ (n_1,rq_1)=1 \\ (n_2,rq_2)=1 }}  \beta(n_1)\beta(n_2)+ O\bigg( \frac{MN^2B^2}{R^{3/2}Q X^{\eta}}\bigg).
\end{align*}
We expand the congruence $n_2 \equiv n_1 \, \, (r)$ using Dirichlet characters modulo $r$; the principal character gives us exactly $X_0,$ so that we need to bound the error term
\begin{align*}
S =   \frac{M B^2}{R^{3/2}Q^2}\sum_{\substack{q_1, \,  q_2 \sim \sqrt{Q} \\ q_1 \neq q_2 }}      \sum_{r \sim \sqrt{R}} \frac{1}{\varphi(r)} \sum_{\substack{\chi \, \, (r)\\ \chi \neq \chi_0}} \bigg | \sum_{\substack{n_1 \\  (n_1,rq_1)=1 }} \beta(n_1) \chi(n_1)  \bigg |  \bigg | \sum_{\substack{n_2 \\  (n_2,rq_2)=1 }} \beta(n_2) \overline{\chi(n_2)}  \bigg |.
\end{align*}
Since $\beta$ satisfies the Siegel-Walfisz condition, we obtain by Cauchy-Schwarz and Lemma \ref{large}
\begin{align*}
  \sum_{r \sim \sqrt{R}} \frac{1}{\varphi(r)} \sum_{\substack{\chi \, \, (r)\\ \chi \neq \chi_0}} &\bigg | \sum_{\substack{n_1 \\  (n_1,rq_1)=1 }} \beta(n_1) \chi(n_1)  \bigg | \bigg | \sum_{\substack{n_2 \\  (n_2,rq_2)=1 }} \beta(n_2) \overline{\chi(n_2)}  \bigg | 
 \\
 &\ll_C   \bigg( \sqrt{R} + \frac{N}{\log^C X } \bigg) N \log^{O(1)} X \, \ll_C \frac{N^2}{\log^{C-O(1)} X }
\end{align*}
since $N \gg \sqrt{D} = \sqrt{RQ} \gg X^\eta \sqrt{R}$. Thus,
\begin{align*}
S \ll_C \frac{MN^2B^2}{R^{3/2}Q \log^C X}.
\end{align*} 
\end{proof}

\section{The sieve argument for the asymptotic result} \label{sieveasympsection}
\subsection{Reduction to factorable moduli}
To get a handle on Theorem \ref{asymptotictheorem}, we require that $d$ factorizes suitably. More precisely, for any small $\delta >0$ we define $\DD_X(p;\theta,\delta) := \DD_X(p,\theta) \cap \SS_X(\delta),$ where
\begin{align*}
\SS_X(\delta):=\{ r^2q^2: \,\, q \in [X^{\delta^2},X^{\delta})\cap \PP, \,\, (r,q)=1 \}.
\end{align*}
We can relate $|\DD_X(p;\theta)|$ to $|\DD_X(p;\theta, \delta)|$ as follows. By Lemma \ref{kowalskilemma} and by the Brun-Titchmarsh inequality (Lemma \ref{btlemma}) we have
\begin{align*}
\sum_{X < p \leq 2X} |\DD_X(p;\theta)| &= \sum_{X < p \leq 2X} \sum_{|a| < 2 \sqrt{p}} \sum_{\substack{X^\theta < d^2 \leq 2 X^\theta \\ d | a-2 \\ d^2 | p+1-a}} 1 = \sum_{X^\theta < d^2 \leq 2 X^\theta} \sum_{\substack{|a| < 2 \sqrt{2X} \\ a \equiv 2 \,\, (d)}} \sum_{\substack{X < p \leq 2 X \\ p \equiv a-1 \, (d^2) \\ p > a^2/4}} 1 \\
& = \sum_{X < p \leq 2X} |\DD_X(p;\theta,\delta)| + \sum_{ \substack{X^\theta < d^2 \leq 2 X^\theta \\ d^2 \notin \SS_X(\delta) }} \sum_{\substack{|a| < 2 \sqrt{2X} \\ a \equiv 2 \,\, (d)}} \sum_{\substack{X < p \leq 2 X \\ p \equiv a-1 \, (d^2) \\ p > a^2/4}} 1 \\
& =\sum_{X < p \leq 2X} |\DD_X(p;\theta,\delta)| + O\bigg(\sum_{ \substack{X^\theta < d^2 \leq 2 X^\theta \\ (d,P(X^{\delta^2},X^{\delta}))=1 }} \frac{X^{3/2}}{ d \varphi(d^2) \log X}  \bigg),
\end{align*}
where $P(y,z) := \prod_{y \leq p < z} p.$ The proportion of integers which have no prime factors in $[y,z]$ is by a standard sieve bound $\ll \log z / \log y.$ Hence, using $\varphi(d^2)=d \varphi(d)$ and $\sum_{r \sim R} 1/\varphi(r) \ll 1$ we get
\begin{align*}
\sum_{X < p \leq 2X} | \DD_X(p;\theta)|  = \sum_{X < p \leq 2X} |\DD_X(p;\theta,\delta)| + O\bigg( \frac{ \delta X^{3/2-\theta}}{\log X} \bigg),
\end{align*}
so that the error term is smaller than the expected main term by a factor of $\delta.$ Hence, by choosing $\delta^2 = 2 \epsilon = 2(\theta-1/2),$ Theorem \ref{asymptotictheorem} follows from the following result which we will prove below.
\begin{theorem} \label{deltatheorem} For $\theta=1/2+\epsilon$ and $\delta^2 = 2 \epsilon$ we have
\begin{align*}
\sum_{X < p \leq 2X} |\DD_X(p;\theta,\delta)| = \sum_{X^\theta < d^2 \leq 2 X^\theta} \sum_{\substack{|a| < 2 \sqrt{2X} \\ a \equiv 2 \,\, (d)}} \frac{2X- a^2/4}{\varphi(d^2) \log X} + O \bigg( \frac{ \sqrt{\epsilon} X^{3/2-\theta}}{\log X}  \bigg),
\end{align*}
where the implied constant is absolute.
\end{theorem} 

\begin{remark} Let $D:= X^\theta$.  The length of the sum over $|a| < 2 \sqrt{2 X}, \, a \equiv 2 \, (d)$ is of order $\sqrt{X/D}.$ For $d=rq$ with $q \in [X^{\delta^2},X^\delta],$ it is crucial for us that
\begin{align*}
r =d/q \ll \sqrt{D} X^{-2 \epsilon} = X^{-\epsilon} \sqrt{X/D}
\end{align*}
is slightly shorter than the length of the sum over $a$ (cf. proof of Proposition \ref{typeiilowerprop}).
\end{remark}
\subsection{The arithmetic information}
\label{aisection}
In the next section we give the proof of Theorem \ref{deltatheorem}.  In this section we state the required arithmetic information.

We extend the function $p \mapsto |\DD_X(p;\theta,\delta)|$ to all integers by defining
\begin{align*}
|\DD_X(n; \theta, \delta)| :=  \sum_{|a| < 2 \sqrt{n}} \sum_{\substack{X^\theta < d^2 \leq 2 X^\theta \\ d | a-2 \\ d^2 | n+1-a}} 1_{d^2 \in \SS_X(\delta)}.
\end{align*} 
We denote by $M_X(n; \theta, \delta)$ the expected average value of this function, that is, 
\begin{align*}
M_X(n; \theta, \delta) = \sum_{\substack{X^\theta < d^2 \leq 2 X^\theta\\ d^2 \in \SS_X(\delta) \\ (n,d^2)=1}} \frac{1}{\varphi(d^2)} \sum_{\substack{|a| < 2 \sqrt{n} \\ a \equiv 2 \, (d) }} 1 .
\end{align*} 
 
For the sieve we need arithmetical information, given by the following two propositions (to simplify the proof of the first we assume that $\theta <3/5$).
\begin{prop} \label{typeiprop}\emph{(Type I estimate).} Let $\theta \in [1/2, 3/5)$. Let $MN=X$ satisfy $M \ll X^{3/2-3\theta/2-\eta}$. Then for any divisor bounded function $\alpha$ we have
\begin{align*}
\sum_{\substack{m \sim M \\ n \sim X/m}} \alpha(m) |\DD_X(mn; \theta, \delta)| = \sum_{\substack{m \sim M \\ n \sim X/m}} \alpha(m)  M_X(mn; \theta, \delta) + O \bigg( X^{3/2-\theta-\eta} \bigg)
\end{align*}
\end{prop}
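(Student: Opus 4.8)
The plan is to open up the definition of $|D_X(mn;\theta,\delta)|$, swap the order of summation so that the smooth variable $n$ runs over an arithmetic progression modulo $d^2$, and apply the truncated Poisson summation formula (Lemma \ref{poisson}) to evaluate that inner sum. Concretely, writing $|D_X(mn;\theta,\delta)| = \sum_{|a|<2\sqrt{mn}}\sum_{X^\theta<d^2\le 2X^\theta,\,d\mid a-2,\,d^2\mid mn+1-a} 1_{d^2\in\SS_X(\delta)}$, I would first deal with the mild complication that the range $|a|<2\sqrt{mn}$ depends on $n$: using a finer-than-dyadic decomposition as in Section \ref{ftdsection} (splitting $m$ and $n$ into short intervals of relative length $\delta_0 := X^{-\eta}$ or $\log^{-C}X$), the condition $|a|<2\sqrt{mn}$ can be frozen up to an acceptable error, after which $a$ ranges over a fixed interval and $n$ is genuinely free in a short dyadic-type interval. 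For each fixed $m$, $d$ (with $d^2\in\SS_X(\delta)$, $X^\theta<d^2\le 2X^\theta$), and each $a$ with $d\mid a-2$, the congruence $d^2\mid mn+1-a$ puts $n$ into a single residue class modulo $d^2/(d^2,m)$ provided $(m,d^2)\mid a-1$; since $(n,d)$ considerations and the coprimality structure of $\SS_X(\delta)$ make this clean, one reduces to sums $\sum_{n\equiv n_0\,(q)}\psi((n-x)/N)$ with $q$ essentially $d^2$ and $N=X/m$.

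Next I would apply Lemma \ref{poisson} to each such inner progression sum. The $h=0$ term produces exactly $\tfrac1{q}\sum_n\psi((n-x)/N)$, and summing these contributions over $m$, $d$, $a$ reconstitutes the main term $\sum_{m\sim M}\alpha(m)M_X(mn;\theta,\delta)$ (up to the harmless finer-than-dyadic bookkeeping and the standard passage between sharp and smooth cutoffs, which costs only $\pprec$ factors and is absorbed into $X^{3/2-\theta-\eta}$ after one checks the trivial size of the main term is $\asymp X^{3/2-\theta}$). The frequencies $1\le|h|\le X^\eta q/N$ give the error term: it is bounded by
\begin{align*}
\sum_{m\sim M}\ \sum_{\substack{X^\theta<d^2\le 2X^\theta\\ d^2\in\SS_X(\delta)}}\ \frac{N}{d^2}\ \sum_{\substack{|a|<2\sqrt{2X}\\ a\equiv 2\,(d)}}\ \sum_{1\le|h|\le X^\eta d^2/N}\big|c_h\, e_{d^2}(-h\,\overline{m}(a-1-\cdots))\big|.
\end{align*}
Using $|c_h|\le 1$ and the divisor bound $\alpha(m)\pprec 1$, the number of $a$ is $\pprec \sqrt{X}/d$, the number of $h$ is $\ll X^\eta d^2/N + 1$, and the trivial count over $m$ is $M$; combining with $N=X/M$, $d^2\asymp X^\theta$ gives a total of size $\pprec M\cdot X^{\theta/2}\cdot (\sqrt X/X^{\theta/2})\cdot (X^{1+\eta}/(M X^{\theta})+1)\cdot (X/M)/X^{\theta}$. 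Organizing this, the dominant piece is $\pprec X^{2-\theta+\eta}/X^{\theta}\cdot\text{(stuff)}$, and the constraint $M\ll X^{3/2-3\theta/2-\eta}$ is exactly what makes it $\ll X^{3/2-\theta-\eta'}$; the condition $\theta<3/5$ ensures the "$+1$" term from the Poisson tail (the case $d^2/N$ small) is also controlled, since then $X^\eta d^2/N$ is genuinely a nontrivial truncation rather than empty. I would also need the trivial bound for the $(m,d^2)>1$ "bad" part, handled by noting $\SS_X(\delta)$-moduli have controlled small factors and using $\sum_{r\sim R}1/\varphi(r)\ll 1$.

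The main obstacle I anticipate is the careful arithmetic of the congruence $d^2\mid mn+1-a$ combined with the coprimality conditions packaged into $\SS_X(\delta)=\{r^2q^2: q\in[X^{\delta^2},X^\delta)\cap\PP,\ (r,q)=1\}$ and with $(n,d)$ interacting with the divisor $d\mid a-2$: one must track exactly when the progression for $n$ is nonempty and what its modulus is, and ensure the resulting exponential $e_{d^2}(-h\overline{m}\,\cdot)$ has the shape needed for the trivial summation over $a$ (in particular that varying $a$ in its progression mod $d$ really does range over $\sqrt X/d$ genuinely distinct residues, so no extra cancellation is being thrown away wastefully but also no overcounting occurs). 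This is routine but fiddly; once it is set up, the estimate is purely a matter of counting $h$, $a$, $m$, $d$ against the budget $X^{3/2-\theta-\eta}$, and the stated range $M\ll X^{3/2-3\theta/2-\eta}$ together with $\theta<3/5$ is precisely calibrated to close it.
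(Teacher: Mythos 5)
Your overall setup (open the definition, freeze $|a|<2\sqrt{mn}$ by finer-than-dyadic decomposition, smooth the $n$-sum, apply Lemma \ref{poisson} to the progression $mn\equiv a-1\ (d^2)$) matches the paper's reduction to its Proposition \ref{prelitypeiprop}. But there is a genuine gap at the estimation of the dual frequencies: you bound the Poisson error term trivially, using $|c_h|\le 1$ and absolute values in $m$, $a$, $h$, $d$. That throws away all cancellation in the exponentials $e_{d^2}\bigl(-h(a-1)\overline{m}\bigr)$, and the count it yields is (each triple $(m,d,a)$ contributes $\ll \frac{N}{d^2}\cdot X^{\eta}\frac{d^2}{N}=X^{\eta}$, and there are $\ll M\cdot D^{1/2}\cdot X^{1/2}D^{-1/2}$ triples) of size $\ll MX^{1/2+\eta}$. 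This is $\ll X^{3/2-\theta-\eta}$ only when $M\ll X^{1-\theta-2\eta}$, which is strictly smaller than the claimed range $M\ll X^{3/2-3\theta/2-\eta}$ for every $\theta<1$ (at $\theta=1/2$: $X^{1/2}$ versus the needed $X^{3/4}$), and is already insufficient for the sieve application, where Type I information is needed for $dm$ up to $X^{\theta+3\delta}$. Your displayed bookkeeping also miscounts the number of frequencies (it should be $\asymp X^{\eta}d^2/N$, not $X^{\eta}N/d^2$), which is why the claim that ``$M\ll X^{3/2-3\theta/2-\eta}$ is exactly what closes it'' does not actually check out; likewise $\theta<3/5$ is not about the Poisson truncation being nonempty.

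The missing idea is that after Poisson summation one must exploit cancellation over $m$ and over the residue classes $a=bd+2$. The paper applies Cauchy--Schwarz in $m$ (keeping the sums over $h$ and $b$ inside), completes $m$ to a full reduced residue system modulo $d^2$, and expands the square; the resulting complete sum $\sum_{(m,d^2)=1}e_{d^2}\bigl((h_1(b_1d+1)-h_2(b_2d+1))\overline{m}\bigr)$ is a Ramanujan sum, bounded by $(h_1(b_1d+1)-h_2(b_2d+1),d^2)$, and a careful gcd count over $h_1,h_2,b_1,b_2$ (splitting $\ell=(h_1-h_2)/e=0$ and $\ell\neq 0$) gives $\pprec H^2B^2+HB^2d+HBd^2$. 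It is this dispersion-type step, using the average over $b$ and $h$ to control the diagonal, that produces the range $M\ll X^{3/2-3\theta/2-\eta}$ and the constraint $\theta<3/5$ (which enters as $D\ll X^{3/5-\eta}$ among the final conditions). Without some such exploitation of cancellation your argument proves a strictly weaker statement than the proposition.
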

\begin{proof}
We reduce the statement to Proposition \ref{prelitypeiprop}. By the definitions of $|\DD_X(mn; \theta, \delta)|$ and $M_X(mn; \theta, \delta)$ the claim is equivalent to
\begin{align*}
 \sum_{\substack{d^2 \sim X^\theta \\d^2 \in \SS_X(\delta)}} \sum_{\substack{|a| < 4 \sqrt{X} \\ a \equiv 2 \,\,(d)}}\sum_{\substack{m \sim M \\ n \sim X/m \\ mn \equiv a-1 \, (d^2) \\ |a| < 2 \sqrt{mn}}} \alpha(m)  = \sum_{\substack{d^2 \sim X^\theta \\d^2 \in \SS_X(\delta)}} \sum_{\substack{|a| < 4 \sqrt{X} \\ a \equiv 2 \,\,(d)}} \frac{1}{\varphi(d^2)}\sum_{\substack{m \sim M \\ n \sim X/m \\ (mn,d^2)=1  \\ |a| < 2 \sqrt{mn}}} \alpha(m) +O( X^{3/2-\theta-\eta}).
\end{align*}
Here we do not need the  assumption that $d^2 \in S_X(\delta)$ and actually are able to show the asymptotic point-wise for each $d$. The range could be slightly improved for large $\theta$ by making use of the well-factorability (cf. Remark \ref{typeifactremark}). The cross-condition $|a| < 2 \sqrt{mn}$ may be removed by using a finer-than-dyadic decomposition to the variables $a$ and $k=mn$  (cf. Section \ref{ftdsection}). Writing $a=bd+2$, the claim is reduced to showing that
\begin{align*}
\sum_{b \sim B} \sum_{\substack{m \sim M \\ n \sim X/m \\ mn \equiv bd+1 \, (d^2) }} \alpha(m)  =  \frac{1}{\varphi(d^2)} \sum_{b \sim B} \sum_{\substack{m \sim M \\ n \sim X/m \\ (mn,d^2)=1 }} \alpha(m) +O\bigg( \frac{B X^{1-\eta}}{d^2} \bigg)
\end{align*}
for all $d^2 \sim D:=X^\theta$ and $B \in [X^{1/2-\eta}D^{-1/2},4 X^{1/2}D^{-1/2}]$ (the contribution from $b$ smaller than $X^{1/2-\eta}D^{-1/2}$ is bounded trivially). 

We still wish to replace the condition $n \sim X/m$ by a smoothed version which is independent of $m$. To do this we first use the finer-than-dyadic decomposition to the variables $m$ and $n$ to replace $n \sim X/m$ by $n \sim N$ for some $N \asymp X/M$ (cf. Section \ref{ftdsection} with $\delta=X^{-\eta}$).  By using a smoothed finer-than-dyadic decomposition (similar to Section \ref{ftdsection}), the condition $n \sim N$ may be replaced by some $C^\infty$-smooth function $\psi_{N^{1-\nu}}(n)$ which is supported in a window of length $N^{1-\nu}$ around $N$ for some sufficiently small $\nu >0$. That is, we have
\begin{align*}
\psi_{N^{1-\nu}}(n)=\psi((n-x)/N^{1-\nu})
\end{align*}
for some $x \asymp N$, where $\psi$ is some compactly supported bounded $C^\infty$-smooth function. The contribution from the edges $N$ and $2N$ is bounded trivially. Since the ranges of $M$ and $N$ are defined up to a factor $X^{\eta}$ for some small $\eta>0$, we may replace $\psi_{N^{1-\nu}}(n)$ by $\psi_N(n)=\psi((n-x)/N)$ for some fixed $\psi$ and $x \asymp N^{1/(1-\nu)}$  if $\nu>0$ is sufficiently small. Thus, Proposition \ref{typeiprop} follows from Proposition \ref{prelitypeiprop}.
\end{proof}
\begin{prop} \label{typeiiprop} \emph{(Type II estimate).} Let $1/2 \leq \theta \leq 1/2+\delta^2-\eta$. Let $MN=X$ satisfy $X^{\theta/2} \ll N \ll X^{1-\theta-3\delta}.$ Then for any divisor bounded functions $\alpha, \beta$ with $\beta$ satisfying the Siegel-Walfisz condition (Definition \ref{swdefinition}) we have
\begin{align*}
\sum_{\substack{m \sim M \\ n \sim N}} \alpha(m) \beta(n)|\DD_X(mn; \theta, \delta)| = \sum_{\substack{m \sim M \\ n \sim N}} \alpha(m) \beta(n) M_X(mn; \theta, \delta) + O_C \bigg( \frac{X^{3/2-\theta}}{\log^C X} \bigg)
\end{align*}
\end{prop}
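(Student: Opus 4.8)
The plan is to follow the dispersion method outlined in the introduction. First I would unfold the definition of $|D_X(mn;\theta,\delta)|$ and $M_X(mn;\theta,\delta)$, writing the difference as a sum over $d^2 \in \SS_X(\delta)$ with $X^\theta < d^2 \le 2X^\theta$ and over residues $a$ with $|a|<2\sqrt{mn}$, $a\equiv 2\,(d)$, of
\[
1_{mn\equiv a-1\,(d^2)} - \frac{1_{(mn,d^2)=1}}{\varphi(d^2)}.
\]
Using the factorization $d^2=r^2q^2$ with $q\in[X^{\delta^2},X^\delta)\cap\PP$ and $(r,q)=1$ coming from membership in $\SS_X(\delta)$, I would apply the finer-than-dyadic decomposition of Section \ref{ftdsection} to remove the cross-conditions tying $a$ to $mn$ (via $|a|<2\sqrt{mn}$) and to localize $r\sim R$, $q\sim Q$ with $R^2Q^2\asymp X^\theta$, reducing everything to clean dyadic ranges at the cost of a harmless $(\log X)^{O(1)}$ factor absorbed into the final error. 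It then suffices to bound, for almost all moduli, the dispersion sum
\[
\sum_{r\sim R}\sum_{m\sim M}\Bigl|\sum_{q\sim Q} c_{r,q}\sum_{n\sim N}\beta(n)\bigl(f_{mn}(rq)-EMT_{mn}(rq)\bigr)\Bigr|^2 =: W-2V+U,
\]
obtained after Cauchy--Schwarz in the variables $(n,q)$, which discards $\alpha(m)$ and replaces it by $1$; here $f$ encodes the congruence $mn\equiv a-1\,(r^2q^2)$ summed over the admissible $a$'s, and $EMT$ its expected value.

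The heart is the evaluation of $W$, $V$, $U$. For $W=\sum_r\sum_{q_1,q_2}c_{r,q_1}c_{r,q_2}\sum_{n_1,n_2}\beta(n_1)\beta(n_2)\sum_{m\sim M}f_{mn_1}(rq_1)f_{mn_2}(rq_2)$ I would first separate the diagonal $q_1=q_2$ (respectively the near-diagonal where the two congruence conditions on $m$ are compatible), which is controlled precisely because each $f$ carries an average over the long range of residues $a\asymp\sqrt{X/X^\theta}=\sqrt{X}X^{-\theta/2}$; the condition $N\gg X^{\theta/2}$ ensures this $a$-average is genuinely long, and the condition $r=d/q\ll \sqrt{D}X^{-2\epsilon}$ (Remark after Theorem \ref{deltatheorem}) makes $r$ shorter than this range, so the diagonal is of size $\ll X^{3-2\theta}X^{-\eta}$, i.e. negligible compared to $(X^{3/2-\theta})^2$. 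For the off-diagonal terms, after the inner sum over $m\sim M$ is evaluated by Poisson summation (Lemma \ref{poisson}) modulo $r^2q_1^2q_2^2/(q_1,q_2)^2$ and the resulting character/exponential sums over $a$ are opened up, the key observation is that the main term matches exactly what $V$ and $U$ contribute, while the error (the nonzero frequencies) reduces — after expanding the additive characters $e_{q^2}(\cdot)$ into multiplicative characters or directly by orthogonality over the $a$-progression — to an expression amenable to the large sieve for multiplicative characters (Lemma \ref{large}). The Siegel--Walfisz hypothesis on $\beta$ is exactly what is needed to apply Lemma \ref{large} and save an arbitrary power of $\log X$. The sums $V$ and $U$ are treated the same way but are easier, since one or both of the $f$'s is replaced by its average $EMT$; they are evaluated to $X_0(1+O_C(\log^{-C}X))$ for the same main term $X_0$, so the main terms cancel in $W-2V+U$.

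I expect the main obstacle to be \textbf{bounding the off-diagonal error in $W$ via the large sieve within the stated range $X^{\theta/2}\ll N\ll X^{1-\theta-3\delta}$}: one must choose the Poisson modulus and the dualizing variable (summing over $n_1$ or $n_2$ or over the frequency $h$) so that the resulting large-sieve input has length and conductor balanced, and verify that the upper bound $(Q_{\mathrm{tot}}^2+N)\sum|c|^2$ from Lemma \ref{largeprimlemma} beats the target $X^{3-2\theta}\log^{-C}X$. The interplay between the sparseness of the square moduli (which inflates conductors to $\asymp X^{2\theta}$) and the saving from the $a$-average is delicate; the upper constraint $N\ll X^{1-\theta-3\delta}$ and the room $\theta\le 1/2+\delta^2-\eta$ are presumably dictated precisely by making this large-sieve bound admissible, and checking the arithmetic of these exponents is where the real work lies. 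The rest — the finer-than-dyadic bookkeeping, the Poisson main-term extraction, and the cancellation $W-2V+U$ — is routine once the large-sieve step is set up correctly.
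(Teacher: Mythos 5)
Your overall skeleton --- unfolding the definitions, finer-than-dyadic removal of the condition $|a|<2\sqrt{mn}$, writing $a=bd+2$, $d=rq$, then Cauchy--Schwarz with $q,b,n$ kept inside and the dispersion $W-2V+U$, with the $b$-average and the smallness of $r$ controlling the diagonal $q_1=q_2$ --- is the same as the paper's route through Proposition \ref{prelitypeiiprop}. However, at the heart of the argument, the off-diagonal evaluation of $W$, your plan has a genuine gap. You propose to evaluate the inner $m$-sum by Poisson summation modulo $r^2q_1^2q_2^2/(q_1,q_2)^2$ and then to control the nonzero frequencies by the large sieve. In fact the nonzero frequencies are not the issue at all: the upper constraint $N\ll X^{1-\theta-3\delta}$ is there precisely so that $M\gg X^{\eta}r^2q_1^2q_2^2$, hence the number of $m\sim M$ in the relevant residue classes is simply $M/(r^2q_1^2q_2^2)+O(1)$ with a trivially negligible error; no Poisson summation is used in the Type II estimate (Lemma \ref{poisson} only enters the Type I bound), and if you literally ``open up'' additive characters at the full modulus you are pushed toward incomplete Kloosterman-type sums, exactly the machinery the paper is built to avoid.

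The step that actually requires the large sieve --- and which is missing from your plan --- is matching the resulting zero-frequency term with the common main term $X_0$. The two congruences $mn_i\equiv b_irq_i+1\ (r^2q_i^2)$ force $n_2\equiv n_1\ (r)$ together with the linear condition $b_2q_2-b_1q_1\equiv \overline{n_1}(n_2-n_1)/r\ (r)$, and the system is equivalent to congruences on $m$ modulo $r^2q_1^2$ and modulo $q_2^2$ only. The $b$-average is then used a second time, with $B\gg rX^{\eta}$ (equivalently $\sqrt{R}\ll BX^{-\eta}$, which is what the hypothesis $\theta\le 1/2+\delta^2-\eta$ and the factorization $d=rq$ buy), to flatten the $b$-condition, in effect splitting the modulus $r^2$ into $r$. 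What remains is the correlation $n_2\equiv n_1\ (r)$ between the $\beta$-weighted variables; it is detected by Dirichlet characters modulo $r\sim\sqrt{R}$, the principal character reproduces $X_0$, and the non-principal characters are handled by Cauchy--Schwarz, Lemma \ref{large} and the Siegel--Walfisz hypothesis, using $N\gg X^{\theta/2}\gg X^{\eta}\sqrt{R}$ (this, not the length of the $a$-average, is what the lower bound on $N$ is for: the range $B\asymp\sqrt{X/D}$ of $b$ does not depend on $N$). So the large sieve and Siegel--Walfisz belong to the main, zero-frequency term via this splitting-modulo-$r$ mechanism; without it your argument does not close.
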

\begin{remark} Note that for $\theta$ slightly above 1/2 we essentially have Type II information for $M$ and $N$ in the range $[X^{1/4},X^{3/4}]$ except for a small gap in the middle, that is, when $M$ and $N$ are both close to $X^{1/2}$. Without this gap we could apply Vaughan's identity to obtain the desired asymptotic formula for $\sum_{p\sim X} |\DD_X(p; \theta, \delta)| $. For $\theta$ slightly above  1/2 we will apply Harman's sieve to show that the error term coming from this gap is essentially proportional to the width of this gap ($\ll \sqrt{\epsilon}$), which gives the error term in Theorem \ref{deltatheorem}.
 \end{remark}
 \begin{proof}
Similarly to the previous, we reduce the statement to Proposition \ref{prelitypeiiprop}. Under the assumptions of the proposition, we need to show
\begin{align*}
\sum_{\substack{m \sim M \\ n \sim N}} \alpha(m) \beta(n)(|\DD_X(mn; \theta, \delta)| -M_X(mn; \theta, \delta) )\ll_C  \frac{X^{3/2-\theta}}{\log^C X}.
\end{align*}
By the definitions of $|\DD_X(mn; \theta, \delta)|$ and $M_X(mn; \theta, \delta)$ this is equivalent to
\begin{align*}
 \sum_{\substack{d^2 \sim X^\theta \\d^2 \in \SS_X(\delta)}} \sum_{\substack{|a| < 4 \sqrt{X} \\ a \equiv 2 \,\,(d)}} \bigg(\sum_{\substack{m \sim M \\ n \sim N \\ mn \equiv a-1 \, (d^2) \\ |a| < 2 \sqrt{mn}}} \alpha(m) \beta(n) - \frac{1}{\varphi(d^2)}\sum_{\substack{m \sim M \\ n \sim N \\ (mn,d^2)=1  \\ |a| < 2 \sqrt{mn}}} \alpha(m) \beta(n)\bigg) \ll_C  \frac{X^{3/2-\theta}}{\log^C X}.
\end{align*}
The cross-condition $|a| < 2 \sqrt{mn}$ is again easily removed by using a finer-than-dyadic decomposition (cf. Section \ref{ftdsection}). By writing $a=bd+2$, the claim follows once we show that
\begin{align*}
 \sum_{\substack{d^2 \sim X^\theta \\d^2 \in \SS_X(\delta)}} \sum_{\substack{b \sim B}} \bigg(\sum_{\substack{m \sim M \\ n \sim N \\ mn \equiv bd+1 \, (d^2) }} \alpha(m) \beta(n) - \frac{1}{\varphi(d^2)}\sum_{\substack{m \sim M \\ n \sim N \\ (mn,d^2)=1  }} \alpha(m) \beta(n)\bigg) \ll_C  \frac{X^{3/2-\theta}}{\log^C X}
\end{align*}
holds for all $B \in [X^{1/2-\eta}D^{-1/2},4 X^{1/2}D^{-1/2}]$ (the contribution from $b$ smaller than $X^{1/2-\eta}D^{-1/2}$ is bounded trivially). Writing $d^2=r^2q^2$ and splitting the sums over $r$ and $q$ dyadically, we see that Proposition \ref{typeiprop} follows from Proposition \ref{prelitypeiprop}, where $\eta>0$ is chosen to be sufficiently small compared to $\delta>0$.
 \end{proof}
\subsection{Proof of Theorem \ref{deltatheorem}}
Define
\begin{align*}
S(\A_q, z) := \sum_{\substack{X/q < n \leq 2 X/q \\ (n,P(z))=1}} |\DD_X(nq; \theta, \delta)|  \quad \quad \text{and} \quad \quad S(\B_q, z)  :=  \sum_{\substack{X/q < n \leq 2 X/q \\ (n,P(z))=1}} M_X(nq; \theta, \delta),
\end{align*} 
so that (denoting $\A=\A_1$ and $\B=\B_1$) we have 
\begin{align*}
S(\A, 2 \sqrt{X}) =  \sum_{X < p \leq 2X} |\DD_X(p;\theta,\delta)|,
\end{align*}
and $S(\B, 2 \sqrt{X})$ is the expected main term of this sum. Hence, to prove Theorem \ref{deltatheorem} we need to show
\begin{align*}
 S(\A, 2 \sqrt{X}) &\geq  S(\B, 2 \sqrt{X}) - E_1 \quad \quad \text{and} \\
 S(\A, 2 \sqrt{X}) &\leq  S(\B, 2 \sqrt{X}) + E_2
\end{align*}
 for some error terms $E_1,E_2 \ll \sqrt{\epsilon} X^{3/2-\theta}/\log X$.

We first combine the Propositions \ref{typeiprop} and \ref{typeiiprop} to produce an asymptotic formula for sums over almost-primes (a variant of \cite[Theorem 3.1]{harman}).
\begin{prop} \label{funasympprop} Let $Z:=X^{1/4-4\delta}$ and $M \ll X^{3/2-3\theta/2-\eta}$ with $\theta =1/2 +\epsilon$ for some small $\epsilon>0$ and $\delta^2=2\epsilon$. Then for any divisor bounded $\alpha(m)$ we have
\begin{align*}
\sum_{m \sim M} \alpha(m) S(\A_m, Z) = \sum_{m \sim M} \alpha(m) S(\B_m, Z) + O_C \bigg( \frac{X^{3/2-\theta}}{\log^C X} \bigg).
\end{align*}
 \end{prop}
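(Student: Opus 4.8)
The plan is to prove Proposition~\ref{funasympprop} by Harman's sieve: iterate Buchstab's identity to decompose $\sum_{m\sim M}\alpha(m)S(\A_m,Z)$, and then evaluate every resulting piece by Proposition~\ref{typeiprop} or~\ref{typeiiprop}. Write $D_1:=X^{3/2-3\theta/2-\eta}$ for the Type I level. With $\theta=\tfrac12+\epsilon$ and $\delta^2=2\epsilon$, Proposition~\ref{typeiiprop} applies to a Type II sum $\sum_{mn=X}\alpha(m)\beta(n)(\cdots)$ whenever the ``long'' variable $m$ lies in $\J:=[X^{1/2+\epsilon+3\delta},\,X^{3/4-\epsilon/2}]$ (equivalently the ``short'' variable $n$ lies in $[X^{1/4+\epsilon/2},\,X^{1/2-\epsilon-3\delta}]$). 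Two arithmetic inequalities drive the argument: (i) the left endpoint of $\J$ is below the Type I level, $X^{1/2+\epsilon+3\delta}\le D_1$, i.e.\ $\tfrac52\epsilon+3\delta+\eta\le\tfrac14$; and (ii) the window $\J$ is longer than $Z=X^{1/4-4\delta}$ on a logarithmic scale, $X^{1/2+\epsilon+3\delta}\cdot Z\le X^{3/4-\epsilon/2}$, i.e.\ $\tfrac32\epsilon\le\delta$, which holds because $\delta=\sqrt{2\epsilon}\gg\epsilon$.

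Iterate Buchstab's identity, peeling the least prime factor: beginning from $S(\A_m,Z)$, at a generic stage a term $S(\A_\ell,w)$ with $\ell=mp_1\cdots p_j$ (and $w=Z$ if $j=0$, $w=p_j$ otherwise) is replaced by $S(\A_\ell,2)-\sum_{p<w}S(\A_{\ell p},p)$ \emph{provided} $\ell<X^{1/2+\epsilon+3\delta}$; if $\ell\ge X^{1/2+\epsilon+3\delta}$ the term is left untouched. Along any branch $\ell$ increases by at least a factor $2$ each step, so every branch stops after at most $O(\log X)$ steps and no residual term remains. Each piece produced is then of exactly one of two kinds. First kind: $S(\A_\ell,2)=\sum_{n\sim X/\ell}|D_X(\ell n;\theta,\delta)|$ with $\ell<X^{1/2+\epsilon+3\delta}\le D_1$ by~(i) — there is no sieve constraint, so after summing over $m$ (with coefficient $\alpha'(\ell)=\sum_{\ell=mp_1\cdots p_j}\alpha(m)$, a sum over at most $\tau(\ell)$ admissible factorisations, hence divisor bounded \emph{uniformly in~$j$}) and splitting $\ell$ into dyadic ranges, this is evaluated by Proposition~\ref{typeiprop}. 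Second kind: $S(\A_\ell,w)$ with $\ell\ge X^{1/2+\epsilon+3\delta}$ — here the parent had $\ell/p_j<X^{1/2+\epsilon+3\delta}$ (or, at $j=0$, $\ell=m\le M\le D_1$), so by~(ii) $\ell=(\ell/p_j)\,p_j<X^{1/2+\epsilon+3\delta}\cdot Z\le X^{3/4-\epsilon/2}$; thus $\ell\in\J$ and $n\sim X/\ell$ falls in the short window, and after summing over $m$ and over $w=p_j$ and splitting $\ell,\,p_j$ dyadically this is a Type II sum with divisor bounded coefficient $\alpha'(\ell)$ on the long variable and the bounded coefficient $1_{(n,P(w))=1}$ on the short variable, which satisfies the Siegel--Walfisz condition, so Proposition~\ref{typeiiprop} applies. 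No finer-than-dyadic decomposition is needed since the Type II split is forced: the whole head $\ell$ becomes the long variable.

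Running the identical iteration for $S(\B_m,Z)$ produces the same tree, and the ``main terms'' appearing on the right of Propositions~\ref{typeiprop} and~\ref{typeiiprop} (the $M_X$-sums) are precisely the pieces of the $\B$-tree, so they reassemble to $\sum_{m\sim M}\alpha(m)S(\B_m,Z)$. Only $O((\log X)^3)$ Type I/II pieces occur in all (the depth $j=O(\log X)$, together with dyadic ranges for $\ell$ and for $p_j$), and the implied constants in Propositions~\ref{typeiprop} and~\ref{typeiiprop} are uniform across them because the long-side coefficient is everywhere $\ll\tau$. Summing the $O(X^{3/2-\theta-\eta})$ Type I errors and the $O_{C'}(X^{3/2-\theta}/\log^{C'}X)$ Type II errors over all pieces and taking $C'=C+3$ gives the bound $O_C(X^{3/2-\theta}/\log^C X)$.

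The crux is that \emph{every} Buchstab piece is Type I or Type II, leaving no remainder to be bounded only trivially; this is exactly what (i) and~(ii) secure, and it is the reason the sieve level is taken as small as $Z=X^{1/4-4\delta}$ — a larger $Z$ would let the forced Type II cut in a terminated branch overshoot the endpoint $X^{3/4-\epsilon/2}$ of $\J$. Two routine points remain: the accumulated coefficients $\alpha'(\ell)$ are divisor bounded with a bound independent of the Buchstab depth (they are $\ll\tau(\ell)$ times the bound on $\alpha$), so the error estimates may be summed over the poly-logarithmically many pieces; and $1_{(n,P(w))=1}$ satisfies the Siegel--Walfisz condition, which is standard.
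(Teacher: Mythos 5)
Your overall strategy (iterated Buchstab with the stopping rule $\ell\ge X^{\theta+3\delta}$, plus the two inequalities (i) and (ii)) is a reasonable alternative to the paper's route: the paper instead expands the sieve condition $(n,P(Z))=1$ by M\"obius, so that the summation variable $n$ carries no sieve condition and the primes of the M\"obius variable can be regrouped with $n$ when needed. Your treatment of the first-kind pieces $S(\A_\ell,2)$ is fine. However, there is a genuine gap in the second-kind pieces. For a terminated branch with $j\ge 1$ the short-side weight is $1_{(n,P(p_j))=1}$, and $p_j$ is branch data of the \emph{long} variable $\ell$: after summing over branches with $p_j$ in a dyadic range, the combined weight has the shape $\sum_{w}\alpha'_w(\ell)\,1_{(n,P(w))=1}$, which is not of the product form $\alpha(\ell)\beta(n)$ required by Proposition \ref{typeiiprop}. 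Your sentence ``no finer-than-dyadic decomposition is needed'' is precisely where this is hidden. The two obvious repairs both cost something: if you fix $w=p_j$ exactly so that the coefficients do factor, you need $\asymp\pi(Z)\gg X^{1/5}$ separate applications of Proposition \ref{typeiiprop}, and since its error term $O_C(X^{3/2-\theta}\log^{-C}X)$ does not shrink with the size of the coefficient support, the accumulated error swamps the main term — so your count of $O((\log X)^3)$ pieces is wrong for these terms. Otherwise you must genuinely decouple the condition that the least prime factor of $n$ is $\ge p_j$, e.g.\ by fixing $p_j$ exactly when $p_j\le\log^{B}X$ and, for larger $p_j$, running a finer-than-dyadic decomposition in the pair (last peeled prime, least prime factor of $n$), with an upper-bound Type I estimate to control the boundary windows where the two are within a factor $1+\log^{-A}X$ of each other. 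That is a real additional argument, not a routine omission; the paper's M\"obius expansion avoids the issue entirely, the only cross-conditions there being inequalities among explicitly tracked primes, which is why the finer-than-dyadic device of Section \ref{ftdsection} suffices in the paper's proof.

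A secondary, smaller point: the Siegel--Walfisz property of $1_{(n,P(w))=1}$, uniformly in $2\le w\le Z$, is true but does require a short argument (fundamental-lemma sieve weights for small $w$, decomposition into primes plus Siegel--Walfisz for large $w$); compare the paper's own three-case verification using Lemma \ref{smoothlemma}. This would be acceptable to cite as standard if the main factorization issue above were resolved, but as written the proposal does not yet reduce the second-kind terms to sums covered by Proposition \ref{prelitypeiiprop} or \ref{typeiiprop}.
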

 \begin{proof}
 By using the M\"obius function to expand $(n,P(Z))=1$ we get for $\CC \in \{\A,\B\}$
 \begin{align*}
 \sum_{m \sim M} \alpha(m) S(\CC_m, Z) =  \sum_{m \sim M} \alpha(m) \sum_{d|P(Z)} \mu(d) \sum_{dmn \sim X} F_\CC(dmn),
 \end{align*}
 where $F_\A(n)=|\DD_X(n; \theta, \delta)|$ and $F_\B(n)=M_X(n; \theta, \delta).$  We split both sums in two parts depending on the size of $dm$ to get
\begin{align*}
\sum_{m \sim M} \alpha(m) S(\CC_m, Z) = & \sum_{m \sim M} \alpha(m) \sum_{\substack{d|P(Z) \\ dm \leq X^{\theta+3\delta}}} \mu(d) \sum_{dmn \sim X} F_\CC(dmn) \\
&+\sum_{m \sim M} \alpha(m) \sum_{\substack{d|P(Z) \\ dm > X^{\theta+3\delta}}} \mu(d) \sum_{dmn \sim X} F_\CC(dmn) \\
&=: \Sigma_{\text{I}}(\CC) +\Sigma_{\text{II}}(\CC).
\end{align*}   
 
In the part $dm \leq X^{\theta+3\delta} \leq X^{3/2-3\theta/2-\eta}$ we get an asymptotic formula
\begin{align*}
\Sigma_{\text{I}}(\A) =\Sigma_{\text{I}}(\B) +O_C \bigg( \frac{X^{3/2-\theta}}{\log^C X} \bigg)
\end{align*}
 by Proposition \ref{typeiprop}, since the sums are of the form
\begin{align*}
\sum_{m' \leq X^{\theta+3\delta}} \tilde{\alpha}(m')  \sum_{m'n \sim X} F_\CC(m'n)
\end{align*}
with
\begin{align*}
\tilde{\alpha}(m')  := \sum_{\substack{m' = dm \\ d| P(Z) \\ m \sim M}} \alpha(m)\mu(d).
\end{align*}

In the part $dm > X^{\theta+3\delta}$ write $d=p_1\cdots p_k$ for $p_1 < \cdots < p_k < Z$ to get
\begin{align*}
\sum_{k \ll \log X}(-1)^k \sum_{m\sim M}\alpha(m) \sum_{\substack{p_1 < \cdots < p_k < Z \\ p_1\cdots p_k m > X^{\theta+3\delta}}} \sum_{\substack{n \\ p_1\cdots p_k m n \sim X}} F_\CC(p_1\cdots p_k mn).
\end{align*}
In the summations we have $n \ll X^{1-\theta-3\delta}$ and $dn \gg X/m \gg X^{\theta/2}$. Hence, either $X^{\theta/2} \ll  n \ll  X^{1-\theta-3\delta}$ or there exists a unique $1\leq \ell \leq k$ such that 
\begin{align*}
X^{\theta/2} \ll p_1 \cdots p_\ell n \ll Z X^{\theta/2}\ll X^{1-\theta-3\delta} \quad \text{and} \quad p_1 \cdots p_{\ell-1} n \,\ll X^{\theta/2}.
\end{align*}
The cross-conditions $p_{\ell} < p_{\ell+1}$ and $(p_1\cdots p_\ell) \cdot (p_{\ell+1}\cdots p_k m) >  X^{\theta+3\delta}$ can be removed by using a finer-than-dyadic decomposition (cf. Section \ref{ftdsection}). Therefore, denoting $m'=m p_{\ell+1} \cdots p_k$ and $n'=n p_1 \cdots p_\ell$, we get $\ll \log^{O(1)} X$ sums of the form 
\begin{align*}
\sum_{\substack{m'n' \sim X \\ X^{\theta/2} \ll n' \ll X^{1-\theta-3\delta} }} \tilde{\alpha}(m')\beta(n') F_\CC(p_1\cdots p_k mn),
\end{align*}
where
\begin{align*}
\tilde{\alpha}(m') = \sum_{\substack{m'= p_{\ell+1} \cdots p_k m \sim M'\\ m \sim M\\ p_{\ell+1} < \cdots < p_k < Z\\ p_{\ell+1} \sim P_1}} \alpha(m) \quad \text{and} \quad \beta(n') = \sum_{\substack{n'=p_1 \cdots p_\ell n \\p_1 < \cdots < p_\ell < Z \\p_1 \cdots p_{\ell-1} n \,\ll X^{\theta/2}\\p_\ell \sim P_0 \\ p_1 \cdots p_\ell \sim P_2
}}
\end{align*}
for some dyadic ranges $M',P_0,P_1,P_2$, so that we get an asymptotic formula using Proposition \ref{typeiiprop}.
 Note we have $\ell=0$ and $n'=n$ in the case that $n \gg X^\theta/2$. To see that the coefficients $\beta$ satisfy the Siegel-Walfisz condition (Definition \ref{swdefinition}) we note that this is trivial if $n \gg \exp(\log^{1/2} X)$, and this follows from the Siegel-Walfisz Theorem if $p_\ell \gg \exp(\log^{1/2} X)$. The remaining part is supported on $\exp( \log^{1/2} X)$-smooth numbers which gives a negligible contribution by Lemma \ref{smoothlemma}. Thus, we get\begin{align*}
\Sigma_{\text{II}}(\A) =\Sigma_{\text{II}}(\B) +O_C \bigg( \frac{X^{3/2-\theta}}{\log^C X} \bigg).
\end{align*}
 \end{proof}

Let $Z:=X^{1/4-4\delta}.$ We prove Theorem \ref{deltatheorem} by showing separately the corresponding lower and upper bounds.

For the lower bound we apply Buchstab's identity twice to obtain for $\CC \in \{\A,\B\}$
\begin{align*}
S(\CC, 2 \sqrt{X}) = S(\CC,Z) - \sum_{Z \leq p < 2 \sqrt{X}} S(\CC_p,Z) + \sum_{\substack{Z \leq q < p < 2 \sqrt{X}} } S(\CC_{pq},q),
\end{align*}
For the first two terms we have asymptotic formulas by Proposition \ref{funasympprop}, that is,
\begin{align*}
S(\A,Z)& = S(\B,Z) +  O_C \bigg( \frac{X^{3/2-\theta}}{\log^C X} \bigg)\quad \quad \quad \text{and} \\
\sum_{Z \leq p < 2 \sqrt{X}} S(\A_p,Z) &=\sum_{Z \leq p < 2 \sqrt{X}} S(\B_p,Z) +O_C \bigg( \frac{X^{3/2-\theta}}{\log^C X} \bigg).
\end{align*}
 For the last sum we note that if $q \in [X^{1/4+\epsilon/2},X^{1-\theta-3\delta}],$ then we have a Type II sum, so that we get an asymptotic formula by Proposition \ref{typeiiprop} in this range. Hence, by positivity we have
 \begin{align*}
 S(\A, 2 \sqrt{X}) &= S(\A,Z) - \sum_{Z \leq p < 2 \sqrt{X}} S(\A_p,Z) + \sum_{\substack{Z \leq q < p < 2 \sqrt{X}} } S(\A_{pq},q) \\
  & \geq S(\A,Z) - \sum_{Z \leq p < 2 \sqrt{X}} S(\A_p,Z) + \sum_{\substack{Z \leq q < p < 2 \sqrt{X}\\   X^{1/4+\epsilon/2} \leq q \leq X^{1-\theta-3\delta}} } S(\A_{pq},q) \\
  &= S(\B,Z) - \sum_{Z \leq p < 2 \sqrt{X}} S(\B_p,Z) + \sum_{\substack{Z \leq q < p < 2 \sqrt{X}\\   X^{1/4+\epsilon/2} \leq q \leq X^{1-\theta-3\delta}} } S(\B_{pq},q) +  O_C \bigg( \frac{X^{3/2-\theta}}{\log^C X} \bigg) \\
 & = S(\B, 2 \sqrt{X}) + O_C \bigg( \frac{X^{3/2-\theta}}{\log^C X} \bigg)  - \sum_{\substack{Z \leq q < p < 2 \sqrt{X}\\   q < X^{1/4+\epsilon/2} } } S(\B_{pq},q) -\sum_{\substack{Z \leq q < p < 2 \sqrt{X} \\   q > X^{1-\theta-3\delta}}}  S(\B_{pq},q) .
\end{align*}   
By Lemma \ref{bilemma} the error term from the range $q \leq X^{1/4+\epsilon/2}$ is bounded by $\ll \delta + \epsilon$ times the main term, that is,
\begin{align*}
&\sum_{\substack{Z \leq q < p < 2 \sqrt{X}\\ q < X^{1/4+\epsilon/2}} } S(\B_{pq},q) = \sum_{\substack{Z \leq q < p < 2 \sqrt{X}\\ q < X^{1/4+\epsilon/2}} }  \sum_{\substack{X/pq < n \leq 2 X/pq \\ (n,P(q))=1}} M_X(nq; \theta, \delta) \\
&= \sum_{\substack{Z \leq q < p < 2 \sqrt{X}\\ q < X^{1/4+\epsilon/2}} } \sum_{\substack{X/pq < n \leq 2 X/pq \\ (n,P(q))=1}}\sum_{\substack{X^\theta < d^2 \leq 2 X^\theta\\ d^2 \in \SS_X(\delta) \\ (n,d^2)=1}} \frac{1}{\varphi(d^2)} \sum_{\substack{|a| < 2 \sqrt{n} \\ a \equiv 2 \, (d) }} 1 \\
&=\sum_{\substack{X^\theta < d^2 \leq 2 X^\theta\\ d^2 \in \SS_X(\delta) }} \frac{1}{\varphi(d^2)} \sum_{\substack{|a| < 2 \sqrt{n} \\ a \equiv 2 \, (d) }} \sum_{\substack{Z \leq q < p < 2 \sqrt{X}\\ q < X^{1/4+\epsilon/2}} }  \sum_{\substack{X/pq < n \leq 2 X/pq \\ (n,P(q))=1}}1_{(n,d^2)=1}  \\
&= (1+o(1))\sum_{\substack{X^\theta < d^2 \leq 2 X^\theta\\ d^2 \in \SS_X(\delta) }} \frac{1}{\varphi(d^2)} \sum_{\substack{|a| < 2 \sqrt{n} \\ a \equiv 2 \, (d) }} \frac{X}{\log X} \int_{1/4-4\delta}^{1/4+\epsilon/2} \int_{\beta}^{1/2} \omega \bigg(\frac{1-\alpha-\beta}{\beta} \bigg)\frac{d\alpha d\beta}{\alpha\beta^2}
\\
&\ll (\delta+\epsilon)S(\B, 2 \sqrt{X}).
\end{align*}
For the part where $q>X^{1-\theta-3\delta}$ we have $pq^2 > 4X $ (if $\epsilon>0$ is sufficiently small). Hence, the sum $S(\B_{pq},q)$ is empty except for $q> \sqrt{X}/2$ (since $p \leq 2 \sqrt{X}$), which gives a contribution
\begin{align} \label{buchedgecasebound}
\sum_{\substack{\sqrt{X}/2 \leq q < p < 2 \sqrt{X} }}  S(\B_{pq},q) = \sum_{\substack{\sqrt{X}/2 \leq q < p < 2 \sqrt{X} \\ X < pq \leq 2X
}} M_X(pq; \theta,\delta) \ll \frac{S(\B, 2 \sqrt{X})}{\log X}
\end{align}
by Lemma \ref{bilemma}. Hence, using $\delta^2=2\epsilon$ we get \[S(\A, 2 \sqrt{X}) \geq  S(\B, 2 \sqrt{X}) - E_1\] with $E_1 \ll \sqrt{\epsilon} X^{3/2-\theta}/\log X$.

For the upper bound we apply Buchstab's identity once, which yields for $\CC \in \{\A,\B\}$
\begin{align*}
S(\CC, 2 \sqrt{X}) = S(\CC,Z) - \sum_{Z < p \leq 2 \sqrt{X}} S(\CC_p,p).
\end{align*}
For the first sum we have an asymptotic formula by Proposition \ref{funasympprop}. For $X^{1/4+\epsilon/2} < p < X^{1-\theta-3\delta}$ we have a Type II sum and we get an asymptotic formula by Proposition \ref{typeiiprop}. Thus,
\begin{align*}
S(\A, 2 \sqrt{X}) &\leq S(\B, 2 \sqrt{X}) + O_C \bigg( \frac{X^{3/2-\theta}}{\log^C X} \bigg) + \sum_{\substack{Z \leq p \leq 2 \sqrt{X} \\ p \leq X^{1/4+\epsilon/2} \text{ or } p \geq X^{1-\theta-3\delta}}} S(\B_p,p)
\end{align*}
Similarly as with the lower bound, the error term from the remaining part is by Lemma \ref{bilemma} bounded by   $\ll \delta + \epsilon$ times the main term, so that $S(\A, 2 \sqrt{X}) \leq  S(\B, 2 \sqrt{X}) + E_2$ with $E_2 \ll \sqrt{\epsilon} X^{3/2-\theta}/\log X$. \qed

\section{The sieve argument for the lower bound result} \label{lowerboundsection}
In this section we prove Theorem \ref{lowerboundtheorem}. To this end we define (in view of Proposition \ref{prelitypeiiprop} which requires that $R^{1/2} < B X^{-\eta}$)
\begin{align*}
\D(\theta) = \{r^2 q^2 \sim X^\theta: r \in [X^{1/2-\theta/2-2\eta},X^{1/2-\theta/2-\eta,}] \}
\end{align*}
and set $\DD^{-}_X(p;\theta):=\DD_X(p;\theta) \cap \D(\theta)$ so that $|\DD_X(p;\theta)| \geq |\DD^-_X(p;\theta)|$. Similarly as before, we extend $|\DD^{-}_X(p;\theta)|$ to all integers by defining
\begin{align*}
|\DD^{-}_X(n;\theta)|:=\sum_{|a| < 2 \sqrt{n}} \sum_{\substack{X^\theta < d^2 \leq 2 X^\theta \\ d | n-1 \\ d^2 | n+1-a}} 1_{d^2 \in \D(\theta)}.
\end{align*}
We denote by $M^{-}_X(n; \theta)$ the expected average value of this function, that is, 
\begin{align*}
M^{-}_X(n; \theta) = \sum_{\substack{X^\theta < d^2 \leq 2 X^\theta\\ d^2 \in \D(\theta) \\ (n,d^2)=1}} \frac{1}{\varphi(d^2)} \sum_{\substack{|a| < 2 \sqrt{n} \\ a \equiv 2 \, (d) }} 1.
\end{align*}
By similar technique as in the reductions in Section \ref{aisection}, we see that Propositions \ref{prelitypeiprop} and \ref{prelitypeiiprop} provide us with the following arithmetical information.
\begin{prop}\emph{\textbf{(Type I information)}} \label{typeilowerprop} Suppose $\theta \in [1/2,3/5).$ Let $M \ll X^{3/2-3\theta/2-\eta}$. Then for any divisor bounded function $\alpha$ we have
\begin{align*}
\sum_{\substack{m \sim M \\ n \sim X/m}} \alpha(m) |\DD^-_X(mn; \theta)| = \sum_{\substack{m \sim M \\ n \sim X/m}} \alpha(m)  M^{-}_X(mn; \theta) + O \bigg( X^{3/2-\theta-\eta} \bigg).
\end{align*}
\end{prop}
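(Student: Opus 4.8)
The plan is to repeat verbatim the chain of reductions used in Section~\ref{typeisection} to derive Proposition~\ref{typeiprop} from Proposition~\ref{prelitypeiprop}, the only modification being that the weight $1_{d^2\in\SS_X(\delta)}$ is everywhere replaced by $1_{d^2\in\D(\theta)}$. The essential observation, already noted at the opening of Section~\ref{typeisection}, is that Proposition~\ref{prelitypeiprop} is established \emph{pointwise in $d$}: it is valid for every $d$ with $d^2\ll X^\theta$, with no factorization hypothesis on $d$. Since $1_{d^2\in\D(\theta)}$ is a $0$--$1$ function that merely removes some of the moduli $d$, this restriction is harmless for the Type~I estimate, and the argument becomes a matter of bookkeeping.

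In detail, one first unfolds the definitions of $|D^-_X(mn;\theta)|$ and $M^-_X(mn;\theta)$, noting that when $d^2\mid mn+1-a$ the condition $d\mid mn-1$ is equivalent to $a\equiv 2\ (d)$; this shows the claim is equivalent to
\begin{align*}
\sum_{\substack{d^2\sim X^\theta\\ d^2\in\D(\theta)}}\ \sum_{\substack{|a|<4\sqrt X\\ a\equiv 2\,(d)}}\Bigg(\sum_{\substack{m\sim M,\ n\sim X/m\\ mn\equiv a-1\,(d^2)\\ |a|<2\sqrt{mn}}}\alpha(m)\ -\ \frac1{\varphi(d^2)}\sum_{\substack{m\sim M,\ n\sim X/m\\ (mn,d^2)=1\\ |a|<2\sqrt{mn}}}\alpha(m)\Bigg)\ \ll\ X^{3/2-\theta-\eta}.
\end{align*}
One then strips the cross-condition $|a|<2\sqrt{mn}$ by the finer-than-dyadic device of Section~\ref{ftdsection}, writes $a=bd+2$ (so that $b$ runs over a dyadic range $B\ll X^{1/2}D^{-1/2}$ with $D:=X^\theta$, the tail $b\ll X^{1/2-\eta}D^{-1/2}$ contributing trivially), and replaces the sharp cutoff $n\sim X/m$ by a fixed smooth weight $\psi_N$ with $N\asymp X/M$, exactly as in the passage preceding Proposition~\ref{prelitypeiprop}. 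This reduces matters, for each fixed $d$ with $d^2\sim X^\theta$ and each dyadic $B\in[X^{1/2-\eta}D^{-1/2},4X^{1/2}D^{-1/2}]$, to precisely the estimate of Proposition~\ref{prelitypeiprop}, whose hypotheses ($\theta<3/5$, $MN\asymp X$, $M\ll X^{3/2-3\theta/2-\eta}$, $d^2\ll X^\theta$, and $B$ in the stated range) all hold under the assumptions here.

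Finally one sums the pointwise error $O\!\big(BX^{1-\eta}/d^2\big)$ from Proposition~\ref{prelitypeiprop} over the admissible $d$ and $B$: there are $\ll X^{\theta/2}$ integers $d$ with $d^2\sim X^\theta$, the $O(\log X)$ dyadic values of $B$ sum to $\ll X^{1/2-\theta/2}$, and $d^2\asymp X^\theta$, so the total error is
\begin{align*}
\ll\ X^{\theta/2}\cdot X^{1/2-\theta/2}\cdot\frac{X^{1-\eta}}{X^\theta}\ =\ X^{3/2-\theta-\eta},
\end{align*}
which, after relabelling $\eta$, is the asserted bound. I do not expect a genuine obstacle: all the analytic work is encapsulated in Proposition~\ref{prelitypeiprop} (and ultimately in the Ramanujan-sum estimate used in its proof), and the only delicate point is the $\eta$-bookkeeping in the finer-than-dyadic splittings and in the smoothing step — one must check that the ranges of $M$ and $N$ are perturbed only by factors $X^{O(\eta)}$, so that the condition $M\ll X^{3/2-3\theta/2-\eta}$ and the admissible range for $N$ in Proposition~\ref{prelitypeiprop} survive the passage from $n\sim X/m$ to the $\psi_N$-weighted sum.
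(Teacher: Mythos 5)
Your proposal is correct and matches the paper's treatment: the paper likewise observes that Proposition~\ref{prelitypeiprop} holds pointwise in $d$ (no factorization hypothesis), so the reductions of Section~\ref{typeisection} carry over verbatim with $1_{d^2\in\SS_X(\delta)}$ replaced by $1_{d^2\in\D(\theta)}$, and the error is summed over $d$ and the dyadic ranges of $B$ exactly as you do.
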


\begin{prop} \emph{\textbf{(Type II information)}} \label{typeiilowerprop} Suppose $\theta \geq 1/2$. Let $MN=X$ satisfy $X^{\theta/2} \ll N \ll X^{2-3\theta-\eta}$. Then for any divisor bounded functions $\alpha, \beta$ with $\beta$ satisfying the Siegel-Walfisz condition we have
\begin{align*}
\sum_{\substack{m \sim M \\ n \sim N}} \alpha(m) \beta(n)|\DD^-_X(mn; \theta)| = \sum_{\substack{m \sim M \\ n \sim N}} \alpha(m) \beta(n) M^-_X(mn; \theta) + O_C \bigg( \frac{X^{3/2-\theta}}{\log^C X} \bigg)
\end{align*}
\end{prop}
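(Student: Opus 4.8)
The plan is to deduce Proposition~\ref{typeiilowerprop} from Proposition~\ref{prelitypeiiprop} by running, essentially verbatim, the chain of elementary reductions performed at the beginning of Section~\ref{typeiisection} (which is exactly what the sentence preceding the statement asserts). First I would unfold the definitions of $|D^-_X(mn;\theta)|$ and $M^-_X(mn;\theta)$; using that, in the presence of $d^2\mid n+1-a$, the condition $d\mid n-1$ is equivalent to $a\equiv 2\ (d)$, the claimed identity turns into a bound of the form $\ll_C X^{3/2-\theta}/\log^C X$ for a triple sum over $d^2\sim X^\theta$ with $d^2\in\D(\theta)$, over residues $a\equiv 2\ (d)$ with $|a|<4\sqrt X$, and over $m\sim M$, $n\sim N$, comparing the congruence sum with $mn\equiv a-1\ (d^2)$ against its expected value, $\varphi(d^2)^{-1}$ times the coprime sum. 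I would then strip the cross-condition $|a|<2\sqrt{mn}$ via the finer-than-dyadic device of Section~\ref{ftdsection}, write $a=bd+2$, discard the tail $|a|<X^{1/2-\eta}D^{-1/2}$ trivially, and so reduce to the inner difference summed over $b\sim B$ with $B\in[X^{1/2-\eta}D^{-1/2},4X^{1/2}D^{-1/2}]$, exactly as in Section~\ref{typeiisection}; note here $B\asymp X^{1/2-\theta/2}$ up to a factor $X^{\pm\eta}$.

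Next I would insert the factorization $d=rq$. By definition every $d^2\in\D(\theta)$ has $r$ confined to the short interval $[X^{1/2-\theta/2-2\eta},X^{1/2-\theta/2-\eta}]$ (with $q$ prime and coprime to $r$); splitting $r^2\sim R$ and $q^2\sim Q$ dyadically costs only $O(\log^2 X)$ blocks and forces $RQ\asymp X^\theta$ with $R\asymp X^{1-\theta}$ and $Q\asymp X^{2\theta-1}$, up to factors $X^{O(\eta)}$. Applying the triangle inequality to pull the $d^2$-sum outside the absolute value, each dyadic block is now precisely a sum of the shape estimated by Proposition~\ref{prelitypeiiprop}, which I would invoke block by block. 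The two hypotheses of that proposition then have to be matched: the requirement $R^{1/2}<BX^{-\eta}$ is exactly why $\D(\theta)$ is defined with its $r$-window sitting slightly below $B\asymp X^{1/2-\theta/2}$ (taking the $\eta$ in $\D(\theta)$ a little larger than the one in Proposition~\ref{prelitypeiiprop}), and the admissible range $X^{\theta/2}\ll N\ll X^{1-\eta}R^{-1}Q^{-2}$ unwinds, since $R^{-1}Q^{-2}\asymp RX^{-2\theta}\asymp X^{1-3\theta+O(\eta)}$, into exactly the hypothesis $X^{\theta/2}\ll N\ll X^{2-3\theta-\eta}$ of Proposition~\ref{typeiilowerprop} (after absorbing the $O(\eta)$ discrepancies into the ambient $\eta$).

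Finally I would sum over the $O(\log^2 X)$ dyadic blocks. Each contributes $\ll_C BX/(\sqrt D\log^C X)$, and since $BX/\sqrt D\asymp X^{1/2-\theta/2}\cdot X\cdot X^{-\theta/2}=X^{3/2-\theta}$, the total is $\ll_C X^{3/2-\theta}\log^{2-C}X$, which is the asserted bound after renaming $C$. The argument contributes no analytic input of its own: all the real work sits in Proposition~\ref{prelitypeiiprop}, i.e.\ in the dispersion-method-plus-large-sieve estimate of Section~\ref{typeiisection}. The only step demanding genuine care is the exponent bookkeeping of the middle paragraph---one must choose the $r$-window in $\D(\theta)$ simultaneously narrow enough that $R^{1/2}<BX^{-\eta}$, wide enough that $\D(\theta)$ still captures a positive proportion of the moduli $d^2\sim X^\theta$ (needed for the sieve in Section~\ref{lowerboundsection}), and positioned so that $N\ll X^{1-\eta}R^{-1}Q^{-2}$ collapses to precisely $N\ll X^{2-3\theta-\eta}$; none of this is deep, but it is what pins down the shape of the hypotheses. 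The companion reduction of Proposition~\ref{typeilowerprop} from Proposition~\ref{prelitypeiprop} is easier still, since the Type I estimate already holds pointwise in $d$ and no factorization is involved.
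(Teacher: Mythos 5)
Your proposal is correct and follows essentially the same route as the paper, which deduces Proposition \ref{typeiilowerprop} from Proposition \ref{prelitypeiiprop} by exactly the initial reductions of Section \ref{typeiisection} (unfolding the definitions, removing $|a|<2\sqrt{mn}$ by the finer-than-dyadic device, writing $a=bd+2$, and splitting dyadically in $r$ and $q$). Your exponent bookkeeping — $R\asymp X^{1-\theta}$, $Q\asymp X^{2\theta-1}$, so $X^{1-\eta}R^{-1}Q^{-2}\asymp X^{2-3\theta-\eta+O(\eta)}$, with the $r$-window of $\D(\theta)$ chosen so that $R^{1/2}<BX^{-\eta}$ — is precisely the point the paper leaves implicit, and it checks out.
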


A key parameter in Harman's sieve method is the \emph{width} of the Type II information, which is determined by the exponents in the range of $N$. In our case this is
\begin{align*}
\gamma(\theta):= (2-3\theta -\eta)- \theta/2=2-7\theta/2-\eta
\end{align*}
so we set $Z:=X^{\gamma(\theta)}$. Define
\begin{align*}
S(\A^-_q, z) := \sum_{\substack{X/q < n \leq 2 X/q \\ (n,P(z))=1}} |\DD^-_X(nq; \theta, \delta)|  \quad \quad \text{and} \quad \quad S(\B^-_q, z)  :=  \sum_{\substack{X/q < n \leq 2 X/q \\ (n,P(z))=1}} M^-_X(nq; \theta),
\end{align*} 
so that $S(\A^-, 2 \sqrt{X}) =  \sum_{X < p \leq 2X} |\DD^-_X(p;\theta)|,$ and $S(\B^-, 2 \sqrt{X})$ is the expected main term of this sum. Then Theorem \ref{lowerboundtheorem} follows once we show that $S(\A^-, 2 \sqrt{X}) \gg S(\B^-, 2 \sqrt{X}).$

Similarly as in Proposition \ref{funasympprop}, we first prove an asymptotic formula for almost primes. The argument is exactly same as before, the only difference is that we need to assume $\theta < 5/9$ to guarantee that $3/2-3\theta/2- \eta > 3\theta -1+\eta$, so that all ranges are covered by sums of either Type I or Type II.
\begin{prop} \label{funlowerprop} Suppose that $\theta \in [1/2,5/9).$ Let $Z:=X^{\gamma(\theta)}$ and $M \ll X^{3/2-3\theta/2-\eta}$. Then for any divisor bounded $\alpha(m)$ we have
\begin{align*}
\sum_{m \sim M} \alpha(m) S(\A^-_m, Z) = \sum_{m \sim M} \alpha(m) S(\B^-_m, Z) + O_C \bigg( \frac{X^{3/2-\theta}}{\log^C X} \bigg).
\end{align*}
 \end{prop}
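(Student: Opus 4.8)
The plan is to follow the proof of Proposition~\ref{funasympprop} essentially verbatim, now feeding in the Type I and Type II information of Propositions~\ref{typeilowerprop} and~\ref{typeiilowerprop} in place of Propositions~\ref{typeiprop} and~\ref{typeiiprop}. Writing $F_{\A^-}(n):=|D^-_X(n;\theta)|$ and $F_{\B^-}(n):=M^-_X(n;\theta)$, I would first expand the condition $(n,P(Z))=1$ by M\"obius to obtain, for $\CC\in\{\A,\B\}$,
\begin{align*}
\sum_{m\sim M}\alpha(m)S(\CC^-_m,Z)=\sum_{m\sim M}\alpha(m)\sum_{d\mid P(Z)}\mu(d)\sum_{dmn\sim X}F_{\CC^-}(dmn),
\end{align*}
and then split this sum according to whether $dm\le T$ or $dm>T$, where $T$ is any threshold with $X^{3\theta-1+\eta}\le T\le X^{3/2-3\theta/2-\eta}$; such a $T$ exists precisely because $\theta<5/9$ (for $\eta$ small enough), which is the hypothesis of the proposition.

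In the part $dm\le T$, after a finer-than-dyadic decomposition (Section~\ref{ftdsection}) separating $d$ and $m$ into dyadic ranges, grouping $\ell=dm$ gives a divisor-bounded coefficient supported on $\ell\ll T\ll X^{3/2-3\theta/2-\eta}$, so Proposition~\ref{typeilowerprop} applies and yields the expected main term with admissible error. In the part $dm>T$, I would write $d=p_1\cdots p_k$ with $p_1<\cdots<p_k<Z$ (squarefree since $d\mid P(Z)$); then $n\ll X/(dm)<X/T\le X^{2-3\theta-\eta}$, while $dn=X/m\gg X/M\gg X^{3\theta/2-1/2+\eta}\ge X^{\theta/2}$ using $\theta\ge 1/2$. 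Hence either $n\gg X^{\theta/2}$, in which case $X^{\theta/2}\ll n\ll X^{2-3\theta-\eta}$ and we have a Type II sum with free variable $n$; or $n<X^{\theta/2}$, and there is a least $\ell\ge 1$ with $p_1\cdots p_\ell n\ge X^{\theta/2}$, for which
\begin{align*}
X^{\theta/2}\le p_1\cdots p_\ell n< p_\ell\cdot p_1\cdots p_{\ell-1}n< Z\,X^{\theta/2}=X^{\gamma(\theta)+\theta/2}=X^{2-3\theta-\eta},
\end{align*}
so that, grouping $m\,p_{\ell+1}\cdots p_k$ as the smooth variable and $p_1\cdots p_\ell n$ as the other one, Proposition~\ref{typeiilowerprop} again applies. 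The point of the choice $Z=X^{\gamma(\theta)}$ is exactly that $ZX^{\theta/2}$ equals the top $X^{2-3\theta-\eta}$ of the Type II range. The cross-conditions $p_1<\cdots<p_k$ and $dm>T$ are removed as usual by finer-than-dyadic decomposition.

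It remains to check that the coefficient $\beta$ arising in these Type II applications — the restriction of the constant function to integers of the shape $p_1\cdots p_\ell n$, or to $n$ in the first subcase — satisfies the Siegel--Walfisz condition; this is handled exactly as in Proposition~\ref{funasympprop}: it is trivial when the free variable exceeds $\exp(\log^{1/2}X)$, it follows from the Siegel--Walfisz theorem when $p_\ell\gg\exp(\log^{1/2}X)$, and the remaining part, where $\beta$ is supported on $\exp(\log^{1/2}X)$-smooth numbers, is negligible by Lemma~\ref{smoothlemma}. Since the whole sum thereby decomposes into $O(\log^{O(1)}X)$ pieces of Type I or Type II shape, each contributing its $\B^-$-main term with error $O_C(X^{3/2-\theta}/\log^C X)$, summation gives the claimed identity upon taking $C$ large enough. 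I do not expect a genuinely new difficulty here: the only substantive point beyond the proof of Proposition~\ref{funasympprop} is the combinatorial bookkeeping ensuring that the Type I and Type II ranges together cover every size of the grouped variable, which is what forces the restriction $\theta<5/9$; no new arithmetic input is required.
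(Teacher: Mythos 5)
Your proposal is correct and is essentially the paper's own argument: the paper proves this proposition by repeating the proof of Proposition \ref{funasympprop} with Propositions \ref{typeilowerprop} and \ref{typeiilowerprop} in place of the earlier arithmetic input, noting only that $\theta<5/9$ is needed so that $3/2-3\theta/2-\eta>3\theta-1+\eta$ and the Type I and Type II ranges cover all sizes of $dm$. Your explicit choice of the threshold $T$ and the verification that $ZX^{\theta/2}=X^{2-3\theta-\eta}$ is exactly the bookkeeping the paper leaves implicit.
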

\emph{Proof of Theorem \ref{lowerboundtheorem}}. From here on we let $\theta \in [1/2,0.5388]$ and $\gamma=2-7\theta/2 -\eta \geq 0.1142-\eta$. By two applications of Buchstab's identity we have for $\CC \in \{\A^-,\B^-\}$ and $Z=X^\gamma$
\begin{align*}
S(\CC, 2 \sqrt{X})& = S(\CC,Z) - \sum_{Z \leq p < 2 \sqrt{X}} S(\CC_p,Z) + \sum_{\substack{Z \leq q < p < 2 \sqrt{X}} } S(\CC_{pq},q) 
\end{align*}
In the first two sums we have asymptotic formulas by Proposition \ref{funlowerprop}. In the third sum we have asymptotic formulas in the parts where $p,$ $q$, or $pq$ is in the Type II range $[X^{\theta/2},X^{\theta/2} Z] \cup [X^{1-\theta/2}Z^{-1},X^{1-\theta/2} ]$. Let
\begin{align*}
\U:=\{(u_1,u_2): \,Z \leq & u_2 < u_1 < 2 \sqrt{X},\, u_1 u_2^2 < 4X,  \\\
& u_1,u_2,u_1 u_2 \notin [X^{\theta/2},X^{\theta/2} Z] \cup [X^{1-\theta/2}Z^{-1},X^{1-\theta/2}]\}.
\end{align*}
Then
\begin{align*}
S(\A^-, 2 \sqrt{X})  &\geq  S(\A^-,Z) - \sum_{Z \leq p < 2 \sqrt{X}} S(\A^-_p,Z) + \sum_{\substack{Z \leq q < p < 2 \sqrt{X}\\ p q^2 < 4X \\ (p,q) \notin \U} } S(\A^-_{pq},q)  \\
&= S(\B^-,Z) - \sum_{Z \leq p < 2 \sqrt{X}} S(\B^-_p,Z) + \sum_{\substack{Z \leq q < p < 2 \sqrt{X}\\ p q^2 < 4X \\ (p,q) \notin \U} } S(\B^-_{pq},q) +  O_C \bigg( \frac{X^{3/2-\theta}}{\log^C X} \bigg)\\
&= (1+o(1)) S(\B^-, 2 \sqrt{X}) - \sum_{\substack{ (p,q) \in \U} } S(\B^-_{pq},q)-\sum_{\substack{Z \leq q < p < 2 \sqrt{X}\\ p q^2 \geq 4X }} S(\B^-_{pq},q) \\
& \geq (0.02-o(1)) S(\B^-, 2 \sqrt{X}),
\end{align*}
since by Lemma \ref{bilemma}
\begin{align*}
\sum_{\substack{ (p,q) \in \U} } S(\B^-_{pq},q)  \leq (c_0+o(1))S(\B, 2 \sqrt{X})
\end{align*}
for
\begin{align*}
c_0 = \int_{(X^{\alpha},X^\beta) \in \U} \omega \bigg(\frac{1-\alpha-\beta}{\beta} \bigg)\frac{d\alpha d\beta}{\alpha\beta^2} < 0.98.
\end{align*}
The code used to compute this can be found at \url{http://codepad.org/CePKg8aA}.
Note also that by the same argument as in (\ref{buchedgecasebound}) we have
\begin{align*}
\sum_{\substack{Z \leq q < p < 2 \sqrt{X}\\ p q^2 \geq 4X }} S(\B^-_{pq},q) \ll \frac{S(\B^-, 2 \sqrt{X})}{\log X},
\end{align*}
which is negligible.
\qed
\begin{remark} Here we could improve the exponent by iterating Buchstab's identity on some parts of the sum over $(p,q)\in \U$ to generate more Type II sums. However, since it is likely that the arithmetic information can be improved with available methods it is not worthwhile to spend a lot of effort optimizing the sieve argument at this stage.
\end{remark}
\section{Almost all moduli} \label{almostallsection}
In this section we explain how to modify the arguments in the previous sections to obtain Theorems \ref{almostallmodulitheorem} and \ref{almostall2theorem}. We simply have to note that the arithmetic information holds either for all moduli $d$ (Proposition \ref{prelitypeiprop}) or for all but a proportion $O(\log^{-C} X)$ of moduli $d$ with a suitable factorization property (Proposition \ref{prelitypeiiprop}, note that the restriction $(r,q)=1$ is easily removed since $q$ is prime and there are very few integers that are divisible by $q^2$). This means that the sieve arguments given in Section \ref{lowerboundsection} hold also individually for all but a proportion $O(\log^{-C} X)$ of the moduli $d \in  \D_\eta(0.2694,X^{0.2694})$, which is sufficient to prove  Theorem \ref{almostall2theorem}. To prove Theorem \ref{almostallmodulitheorem} it suffices to note that all but a proportion $O(\sqrt{\epsilon})$ of integers $d \sim X^{1/4+\epsilon}$ have the required factorization (cf. the argument used to reduce Theorem \ref{asymptotictheorem} to Theorem \ref{deltatheorem}), so that the sieve argument in  Section \ref{sieveasympsection} may be carried out for all but a proportion $O(\sqrt{\epsilon})$ of moduli. 

\bibliography{ellipticbibl}
\bibliographystyle{abbrv}
\end{document}